\def\thesection{\arabic{section}}
\def\theequation{\thesection.\arabic{equation}}
\newcommand{\ds} {\displaystyle}
\newcommand{\pa} {\partial}
\newcommand{\al} {\alpha}
\newcommand{\Om} {\Omega}
\newcommand{\ra} {\rightarrow}
\newcommand{\De} {\Delta}
\newcommand{\la} {\lambda}
\newcommand{\noi} {\noindent}
\newcommand{\mb} {\mathbb}
\newcommand{\mc} {\mathcal}
\markboth{\small } {\small Fractional Choquard equation}
\def\theequation{\@arabic{\c@section}.\@arabic{\c@equation}}
\def\QED{\hfill {$\square$}\goodbreak \medskip}
\newtheorem{Theorem}{Theorem}[section]
\newtheorem{Lemma}[Theorem]{Lemma}
\newtheorem{Proposition}[Theorem]{Proposition}
\newtheorem{Remark}[Theorem]{Remark}
\newtheorem{Definition}[Theorem]{Definition}
\begin{document}

{\vspace{0.01in}}

\title
{ \sc Fractional Choquard Equation with Critical Nonlinearities}

\author{
    ~~ T. Mukherjee\footnote{e-mail: tulimukh@gmail.com} ~ and K. Sreenadh \footnote{ sreenadh@gmail.com}\\
    Department of Mathematics\\ Indian Institute of Technology Delhi\\
Hauz Khaz, New Delhi-110016, India.
 }

\date{}

\maketitle

\begin{abstract}

\noi In this article, we study the Brezis-Nirenberg type problem of  nonlinear Choquard equation involving a fractional Laplacian
\[ (-\De)^s u = \left( \int_{\Om}\frac{|u|^{2^*_{\mu,s}}}{|x-y|^{\mu}}\mathrm{d}y \right)|u|^{2^*_{\mu,s}-2}u +\la u \; \text{in } \Om,\]
where $\Om $ is a bounded domain in $\mathbb R^n$ with Lipschitz  boundary, $\la $ is a real parameter, $s \in (0,1)$, $n >2s$ and $2^*_{\mu,s}= (2n-\mu)/(n-2s)$ is the critical exponent in the sense of Hardy-Littlewood-Sobolev inequality. We obtain some existence, multiplicity, regularity and nonexistence results for solution of the above equation using variational methods.
\medskip

\noi \textbf{Key words:}  Fractional Laplacian, Brezis-Nirenberg problem, Choquard equation, Critical exponent.

\medskip

\noi \textit{2010 Mathematics Subject Classification:} 35R11, 35R09, 35A15.

\end{abstract}

\section{Introduction}
In the present paper, we study the existence of solutions of the following doubly nonlocal fractional elliptic equation:
\begin{equation*}
(P_\la): (-\De)^su = \left( \int_{\Om}\frac{|u|^{2^*_{\mu,s}}}{|x-y|^{\mu}}\mathrm{d}y \right)|u|^{2^*_{\mu,s}-2}u +\la u \; \text{ in } \Om, \; \; u =0 \; \text{ in } \mathbb R^n\setminus \Om,
\end{equation*}
where $\Om$ is a bounded domain in $\mathbb R^n$ with Lipschitz boundary, $\la $ is a real parameter, $s\in (0,1)$, $n > 2s$, $2^*_{\mu,s}= (2n-\mu)/(n-2s)$ and $(-\De)^s$ is the fractional Laplace operator defined  as
$$ (-\De)^s u(x) = -\mathrm{P.V.}\int_{\mb R^n} \frac{u(x)-u(y)}{\vert x-y\vert^{n+2s}}\,\mathrm{d}y$$
({up to a normalizing constant}), where $\mathrm{P.V.}$ denotes the Cauchy principal value.
The fractional power of Laplacian is the infinitesimal generator of L$\acute{e}$vy stable diffusion process and arise in anomalous diffusion in plasma, population dynamics, geophysical fluid dynamics, flames propagation, chemical reactions in liquids and American options in finance. For more details, we refer to \cite{da,gl}. {Problems of the type $(P_\la)$  are inspired by the Hardy-Littlewood-Sobolev inequality:
\begin{equation}\label{new1}
\left(\int_{\mb R^n} \int_{\mb R^n}\frac{|u(x)|^{2^*_{\mu,s}}|u(y)|^{2^*_{\mu,s}}}{|x-y|^{\mu}}\mathrm{d}x\mathrm{d}y\right)^{\frac{1}{2^*_{\mu,s}}} \leq C ^{\frac{1}{2^*_{\mu,s}}}|u|^2_{2^*_s}, \; \text{for all} \; u\in H^s(\mathbb{R}^n).\end{equation}
where $C=C(n,\mu)$ is a positive constant and $2^{*}_{s}=\frac{2n}{n-2s}.$\\

\noi In the local case $s=1$, authors in \cite{BJS} studied the existence of of ground states for the nonlinear Choquard equation
\begin{equation}\label{cho1}
 -\De u + V(x)u = \left( \frac{1}{|x|^{\mu}} * |u|^p\right)|u|^{p-2}u \; \text{ in } \mb R^n,
 \end{equation}
where $p>1$ and $n\ge 3$. }
In the case when $p=2$  and $\mu=1$, S. Pekar \cite{Pekar} used this equation to describe the quantum theory of a polaron at rest and P. Choquard \cite{Lieb} adopted it as an approximation to Hartree-Fock theory of one component plasma. In \cite{moroz1}, authors considered the existence of ground states under the assumptions of Berestycki-Lions type. With conditions on the potential $V$, problems of type \eqref{cho1} are also  studied in \cite{ANM,Acker}.\\

\noi In \cite{lb}, Lieb considered the problem of the form
 \[ -\De u+ u = (|x|^{\mu} * F(u))f(u) \; \text{in}\; \mb R^n ,\]
 where  $f(t)$ is critical growth nonlinearity such that $|tf(t)| \leq C||t|^2 + |t|^{\frac{2n-\mu}{n-2s}}|$, for $t \in \mb R$, some constant $C>0$ and $F(t) = \int_0^{z}f(z) \mathrm{d}z$. Under some appropriate structure conditions on the nonlinearity $f$ author proved the existence and uniqueness (up to translations) of the ground state solutions. The existence of a sequence of  radially symmetric solutions was shown by Lions in \cite{Lions}. The nonlocal counterpart of this problem with fractional Laplacian has been studied in \cite{SGY}. A class of Schr\"{o}dinger equations with a generalized Choquard nonlinearity and fractional diffusion has been investigated in \cite{chosquas1}. Some existence, nonexistence and regularity results has been studied in \cite{chosquas2}. For more details, we  refer to \cite{wang,gv,clapp,AM1,AM2}.\\

\noi In the pioneering work of Brezis-Nirenberg \cite{breniren}, authors studied  the   critical exponent problem
\begin{equation*}
-\De u = |u|^{2^*-2}u + \la u \; \text{in} \; \Om, \; u = 0 \; \text{in} \; \partial \Om,
\end{equation*}
where $2^*=\frac{n+2}{n-2}$. They proved the existence of solutions for $\la>0, n>4$ by analysing the .local Palais-Smale sequences below the first critical level.
 In \cite{myang},   Gao and Yang established some existence results for the Brezis-Nirenberg type problem of the nonlinear Choquard equation
\begin{equation} \label{eqnew} -\De u = \left( \int_{\Om}\frac{|u|^{2^*_{\mu}}}{|x-y|^{\mu}}\mathrm{d}y \right)|u|^{2^*_{\mu}-2}u +\la u \; \text{in } \Om,\quad u=0\; \text{on}\; \pa\Om, \end{equation}
where $\Om$ is a bounded domain with LIpschitz boundary in $\mb R^n$ , $n \geq 3$, $\la$ is a parameter and $2^*_{\mu}= (2n-\mu)/(n-2)$. Here again, authors obtianed the existence results using mountain pass structure of the energy functional and and carefully analysing the local Palais-Smale sequences below the first critical level as in \cite{breniren}. \\

\noi Recently, many people studied the Brezis-Nirenberg type results for semilinear equations with fractional Laplacian, for details and recent works we  refer to \cite{BCSF,bn-serv,s3,bc,mb,mb1,ts1,ts2} and the references therein. In \cite{MK1,MK2}, the authors discuss recent developments in the description of anamolous diffusion via fractional dynamics and several fractional equations are obtained asymptotically from L\'{e}vy random walk models, extending Brownian walk models in a natural way. Particularly, in \cite{Laskin} a fractional Schr\"{o}dinger equation with local power type nonlinearity has been studied.\\

\noi In this paper, we  consider the nonlocal counterpart of the problem in \eqref{eqnew} namely $(P_\la)$. {Here, we study  the existence, multiplicity, regularity and nonexistence results  for  $(P_\la)$ in the spirit of \cite{myang}. We show several estimates while studying the compactness of  Palais-Smale sequences using the minimizers of the inequality in \eqref{new1}} and show the $L^\infty$ and $C^{0,\al}$ regularity for the solutions of $(P_\la).$ To the best of our knowledge, there is no paper considering the choquard equation with critical growth and fractional Laplacian. We aim at studying the existence and multiplicity of choquard equation with upper critical exponent $2^*_{\mu,s}= (2n-\mu)/(n-2s)$ on bounded domain in $\mb R^n, \; n>2s$ and answer completely to the question of existence, multiplicity and nonexistence of solutions. We are interested in the problem that how perturbation with a linear term along with double nonlocal terms affect the existence and multiplicity of the problem $(P_\la)$.  \\


\noi The paper is organized as follows: In section 2, we give the functional setting for the problem to use variational approach and state our main results. In section 3, we show that the weak limit of every bounded Palais-Smale sequence gives a weak solution for $(P_\la)$ by analyzing the Palais-Smale sequences below the critical level. In section 4, we give the proof of our first main theorem (when $n\geq 4s$) for the cases $\la \in (0,\la_1)$ and $\la \geq \la_1$ separately, where $\la_1$ is the first eigenvalue of $(-\De)^s$ with homogenous Dirichlet datum given in $\mb R^n \setminus \Om$. In section 5, we prove the existence result for $(P_\la)$ when $ 2s<n<4s $, that is we show that there exists $\bar{\la}>0$ such that for any $\la > \bar \la $, different from the eigenvalues of $(-\De)^s$,  $(P_\la)$ has a nontrivial solution. In section 6, we present the multiplicity results for $(P_\la)$. In section 7, we show some regularity result for weak solutions of $(P_\la)$. Finally, in section 8, we prove a non-existence result for $\la <0.$

\medskip

\section{Functional Setting and Main results}
\noi In \cite{bn-serv}, Servadei and  Valdinoci discussed the Dirichlet
boundary value problem for the fractional Laplacian using  variational methods.  Due to the nonlocalness of the fractional
Laplacian, they introduced the function space $(X_0,\|.\|_{X_0})$.
The space $X$ is defined as
\[X= \left\{u|\;u:\mb R^n \ra\mb R \;\text{is measurable},\;
u|_{\Om} \in L^2(\Om)\;
 \text{and}\;  \frac{(u(x)- u(y))}{ |x-y|^{\frac{n}{2}+s}}\in
L^2(Q)\right\},\]
\noi where $Q=\mb R^{2n}\setminus(\mc C\Om\times \mc C\Om)$ and
 $\mc C\Om := \mb R^n\setminus\Om$. The space X is endowed with the norm
 \[\|u\|_X = \|u\|_{L^2(\Om)} + \left[u\right]_X,\]
 where
 \[\left[u\right]_X= \left( \int_{Q}\frac{|u(x)-u(y)|^{2}}{|x-y|^{n+2s}}\,\mathrm{d}x\mathrm{d}y\right)^{\frac12}.\]
 Then we define $ X_0 = \{u\in X : u = 0 \;\text{a.e. in}\; \mb R^n\setminus \Om\}$. Also we have the Poincare type inequality:  there exists a constant $C>0$ such that $\|u\|_{L^{2}(\Om)} \le C [u]_X$, for all $u\in X_0$. Hence,  $\|u\|=[u]_X$ is a norm on $(X_0, \|.\|)$. Moreover, $X_0$ is a Hilbert space and $C_c^{\infty}(\Om)$ is dense in $X_0$.
   Note that the norm $\|.\|$ involves the interaction between $\Om$ and $\mb R^n\backslash\Om$.
 We denote $\|.\|=[.]_X$ for the norm in $X_0$.
 From the embedding results, we know that $X_0$ is continuously and compactly embedded in $L^r(\Om)$  when $1\leq r < 2^*_s$, where $2^*_s = 2n/(n-2s)$ and the embedding is continuous but not compact if $r= 2^*_s$. We define
 \begin{equation*}
 {S_s} = \inf_{u \in X_0\setminus \{0\}} \frac{\int_Q \frac{|u(x)-u(y)|^2}{|x-y|^{n+2s}}{\,\mathrm{d}x\mathrm{d}y}}{\left(\int_{\Om}|u|^{2^*_s}\,\mathrm{d}x\right)^{2/2^*_s}} .
 \end{equation*}
 The key point to apply variational approach for the problem $(P_\la)$ is the following well-known Hardy-Littlewood-Sobolev inequality.
 \begin{Proposition}\cite{Lieb}
 Let $t,r>1$ and $0<\mu<n $ with $1/t+\mu/n+1/r=2$, $f \in L^t(\mathbb R^n)$ and $h \in L^r(\mathbb R^n)$. There exists a sharp constant $C(t,n,\mu,r)$, independent of $f,h$ such that
 \begin{equation*}
 \int_{\mb R^n}\int_{\mb R^n} \frac{f(x)h(y)}{|x-y|^{\mu}}\mathrm{d}x\mathrm{d}y \leq C(t,n,\mu,r)|f|_t|h|_r.
 \end{equation*}
 \end{Proposition}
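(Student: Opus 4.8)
\noi The statement is the classical Hardy--Littlewood--Sobolev inequality, and I would argue it in two stages: first produce \emph{some} finite constant for which the inequality holds, and then note that the \emph{sharp} (optimal) constant $C(t,n,\mu,r)$ is simply the infimum of all admissible constants, which is finite and positive by the first stage. So the real work is to bound the bilinear form $B(f,h)=\int_{\mb R^n}\int_{\mb R^n}f(x)|x-y|^{-\mu}h(y)\,\mathrm dx\,\mathrm dy$ by $C|f|_t|h|_r$. A preliminary scaling check $f\mapsto f(\la\,\cdot)$, $h\mapsto h(\la\,\cdot)$ shows that no exponent relation other than $1/t+\mu/n+1/r=2$ can possibly work, and that under it both sides scale identically; together with the obvious translation and reflection invariance, this already tells us the optimal constant depends only on $t,n,\mu,r$, as claimed.

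\noi For the inequality itself I would first reduce to nonnegative $f,h$ (replacing them by $|f|,|h|$ only increases $B$) and then, via the Riesz rearrangement inequality, to \emph{symmetric decreasing} $f,h$: since the kernel $|x-y|^{-\mu}$ is itself symmetric decreasing in $x-y$, one has $B(f,h)\le B(f^{\ast},h^{\ast})$ while the $L^t$ and $L^r$ norms are rearrangement invariant. Writing $f=\int_0^\infty \mathbf 1_{\{f>a\}}\,\mathrm da$ and similarly for $h$, each level set is a ball, so everything reduces to the elementary quantity $I(\rho,\sigma):=\int_{B_\rho}\int_{B_\sigma}|x-y|^{-\mu}\,\mathrm dx\,\mathrm dy$. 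A direct computation (splitting according to whether $|x-y|$ is comparable to the larger or the smaller radius) gives $I(\rho,\sigma)\lesssim \min(\rho,\sigma)^{\,n}\,\max(\rho,\sigma)^{\,n-\mu}$. Feeding this back into the double integral over the levels of $f$ and $h$, expressing $|f|_t$ and $|h|_r$ through the same distribution functions, and carrying out the resulting one–dimensional integrals, one checks that the bookkeeping closes precisely when $1/t+\mu/n+1/r=2$ and yields a finite bound. An equivalent route is to dualize, write $B(f,h)=\int h\,(|\cdot|^{-\mu}\ast f)$, prove the weak-type estimate for the Riesz potential $f\mapsto|\cdot|^{-\mu}\ast f$ by splitting the kernel at a radius $R$ and optimizing in $R$, and then upgrade to the strong-type bound by the Marcinkiewicz interpolation theorem; the endpoint exponents are genuinely excluded, which is exactly why a bare Young/H\"older estimate does not suffice and an interpolation or level-set argument is unavoidable.

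\noi The step I expect to be fiddly rather than deep is the double layer-cake bookkeeping (or, in the dual approach, the interpolation at the right pair of exponents); neither is conceptually hard. The genuinely hard part — which is \emph{not} needed for the stated existence assertion but is the content of Lieb's paper — is to \emph{identify} the sharp constant and its extremals in the conformal case $t=r=2n/(2n-\mu)$, which is precisely the case of \eqref{new1} relevant to $(P_\la)$ (there $f=h=|u|^{2^\ast_{\mu,s}}$, since $\tfrac1t+\tfrac\mu n+\tfrac1r=\tfrac{2n-\mu}{2n}+\tfrac\mu n+\tfrac{2n-\mu}{2n}=2$). In that regime the argument combines the rearrangement reduction above with a normalization/concentration-compactness argument to produce a maximizer, and then the method of competing symmetries: alternately applying spherical symmetrization and a conformal inversion drives the maximizing sequence to the standard bubble $(1+|x|^2)^{-(2n-\mu)/2}$, pinning down $C$ explicitly by a beta-function evaluation. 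For the present paper only the existence of a finite sharp constant is invoked, so the layer-cake argument of the previous paragraph is all that is required.
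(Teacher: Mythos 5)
The paper does not actually prove this proposition; it is a classical result quoted directly from Lieb and Loss, \emph{Analysis} (the reference \cite{Lieb}), so there is no in-paper proof to compare against. What the paper really invokes from this circle of ideas is (a) the bare existence of a finite constant, needed to define $S^H_s$ and to check that the energy functional is well defined and $C^1$ on $X_0$, and (b) separately, the explicit extremal profile in the conformal diagonal case $t=r=2n/(2n-\mu)$, which it cites from Theorem~2.15 of \cite{chosquas2} in order to build the family $\tilde{U}_\epsilon$ and derive \eqref{relation}.

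Your sketch is a correct and standard proof of (a), and your framing of (b) as a logically separate, harder result (Lieb's actual theorem) is exactly right. Concretely: the scaling argument correctly forces $1/t+\mu/n+1/r=2$; reduction to symmetric-decreasing $f,h$ via Riesz rearrangement is valid because the kernel $|x-y|^{-\mu}$ is already symmetric decreasing and $L^t, L^r$ norms are rearrangement invariant; the ball--ball estimate $I(\rho,\sigma)\lesssim\min(\rho,\sigma)^{\,n}\max(\rho,\sigma)^{\,n-\mu}$ is correct (bound the inner integral by $\int_{|z|\le 2\max(\rho,\sigma)}|z|^{-\mu}\,\mathrm{d}z$ and integrate over the smaller ball); and the dual route through the weak-type bound for the Riesz potential plus Marcinkiewicz interpolation is the other textbook proof. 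The only substantive caveat is that you defer the ``fiddly'' layer-cake (or interpolation) bookkeeping; that is precisely where the strict inequalities $t,r>1$ and the exclusion of the endpoints are used, so a full write-up must carry it through — but your identification of where the exponent relation enters, and of what is and is not needed for the paper's purposes, is sound.
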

In general, let $f = h= |u|^q$ then by Hardy-Littlewood-Sobolev inequality we get,
\[ \int_{\mb R^n}\int_{\mb R^n} \frac{|u(x)|^q|u(y)|^q}{|x-y|^{\mu}}\mathrm{d}x\mathrm{d}y\]
 is well defined if $|u|^q \in L^t(\mb R^n)$ for some $t>1$ satisfying
 \[\frac{2}{t}+ \frac{\mu}{n}=2.\]
 Thus, for $u \in H^s(\mb R^n)$, by Sobolev Embedding theorems, we must have
 \[\frac{2n-\mu}{n}\leq q \leq \frac{2n-\mu}{n-2s}.\]
 From this, for $u \in X_0$ we have
 \[\left(\int_{\mb R^n} \int_{\mb R^n}\frac{|u(x)|^{2^*_{\mu,s}}|u(y)|^{2^*_{\mu,s}}}{|x-y|^{\mu}}\mathrm{d}x\mathrm{d}y\right)^{\frac{1}{2^*_{\mu,s}}} \leq C(n,\mu)^{\frac{1}{2^*_{\mu,s}}}|u|^2_{2^*_s},\]
 where $C(n,\mu)$ is a suitable constant. We define
 \begin{equation*}
 S^H_s := \inf\limits_{H^s(\mb R^n)\setminus \{0\}} \frac{\int_{\mb R^n}\int_{\mb R^n} \frac{|u(x)-u(y)|^2}{|x-y|^{n+2s}}{\,\mathrm{d}x\mathrm{d}y}}{\left(\int_{\mb R^n} \int_{\mb R^n}\frac{|u(x)|^{2^*_{\mu,s}}|u(y)|^{2^*_{\mu,s}}}{|x-y|^{\mu}}\mathrm{d}x\mathrm{d}y\right)^{\frac{1}{2^*_{\mu,s}}}}
 \end{equation*}
as the best constant which is achieved if and only if $u$ is of the form
\[C\left( \frac{t}{t^2+|x-x_0|^2}\right)^{\frac{n-2s}{2}}, \; \; x \in \mb R^n,\]
for some $x_0 \in \mb R^n$, $C>0$ and $t>0$ (refer Theorem $2.15$ of \cite{chosquas2}). It is well-known that this characterization of $u$ provides the minimizer for $S_s$.  Also,it satisfies
\begin{equation}\label{critical-cho}
(-\De)^s u = \left( \int_{\mb R^n}\frac{|u|^{2^*_{\mu,s}}}{|x-y|^{\mu}}\mathrm{d}y \right)|u|^{2^*_{\mu,s}-2}u \; \; \text{in}\;\; \mb R^n.
\end{equation}
 Moreover,
\begin{equation}\label{relation}
S^H_s = \frac{S_s}{C(n,\mu)^{\frac{1}{2^*_{\mu,s}}}}.
\end{equation}
\noi Consider the family of functions $\{U_{\epsilon}\}$ defined as
\[ U_{\epsilon}(x) = \epsilon^{-\frac{(n-2s)}{2}}\; u^*\left(\frac{x}{\epsilon}\right),\; x \in \mb R^n, \]
where $u^*(x) = \bar{u}\left(\frac{x}{S_s^{1/(2s)}}\right),\; \bar{u}(x) = \frac{\tilde{u}(x)}{\vert u \vert_{2^*_s}}$ and $\tilde{u}(x)= \alpha(\beta^2 + |x|^2)^{-\frac{n-2s}{2}}$ with $\alpha \in \mb R \setminus \{0\}$ and $ \beta >0$ are fixed constants. Then for each $\epsilon > 0$, $U_\epsilon$ satisfies
\[ (-\De)^su = |u|^{2^*_s-2}u \; \;\text{in} \; \mb R^n \]
and verifies the equality
\begin{equation*}
{\int_{\mb R^n} \int_{\mb R^n} \frac{|U_\epsilon(x)-U_{\epsilon}(y)|^2}{|x-y|^{n+2s}}\,\mathrm{d}x\mathrm{d}y =\int_{\mb R^n} |U_\epsilon|^{2^*_s} \,\mathrm{d}x}= {S_s^{\frac{n}{2s}}}.
\end{equation*}
(For a proof, we refer to \cite{bn-serv}.) Then
\[\tilde{U}_\epsilon (x)= S_s^{\frac{(n-\mu)(2s-n)}{4(n+2s-\mu)}}C(n,\mu)^{\frac{2s-n}{2(n+2s-\mu)}}U_\epsilon(x)\]
gives a family of minimizer for $S^H_s$ and satisfies \eqref{critical-cho} and
\[\int_{\mb R^n} \int_{\mb R^n} \frac{|\tilde{U}_\epsilon(x)-\tilde{U}_{\epsilon}(y)|^2}{|x-y|^{n+2s}}\,\mathrm{d}x\mathrm{d}y= \int_{\mb R^n}\int_{\mb R^n} \frac{|\tilde{U}_\epsilon(x)|^{2^*_{\mu,s}}|\tilde{U}_\epsilon(y)|^{2^*_{\mu,s}}}{|x-y|^{\mu}}~\mathrm{d}x\mathrm{d}y = (S^H_s)^{\frac{2n-\mu}{n+2s-\mu}}.\]
Next lemma gives a property about $S^H_s$ which is known to be true for $S_s$.

\begin{Lemma}
Let $n>2s$ and we define
\begin{equation*}
 S^H_s(\Om) := \inf\limits_{X_0\setminus\{0\}} \frac{\int_{Q} \frac{|u(x)-u(y)|^2}{|x-y|^{n+2s}}{\,\mathrm{d}x\mathrm{d}y}}{\left(\int_{\Om} \int_{\Om}\frac{|u(x)|^{2^*_{\mu,s}}|u(y)|^{2^*_{\mu,s}}}{|x-y|^{\mu}}\mathrm{d}x\mathrm{d}y\right)^{\frac{1}{2^*_{\mu,s}}}}.
 \end{equation*}
 Then $S^H_s(\Om)= S^H_s$ and $S^H_s(\Om)$ is never achieved except $\Om = \mb R^n$.
\end{Lemma}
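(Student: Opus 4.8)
The plan is to prove the two inequalities $S^H_s(\Om)\ge S^H_s$ and $S^H_s(\Om)\le S^H_s$ separately and then deduce non-attainment from the rigidity of the extremals of $S^H_s$ recalled above. For the lower bound, note that every $u\in X_0\setminus\{0\}$ vanishes a.e.\ outside $\Om$, hence lies in $H^s(\mb R^n)$; since $u\equiv0$ on $\mc C\Om$, the piece $\mc C\Om\times\mc C\Om$ contributes nothing to the Gagliardo seminorm, so
\[\int_Q\frac{|u(x)-u(y)|^2}{|x-y|^{n+2s}}\,\mathrm dx\,\mathrm dy=\int_{\mb R^n}\!\int_{\mb R^n}\frac{|u(x)-u(y)|^2}{|x-y|^{n+2s}}\,\mathrm dx\,\mathrm dy,\]
and likewise the double Riesz integral over $\Om\times\Om$ coincides with the one over $\mb R^n\times\mb R^n$ because $u=0$ outside $\Om$. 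Thus the quotient defining $S^H_s(\Om)$ at $u$ equals the quotient defining $S^H_s$ at the same function, and taking the infimum over $X_0\setminus\{0\}\subset H^s(\mb R^n)\setminus\{0\}$ gives $S^H_s(\Om)\ge S^H_s$.

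For the reverse inequality, fix $x_0\in\Om$ and $r>0$ with $B_r(x_0)\subset\Om$, pick $\eta\in C_c^\infty(B_r(x_0))$ with $0\le\eta\le1$ and $\eta\equiv1$ on $B_{r/2}(x_0)$, and set $u_\epsilon:=\eta\,\tilde U_\epsilon\in X_0$, where $\tilde U_\epsilon$ is the family of extremals of $S^H_s$ recalled above, translated to be centered at $x_0$. Using the explicit polynomial decay of $\tilde U_\epsilon$ and its scaling, standard truncation estimates of Brezis--Nirenberg type give, as $\epsilon\to0^+$,
\[\|u_\epsilon\|^2=(S^H_s)^{\frac{2n-\mu}{n+2s-\mu}}+o(1),\qquad \int_\Om\!\int_\Om\frac{|u_\epsilon(x)|^{2^*_{\mu,s}}|u_\epsilon(y)|^{2^*_{\mu,s}}}{|x-y|^\mu}\,\mathrm dx\,\mathrm dy=(S^H_s)^{\frac{2n-\mu}{n+2s-\mu}}+o(1).\]
Inserting $u_\epsilon$ into the quotient for $S^H_s(\Om)$ and using the identity $\frac{2n-\mu}{n+2s-\mu}\big(1-\tfrac1{2^*_{\mu,s}}\big)=1$, we let $\epsilon\to0^+$ to obtain $S^H_s(\Om)\le S^H_s$; with the first step this yields $S^H_s(\Om)=S^H_s$.

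Finally, suppose some $u\in X_0\setminus\{0\}$ attains $S^H_s(\Om)$. By the first step and the equality just proved, the zero extension of $u$ attains $S^H_s$ over $H^s(\mb R^n)$, so by the characterization of extremals recalled before the statement (Theorem~2.15 of \cite{chosquas2}) $u$ equals, up to a constant, $C\big(t/(t^2+|x-x_1|^2)\big)^{(n-2s)/2}$ for some $x_1\in\mb R^n$, $t>0$. Such a function is strictly positive on all of $\mb R^n$, which contradicts $u=0$ a.e.\ in $\mb R^n\setminus\Om$ unless $\Om=\mb R^n$.

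The main obstacle is the pair of asymptotics in the second step: one must control how the cut-off $\eta$ affects both the fractional Gagliardo seminorm and --- the genuinely delicate point --- the nonlocal double Riesz-potential (Choquard) integral of $\tilde U_\epsilon$. These are the fractional-Choquard analogues of the classical Brezis--Nirenberg truncation estimates; they are somewhat technical but follow from the explicit decay of $\tilde U_\epsilon$ together with the Hardy--Littlewood--Sobolev inequality.
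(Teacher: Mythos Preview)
Your argument is correct, and your treatment of the lower bound $S^H_s(\Om)\ge S^H_s$ and of non-attainment is essentially identical to the paper's. The upper bound, however, is obtained in the paper by a much lighter device than your truncation estimates. The paper takes an arbitrary minimizing sequence $\{u_k\}\subset C_c^\infty(\mb R^n)$ for $S^H_s$ and rescales/translates it via $v_k(x)=\tau_k^{(n-2s)/2}u_k(\tau_k x+\theta_k)$ so that $\mathrm{supp}\,v_k\subset\Om$; the point is that both the Gagliardo seminorm and the double Riesz (Choquard) integral are \emph{exactly} invariant under this scaling, so the Rayleigh quotient of $v_k$ equals that of $u_k$ and one gets $S^H_s(\Om)\le S^H_s$ with no error terms at all.

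What your route buys is that the cut-off estimates you invoke (the fractional--Choquard Brezis--Nirenberg asymptotics for $\|u_\epsilon\|^2$ and for the Choquard integral of $u_\epsilon$) are precisely those developed later in the paper (Propositions~\ref{estimates} and~\ref{estimates1}), so you are in effect front-loading work that will be needed anyway. What the paper's route buys is a self-contained two-line proof of the upper bound that requires nothing beyond the scale-invariance of the two functionals; in particular, for this lemma one never needs to know the explicit form of the extremals or to control how a cut-off interacts with the nonlocal terms.
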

\begin{proof}
Clearly $S^H_s\leq S^H_s(\Om)$. Let $\{u_k\} \subset C^{\infty}_{c}(\mb R^n)$ be a minimizing sequence for $S^H_s$. We choose $\tau_k \in \mb R^n$ and $\theta_k>0$ such that
\[v_k(x) := \tau_k^{\frac{n-2s}{2}} u_k(\tau_k x + \theta_k ) \in C_c^{\infty}(\Om)\]
which satisfies
\[\int_{\mb R^n} \int_{\mb R^n} \frac{|v_k(x)-v_k(y)|^2}{|x-y|^{n+2s}}~\mathrm{d}x\mathrm{d}y =\int_{\mb R^n} \int_{\mb R^n} \frac{|u_k(x)-u_k(y)|^2}{|x-y|^{n+2s}}~\mathrm{d}x\mathrm{d}y \]
and
\[\int_{\Om}\int_{\Om} \frac{|v_k(x)|^{2^*_{\mu,s}}|v_k(y)|^{2^*_{\mu,s}}}{|x-y|^{\mu}}~\mathrm{d}x\mathrm{d}y= \int_{\mb R^n}\int_{\mb R^n} \frac{|u_k(x)|^{2^*_{\mu,s}}|u_k(y)|^{2^*_{\mu,s}}}{|x-y|^{\mu}}~\mathrm{d}x\mathrm{d}y. \]
By definition,
\[S^H_s(\Om) \leq \frac{\int_Q \frac{|v_k(x)-v_k(y)|^2}{|x-y|^{n+2s}}~\mathrm{d}x\mathrm{d}y}{\int_{\Om}\int_{\Om} \frac{|v_k(x)|^{2^*_{\mu,s}}|v_k(y)|^{2^*_{\mu,s}}}{|x-y|^{\mu}}~\mathrm{d}x\mathrm{d}y} \]
which implies $S^H_s(\Om) \leq S^H_s $. Thus, $S^H_s(\Om)$  is never achieved except when $\Om = \mb R^n$ because $\{\tilde{U}_\epsilon\}$ are the only family of minimizers for which the equality holds in Hardy-Littlewood-Sobolev inequality and the best constant is achieved. \hfill{\QED}
\end{proof}

\begin{Definition}
We say that $u \in X_0$ is a weak solution of $(P_\la)$ if
\begin{equation*}
\begin{split}
&\int_{Q}\frac{(u(x)-u(y))(\varphi(x)-\varphi(y))}{|x-y|^{n+2s}}~\mathrm{d}x\mathrm{d}y\\
&\quad = \int_{\Om}\int_{\Om}\frac{|u(x)|^{2^*_{\mu,s}}|u(y)|^{2^*_{\mu,s}-2}u(y)\varphi(y)}{|x-y|^{\mu}}~\mathrm{d}x\mathrm{d}y+ \la \int_{\Om}u\varphi ~dx,
\end{split}
\end{equation*}
for every $\varphi \in C_c^{\infty}(\Om)$.
\end{Definition}
The corresponding energy functional associated to the problem $(P_\la)$ is given by
\[I_{\la}(u)= I(u) = \frac{\|u\|^2}{2} - \frac{1}{22^*_{\mu,s}}\int_{\Om}\int_{\Om}\frac{|u(x)|^{2^*_{\mu,s}}|u(y)|^{2^*_{\mu,s}}}{|x-y|^{\mu}}~\mathrm{d}x\mathrm{d}y
-\frac{\la}{2}\int_{\Om}|u|^2\mathrm{d}x.\]
Using Hardy-Littlewood-Sobolev inequality, we can show that $I \in C^1(X_0,\mb R)$ and
\begin{equation*}
\begin{split}
\langle I^{\prime}(u), \varphi \rangle &= \int_{Q}\frac{(u(x)-u(y))(\varphi(x)-\varphi(y))}{|x-y|^{n+2s}}~\mathrm{d}x\mathrm{d}y\\
&\quad - \int_{\Om}\int_{\Om}\frac{|u(x)|^{2^*_{\mu,s}}|u(y)|^{2^*_{\mu,s}-2}u(y)\varphi(y)}{|x-y|^{\mu}}~\mathrm{d}x\mathrm{d}y- \la \int_{\Om}u\varphi ~dx ,
\end{split}
\end{equation*}
for every $\varphi \in C_c^{\infty}(\Om)$. Thus, $u$ is a weak solution of $(P_\la)$ if and only if $u$ is a critical point of functional $I$. We now state the main results of this paper.

\begin{Theorem}\label{thrm1}
Let $\la_{1}$ denote the first eigenvalue of $(-\De)^s$ with homogenous Dirichlet boundary condition in $\mb R^n \setminus \Om$. Then, for any $\la \in (0,\la_1)$, if $n \geq 4s$ for $s \in (0,1)$, $(P_\la)$ has a nontrivial solution.
\end{Theorem}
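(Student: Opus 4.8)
The plan is to produce the solution as a mountain-pass critical point of the energy functional $I_\lambda$, by combining the mountain-pass theorem with the compactness analysis announced for Section~3 and with Brezis--Nirenberg-type estimates for the truncated extremals $\tilde{U}_\epsilon$.

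\emph{Step 1: mountain-pass geometry.} Since $\lambda \in (0,\lambda_1)$, the quadratic form $u \mapsto \|u\|^2 - \lambda \int_\Omega |u|^2\,\mathrm{d}x$ is positive definite and equivalent to $\|u\|^2$ on $X_0$. Combining this with the Hardy--Littlewood--Sobolev inequality stated above and the fact that $2\cdot 2^*_{\mu,s} > 2$, one gets $I_\lambda(u) \ge \alpha > 0$ for $\|u\| = \rho$ small, while $I_\lambda(t u_0) \to -\infty$ as $t \to +\infty$ for any fixed $u_0 \in X_0 \setminus \{0\}$. Define the mountain-pass level
\[ c_\lambda = \inf_{\gamma \in \Gamma}\ \max_{t\in[0,1]} I_\lambda(\gamma(t)), \qquad \Gamma = \{\gamma \in C([0,1],X_0) : \gamma(0) = 0,\ I_\lambda(\gamma(1)) < 0\}, \]
so that $c_\lambda \ge \alpha > 0$ because every such path must cross the sphere $\|u\| = \rho$.

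\emph{Step 2: reduction to an energy estimate.} A standard computation shows every Palais--Smale sequence of $I_\lambda$ is bounded (using again $\lambda < \lambda_1$), and by the analysis of Section~3 any bounded Palais--Smale sequence at a level $c$ with $0 < c < c^*$, where
\[ c^* := \frac{n+2s-\mu}{2(2n-\mu)}\, (S^H_s)^{\frac{2n-\mu}{n+2s-\mu}}, \]
has a subsequence whose weak limit is a nontrivial weak solution of $(P_\lambda)$. Therefore it suffices to prove $0 < c_\lambda < c^*$.

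\emph{Step 3: the test-function estimate.} Fix $\psi \in C_c^\infty(\Omega)$ with $0 \le \psi \le 1$ and $\psi \equiv 1$ on a neighbourhood of some point of $\Omega$, and put $u_\epsilon = \psi\, \tilde{U}_\epsilon$. Then $c_\lambda \le \max_{t\ge 0} I_\lambda(t u_\epsilon)$, so it is enough to estimate the latter. Using the known asymptotics for $\tilde{U}_\epsilon$ (equivalently $U_\epsilon$, cf.\ \cite{bn-serv,myang}),
\[ \|u_\epsilon\|^2 \le (S^H_s)^{\frac{2n-\mu}{n+2s-\mu}} + O(\epsilon^{n-2s}), \qquad \int_\Omega\!\int_\Omega \frac{|u_\epsilon(x)|^{2^*_{\mu,s}}|u_\epsilon(y)|^{2^*_{\mu,s}}}{|x-y|^{\mu}}\,\mathrm{d}x\mathrm{d}y \ge (S^H_s)^{\frac{2n-\mu}{n+2s-\mu}} - O(\epsilon^{\beta}) \]
for some $\beta > 2s$, and
\[ \int_\Omega |u_\epsilon|^2\,\mathrm{d}x \ge \begin{cases} C\,\epsilon^{2s} + o(\epsilon^{2s}), & n > 4s,\\[4pt] C\,\epsilon^{2s}|\log\epsilon| + o(\epsilon^{2s}|\log\epsilon|), & n = 4s, \end{cases} \]
with $C > 0$, one maximizes the scalar function $t \mapsto \tfrac{t^2}{2}a_\epsilon - \tfrac{t^{2\cdot 2^*_{\mu,s}}}{2\cdot 2^*_{\mu,s}} b_\epsilon$, with $a_\epsilon = \|u_\epsilon\|^2 - \lambda\int_\Omega |u_\epsilon|^2$ and $b_\epsilon$ the Choquard double integral, explicitly and expands, obtaining
\[ \max_{t\ge 0} I_\lambda(t u_\epsilon) \le c^* + O(\epsilon^{n-2s}) - C'\lambda \int_\Omega |u_\epsilon|^2\,\mathrm{d}x. \]
Since $\lambda > 0$ and, for $n \ge 4s$, the negative term is of order $\epsilon^{2s}$ (or $\epsilon^{2s}|\log\epsilon|$), which dominates the error $\epsilon^{n-2s}$ as $\epsilon \to 0^+$, we conclude $c_\lambda < c^*$ for $\epsilon$ small. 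Applying the mountain-pass theorem then yields a Palais--Smale sequence at level $c_\lambda \in (0,c^*)$, hence a nontrivial weak solution of $(P_\lambda)$ by Step~2.

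\emph{Main obstacle.} The delicate point is Step~3, specifically the two lower bounds: controlling the nonlocal Choquard energy of $u_\epsilon$ with an error strictly better than $\epsilon^{2s}$, and pinning down the sharp behaviour of $\int_\Omega |u_\epsilon|^2$ in the two regimes $n > 4s$ and $n = 4s$. These require splitting all integrals into the region where $\psi \equiv 1$ and the transition annulus, and using the explicit decay $\tilde{U}_\epsilon(x) \approx \epsilon^{(n-2s)/2}|x|^{-(n-2s)}$ for $|x|$ away from $0$. The hypothesis $n \ge 4s$ is used precisely to ensure that the gain coming from the linear term $\lambda\int_\Omega |u_\epsilon|^2$ beats the $O(\epsilon^{n-2s})$ loss produced by truncating the extremal.
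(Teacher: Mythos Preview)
Your proposal is correct and follows essentially the same route as the paper: mountain-pass geometry (the paper's Lemma~\ref{MGP}), compactness below the threshold $\frac{n+2s-\mu}{2(2n-\mu)}(S^H_s)^{\frac{2n-\mu}{n+2s-\mu}}$ (Lemma~\ref{PSlevel}), and the Brezis--Nirenberg energy estimate with truncated extremals (Propositions~\ref{estimates} and~\ref{estimates1}). The only cosmetic difference is that the paper works with $u_\epsilon = \eta U_\epsilon$ built from the $S_s$-minimizers and tracks the constant $C(n,\mu)$ explicitly, whereas you use the rescaled $\tilde{U}_\epsilon$; and the paper phrases the key estimate as the quotient inequality $\dfrac{\|u_\epsilon\|^2 - \lambda\int_\Omega |u_\epsilon|^2}{\bigl(\text{Choquard}\bigr)^{(n-2s)/(2n-\mu)}} < S^H_s$ rather than expanding $\max_{t\ge 0} I_\lambda(tu_\epsilon)$ directly, but these are equivalent.
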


\begin{Theorem}\label{newthrm}
Let $s \in (0,1)$ and $ 2s<n<4s $, then there exist $\bar{\la}>0$ such that for any $\la > \bar \la $ different from the eigenvalues of $(-\De)^s$ with homogenous Dirichlet boundary condition in $\mb R^n \setminus \Om$, $(P_\la)$ has a nontrivial solution.
\end{Theorem}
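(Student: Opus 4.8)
The plan is to realize a nontrivial solution of $(P_\la)$ as a critical point of $I_\la$ produced by a linking argument, and to push the associated minimax level below the first critical threshold by exploiting the linear term with $\la$ large. Since $\la$ is assumed different from every eigenvalue of $(-\De)^s$, there is an integer $k\ge 0$ with $\la_k<\la<\la_{k+1}$, where $0=:\la_0<\la_1\le\la_2\le\cdots$ are the eigenvalues in $\Om$ with exterior Dirichlet condition and $\{e_j\}_{j\ge1}\subset X_0$ an associated basis, orthonormal in $L^2(\Om)$ and orthogonal in $X_0$, with $\|e_j\|^2=\la_j$ (recall eigenfunctions are orthogonal for both inner products). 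Put $\mb P=\mathrm{span}\{e_1,\dots,e_k\}$, with $\mb P=\{0\}$ when $k=0$, and let $\mb P^{\perp}$ be its $X_0$-orthogonal complement, so that $\|v\|^2-\la|v|_{2}^2<0$ on $\mb P\setminus\{0\}$ while $\|w\|^2-\la|w|_{2}^2\ge(1-\la/\la_{k+1})\|w\|^2>0$ on $\mb P^{\perp}$. Fix $x_0\in\Om$, a cut-off $\eta\in C_c^{\infty}(\Om)$ with $\eta\equiv1$ near $x_0$, and set $u_\epsilon=\eta\,\tilde U_\epsilon(\cdot-x_0)$, the truncation of the minimizer of $S^H_s$; subtracting its $\mb P$-component (which perturbs the estimates below only at higher order) we may assume $u_\epsilon\in\mb P^{\perp}$.

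First I would verify the linking geometry of $I_\la$ relative to $X_0=\mb P\oplus\mb P^{\perp}$ with distinguished direction $u_\epsilon$. By the Hardy--Littlewood--Sobolev inequality and the positivity of $\|w\|^2-\la|w|_{2}^2$ on $\mb P^{\perp}$, there are $r,\rho>0$ with $I_\la\ge\rho$ on $\{w\in\mb P^{\perp}:\|w\|=r\}$; and, since the quadratic part of $I_\la$ is negative definite on $\mb P$ and the Choquard term is homogeneous of degree $2\cdot2^*_{\mu,s}>2$, there is $R>r$ with $I_\la\le0$ on the boundary of
\[
Q=\{\,v+t\,u_\epsilon\ :\ v\in\mb P,\ \|v\|\le R,\ 0\le t\le R\,\}.
\]
(When $k=0$ this is the mountain pass geometry.) A standard linking theorem then produces a Palais--Smale sequence $\{u_n\}$ for $I_\la$ at the level
\[
c_\la=\inf_{\gamma\in\Gamma}\ \sup_{u\in Q}I_\la(\gamma(u))\ \in\ [\rho,+\infty),
\]
where $\Gamma$ is the class of continuous maps $Q\to X_0$ equal to the identity on $\partial Q$.

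Next I would show that $\{u_n\}$ is bounded and gives a nontrivial solution. Boundedness follows from the standard combination of $I_\la(u_n)-\frac{1}{2\cdot2^*_{\mu,s}}\langle I_\la'(u_n),u_n\rangle=(\frac12-\frac{1}{2\cdot2^*_{\mu,s}})(\|u_n\|^2-\la|u_n|_{2}^2)$, the coercivity of $\|w\|^2-\la|w|_{2}^2$ on $\mb P^{\perp}$, and testing $I_\la'(u_n)$ against the finite-dimensional $\mb P$-component of $u_n$. With boundedness in hand, the analysis of Section~3 applies: a bounded Palais--Smale sequence of $I_\la$ at a level strictly below
\[
c^{\ast}:=\frac{n+2s-\mu}{2(2n-\mu)}\,(S^H_s)^{\frac{2n-\mu}{n+2s-\mu}}
\]
has a weakly convergent subsequence whose limit is a weak solution of $(P_\la)$, and $c_\la\ge\rho>0$ forces that limit to be nontrivial. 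Thus the proof reduces to the estimate $c_\la<c^{\ast}$.

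This strict inequality is the heart of the matter, and it is precisely here that the hypothesis $2s<n<4s$ enters: in this range $c_\la<c^{\ast}$ cannot be obtained for small $\la$, and one must take $\la$ large. I would estimate $\sup_Q I_\la$ along the family $u_\epsilon$ using, as $\epsilon\to0$, the asymptotics
\[
\|u_\epsilon\|^2\le (S^H_s)^{\frac{2n-\mu}{n+2s-\mu}}+O(\epsilon^{\,n-2s}),\qquad
\int_\Om\!\!\int_\Om\frac{|u_\epsilon(x)|^{2^*_{\mu,s}}|u_\epsilon(y)|^{2^*_{\mu,s}}}{|x-y|^{\mu}}\,\mathrm dx\,\mathrm dy\ \ge\ (S^H_s)^{\frac{2n-\mu}{n+2s-\mu}}-o(\epsilon^{\,n-2s}),
\]
together with the crucial bound $|u_\epsilon|_{2}^2\ge C\,\epsilon^{\,n-2s}$, valid exactly because $2(n-2s)<n$ makes $\int_{B_{1/\epsilon}}|u^{\ast}|^2$ diverge. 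Inserting these into $I_\la(t\,u_\epsilon)$ and maximizing over $t\ge0$, the exponents collapse (as in the computation of $S^H_s$) and one gets $\sup_{t\ge0}I_\la(t\,u_\epsilon)\le c^{\ast}+C_1\epsilon^{\,n-2s}-C_2\la\epsilon^{\,n-2s}$ with $C_1,C_2>0$ independent of $\la$; the extra directions $v\in\mb P$ contribute only nonpositive terms plus cross terms that vanish as $\epsilon\to0$, so, after fixing $\epsilon$ small, the same bound holds for $\sup_Q I_\la$. Choosing $\bar\la$ with $C_2\bar\la>C_1$ then gives $c_\la\le\sup_Q I_\la<c^{\ast}$ for every $\la>\bar\la$ not equal to an eigenvalue, which completes the proof. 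The main obstacle is carrying through these bubble estimates: obtaining the matching order $\epsilon^{\,n-2s}$ simultaneously in $\|u_\epsilon\|^2$ and in $|u_\epsilon|_{2}^2$ when $2s<n<4s$, and controlling the truncated Choquard double integral (where the nonlocality makes the remainder analysis through Hardy--Littlewood--Sobolev delicate) together with the interaction of $u_\epsilon$ with $\mb P$; these run parallel to the scheme of \cite{myang}, adapted to the fractional operator and to the exponents $\tfrac{2n-\mu}{n+2s-\mu}$.
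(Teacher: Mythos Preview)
Your proposal is correct and follows essentially the same approach as the paper: the linking (or mountain pass when $k=0$) geometry built on the eigenspace decomposition $\mb P\oplus\mb P^{\perp}$, the Palais--Smale compactness below the critical level $\frac{n+2s-\mu}{2(2n-\mu)}(S^H_s)^{\frac{2n-\mu}{n+2s-\mu}}$, and the energy estimate via the truncated minimizer $u_\epsilon$ together with the key bound $|u_\epsilon|_2^2\ge C\epsilon^{n-2s}$ (valid precisely for $2s<n<4s$) to force the minimax level below threshold once $\la$ is large. The only point worth tightening is the order of choices---$\bar\la$ should be fixed from the $\mb P$-independent bubble constants $C_1,C_2$ first, and then for each $\la>\bar\la$ (hence each $k$) one takes $\epsilon$ small enough to absorb the $\mb P$-interaction terms, exactly as the paper does via its Lemmas~4.7 and~5.2.
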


\begin{Theorem}\label{thrm2}
Assume $n>2s$ and $s\in (0,1)$, then there exists a constant $\la_*$ such that if there are $q$ number of eigenvalues lying between $\la $ and $\la + \la_*$, then $(P_\la)$ has $q$ distinct pairs of solutions.
\end{Theorem}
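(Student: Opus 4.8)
The plan is to derive the multiplicity from the $\mb{Z}_2$-symmetry of the energy ($I_\la(-u)=I_\la(u)$) together with the compactness information of Section~3, via an abstract symmetric linking theorem for even functionals (the $\mb{Z}_2$-version of the linking theorem of Ambrosetti--Rabinowitz/Rabinowitz). Recall from Section~3 that $I_\la$ satisfies the Palais--Smale condition at every level below $c^\ast:=\frac{n+2s-\mu}{2(2n-\mu)}\,(S^H_s)^{\frac{2n-\mu}{n+2s-\mu}}$. Let $0<\la_1<\la_2\le\la_3\le\cdots\to\infty$ be the eigenvalues of $(-\De)^s$ with homogeneous Dirichlet datum in $\mb R^n\setminus\Om$, and $\{\varphi_k\}\subset X_0$ a corresponding system of eigenfunctions, orthogonal in $X_0$ and orthonormal in $L^2(\Om)$. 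For the given $\la$ I would set $V:=\mathrm{span}\{\varphi_k:\la_k\le\la\}$, let $X$ be its $X_0$-orthogonal complement (so $X_0=V\oplus X$ and $\|u\|^2\ge\mu_0|u|_2^2$ on $X$, with $\mu_0:=\min\{\la_k:\la_k>\la\}$), and, assuming $(\la,\la+\la_*)$ contains exactly $q$ eigenvalues, let $W':=\mathrm{span}$ of the corresponding $q$ eigenfunctions and $W:=V\oplus W'\subset X_0$, so that $W'\subset X$ and $\dim W-\dim V=q$. Writing $\mc B(u):=\int_\Om\int_\Om\frac{|u(x)|^{2^*_{\mu,s}}|u(y)|^{2^*_{\mu,s}}}{|x-y|^\mu}\,dx\,dy$, the Hardy--Littlewood--Sobolev inequality together with $S^H_s(\Om)=S^H_s$ gives $\mc B(u)\le(S^H_s)^{-2^*_{\mu,s}}\|u\|^{2\cdot 2^*_{\mu,s}}$ on $X_0$.

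Next I would check the geometric hypotheses. Near the origin on $X$: for $u\in X$ one has $\|u\|^2-\la|u|_2^2\ge(1-\la/\mu_0)\|u\|^2=:\kappa\|u\|^2$ with $\kappa>0$, hence $I_\la(u)\ge\frac\kappa2\|u\|^2-\frac{(S^H_s)^{-2^*_{\mu,s}}}{2\cdot 2^*_{\mu,s}}\|u\|^{2\cdot2^*_{\mu,s}}$; since $2\cdot2^*_{\mu,s}>2$ this yields $\rho,\alpha>0$ with $I_\la|_{X\cap\{\|u\|=\rho\}}\ge\alpha$. On $V$: $\|v\|^2\le\la|v|_2^2$ forces $I_\la(v)\le-\frac1{2\cdot2^*_{\mu,s}}\mc B(v)\le0$. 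On the finite-dimensional space $W$ all norms are equivalent, so $\mc B(u)\ge c_W\|u\|^{2\cdot2^*_{\mu,s}}$ for some $c_W>0$ (positivity of $c_W$ because a nonzero element of $W$ cannot vanish a.e.\ on $\Om$), whence $I_\la(u)\to-\infty$ as $\|u\|\to\infty$ in $W$ and $I_\la\le0$ on $W\cap\{\|u\|\ge R\}$ for some $R$.

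The one estimate needing care is that the resulting minimax levels lie below $c^\ast$, i.e.\ $\sup_W I_\la<c^\ast$. Writing $u=v+w\in W$ with $v\in V$, $w\in W'$, and using that $V,W'$ are mutually orthogonal in both $X_0$ and $L^2(\Om)$ while every eigenvalue occurring in $w$ is $<\la+\la_*$,
\[\|u\|^2-\la|u|_2^2=(\|v\|^2-\la|v|_2^2)+(\|w\|^2-\la|w|_2^2)\le 0+\la_*|w|_2^2\le\frac{\la_*}{\la_1}\|u\|^2.\]
Combined with $\mc B(u)\ge c_W\|u\|^{2\cdot2^*_{\mu,s}}$ this gives $I_\la(u)\le\frac{\la_*}{2\la_1}\|u\|^2-\frac{c_W}{2\cdot2^*_{\mu,s}}\|u\|^{2\cdot2^*_{\mu,s}}$ on $W$, and maximizing the right-hand side in $\|u\|$ produces $\sup_W I_\la\le C\,(\la_*)^{\frac{2^*_{\mu,s}}{2^*_{\mu,s}-1}}$ with $C=C(n,s,\mu,\la_1,c_W)$ and exponent $>1$ (since $2^*_{\mu,s}>1$). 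As $c^\ast$ is a fixed positive constant, choosing $\la_*$ small enough (depending on $\la$ and $\Om$, through $V$, $W$ and $c_W$) forces $\sup_W I_\la<c^\ast$.

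Finally I would invoke the abstract theorem: $I_\la$ is even, $I_\la(0)=0$, $I_\la$ satisfies $(PS)_c$ for $c<c^\ast$, $X_0=V\oplus X$ with $\dim V<\infty$, and the hypotheses of the $\mb{Z}_2$ linking theorem hold for the pair $V\subset W$ by the three checks above; hence $I_\la$ has at least $\dim W-\dim V=q$ pairs of critical points, all at levels in $[\alpha,\sup_W I_\la]\subset(0,c^\ast)$ (precisely the range where the $(PS)$ condition is available), and if two of the corresponding minimax values coincide the common level carries a critical set of positive genus, producing infinitely many critical points. Either way one obtains $q$ distinct pairs of nontrivial critical points of $I_\la$, equivalently $q$ distinct pairs of weak solutions of $(P_\la)$ (nontriviality since $I_\la\ge\alpha>0=I_\la(0)$ there). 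I expect the genuinely hard input to be not any of the steps above but the local Palais--Smale condition below $c^\ast$ established in Section~3, which the present argument uses as a black box; note in particular that, in contrast with Theorems~\ref{thrm1} and~\ref{newthrm}, no concentration analysis with the extremals $\tilde U_\epsilon$ is needed here, the multiplicity coming entirely from the clustering of eigenvalues encoded in the smallness of $\la_*$.
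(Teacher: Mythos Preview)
Your overall architecture (even functional, $\mb Z_2$-linking/pseudo-index, geometric checks on $V$, $X$, $W$, and use of Lemma~\ref{PSlevel} as a black box) matches the paper's. The gap is in the step that forces $\sup_W I_\la<c^\ast$. You lower-bound the nonlocal term by $\mc B(u)\ge c_W\|u\|^{2\cdot 2^*_{\mu,s}}$ with a constant $c_W$ that depends on the finite-dimensional space $W$, and then say you will ``choose $\la_*$ small enough (depending on $\la$ and $\Om$, through $V$, $W$ and $c_W$)''. But $W$ itself is the span of the eigenfunctions whose eigenvalues lie in $(\la,\la+\la_*)$, so $W$ --- and hence $c_W$ --- already depends on $\la_*$. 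This circularity is fatal: as you shrink $\la_*$ to beat your bound $C(\la_*)^{2^*_{\mu,s}/(2^*_{\mu,s}-1)}$, the hypothesis ``$q$ eigenvalues in $(\la,\la+\la_*)$'' may cease to hold, and conversely for a prescribed cluster of $q$ eigenvalues you have no control on $c_W$ and hence no a priori smallness for $\la_*$. In particular your argument does not produce a $\la_*$ that is independent of the spectral data, which is what the theorem asserts.

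The paper avoids this by replacing the finite-dimensional estimate $\mc B(u)\ge c_W\|u\|^{2\cdot2^*_{\mu,s}}$ with a \emph{universal} lower bound coming from Lemma~\ref{equiv-norm}: there is $C'>0$ depending only on $n,s,\mu,\Om$ with $(C')^2|u|_{2^*_s}^2\le \mc B(u)^{(n-2s)/(2n-\mu)}$ for every $u\in L^{2^*_s}(\Om)$. Combined with H\"older ($|u|_2^2\le|\Om|^{2s/n}|u|_{2^*_s}^2$) and the eigenvalue bound $\|u\|^2-\la|u|_2^2\le(\la_{k+m}-\la)|u|_2^2<\la_*|u|_2^2$ on $W$, one gets
\[
\frac{\|u\|^2-\la|u|_2^2}{\mc B(u)^{(n-2s)/(2n-\mu)}}\ <\ \frac{\la_*\,|\Om|^{2s/n}}{(C')^2},
\]
which is $<S^H_s$ as soon as $\la_*:=S^H_s(C')^2/|\Om|^{2s/n}$. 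This value is fixed once and for all, independent of $\la$, $q$, and $W$; the $|u|_{2^*_s}^2$ factors cancel, so no dimension-dependent constant enters. The rest of your outline then goes through. If you want to keep your write-up, the minimal repair is to replace the $c_W$-bound by the norm equivalence of Lemma~\ref{equiv-norm} together with H\"older, and to define $\la_*$ explicitly as above.
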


\begin{Theorem}\label{thrm4}
Let $0 \leq u \in X_0$, $n>2s$ and $\la >0$ be such that
\begin{equation*}
\begin{split}
&\int_{Q}\frac{(u(x)-u(y))(\varphi(x)-\varphi(y))}{|x-y|^{n+2s}}~\mathrm{d}x\mathrm{d}y\\
&\quad = \int_{\Om}\int_{\Om}\frac{|u(x)|^{2^*_{\mu,s}}|u(y)|^{2^*_{\mu,s}-2}u(y)\varphi(y)}{|x-y|^{\mu}}~\mathrm{d}x\mathrm{d}y+ \la \int_{\Om}u\varphi ~dx,
\end{split}
\end{equation*}
for every $\varphi \in C_c^{\infty}(\Om)$, i.e. $u$ is a nonnegative weak solution of $(P_\la)$. Then, $u \in L^{\infty}(\Om)$.
\end{Theorem}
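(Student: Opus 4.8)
The natural route is a truncation-based Moser / Brezis-Kato iteration adapted to the nonlocal operator. The first step is to read $(P_\la)$, for the given nonnegative solution $u$, as a \emph{linear} equation with a singular potential: putting
\[
h(x):=\int_{\Om}\frac{u(y)^{2^*_{\mu,s}}}{|x-y|^{\mu}}\,\mathrm{d}y,
\qquad
V(x):=h(x)\,u(x)^{2^*_{\mu,s}-2}+\la,
\]
$u$ solves $(-\De)^su=V(x)\,u$ in $\Om$, $u=0$ in $\mb R^n\setminus\Om$. The key point is that $V\in L^{n/2s}(\Om)$, which is exactly the \emph{critical} exponent for this kind of argument: since $u\in X_0\hookrightarrow L^{2^*_s}(\Om)$ one has $u^{2^*_{\mu,s}}\in L^{\frac{2n}{2n-\mu}}(\Om)$, whence the Hardy-Littlewood-Sobolev inequality gives $h\in L^{\frac{2n}{\mu}}(\Om)$; combining this with $u^{2^*_{\mu,s}-2}\in L^{\frac{2n}{4s-\mu}}(\Om)$ through H\"older's inequality yields $V\in L^{n/2s}(\Om)$, the bounded term $\la$ being harmless. (When $2^*_{\mu,s}\le 2$ one instead keeps the sublinear right-hand side $h\,u^{2^*_{\mu,s}-1}+\la u$ and runs the bootstrap below directly on it; we describe the representative case $2^*_{\mu,s}>2$.)

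For the iteration, fix $K>0$, set $u_K:=\min\{u,K\}$, and for $\beta\ge 1$ test the equation with $\varphi:=u\,u_K^{2(\beta-1)}$; this is admissible because $\varphi$ is a Lipschitz function of $u$ vanishing at the origin, hence $\varphi\in X_0$. Writing $w_K:=u\,u_K^{\beta-1}$, so that $w_K^2=u^2u_K^{2(\beta-1)}=u\varphi$, and using the elementary pointwise inequality
\[
(a-b)\bigl(a\,a_K^{2(\beta-1)}-b\,b_K^{2(\beta-1)}\bigr)\ \ge\ \tfrac1\beta\,\bigl|a\,a_K^{\beta-1}-b\,b_K^{\beta-1}\bigr|^2
\qquad(a,b\ge 0),
\]
the weak formulation gives $\tfrac1\beta\|w_K\|^2\le\int_\Om V\,w_K^2$. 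Now split $V=V_1+V_2$ with $V_2\in L^\infty(\Om)$ and $\|V_1\|_{L^{n/2s}(\Om)}$ as small as we wish; estimating $\int_\Om V_1w_K^2\le\|V_1\|_{L^{n/2s}(\Om)}\,\|w_K\|_{L^{2^*_s}(\Om)}^2$ and using the Sobolev inequality $S_s\|w_K\|_{L^{2^*_s}(\Om)}^2\le\|w_K\|^2$, we absorb the $V_1$-term and obtain
\[
\|w_K\|_{L^{2^*_s}(\Om)}^2\ \le\ C(\beta)\int_\Om w_K^2\ \le\ C(\beta)\int_\Om u^{2\beta}.
\]
The left-hand side is a priori finite since $w_K\le K^{\beta-1}u\in L^{2^*_s}(\Om)$, so letting $K\to\infty$ by monotone convergence we get $\bigl(\int_\Om u^{\beta 2^*_s}\bigr)^{2/2^*_s}\le C(\beta)\int_\Om u^{2\beta}$, i.e. $u\in L^{2\beta}(\Om)$ forces $u\in L^{\beta 2^*_s}(\Om)$. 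Starting from $\beta=1$ and iterating along $\beta_{m+1}=\tfrac{2^*_s}{2}\,\beta_m$ shows $u\in L^q(\Om)$ for every $q<\infty$.

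The final step upgrades the convolution potential. Once $u\in L^q(\Om)$ for all finite $q$, pick any $p>\tfrac{n}{n-\mu}$; then $\mu p'<n$ and $\Om$ bounded give $\sup_{x\in\Om}\bigl\||x-\cdot|^{-\mu}\bigr\|_{L^{p'}(\Om)}<\infty$, so
\[
h(x)\ \le\ \|u^{2^*_{\mu,s}}\|_{L^p(\Om)}\,\bigl\||x-\cdot|^{-\mu}\bigr\|_{L^{p'}(\Om)}
\]
forces $h\in L^\infty(\Om)$. Hence $V=h\,u^{2^*_{\mu,s}-2}+\la\in L^q(\Om)$, and so $Vu\in L^q(\Om)$, for every $q<\infty$; feeding this back into the same truncation scheme — now with a right-hand side far more integrable than $L^{n/2s}$, so that no splitting is needed and the iteration constants stay under control — yields $u\in L^\infty(\Om)$ (equivalently, one invokes the standard $L^\infty$-estimate for $(-\De)^su=f$ with $f\in L^r(\Om)$, $r>n/2s$).

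The crux, and the main technical obstacle, is the borderline membership $V\in L^{n/2s}(\Om)$: the absorption step works only after splitting off a part $V_1$ of small $L^{n/2s}$-norm, the required smallness depending on $\beta$, and — more importantly — this borderline iteration alone \emph{cannot} give $u\in L^\infty(\Om)$; one genuinely needs the \emph{a posteriori} observation that $h$ becomes bounded, after which a second, non-borderline iteration (or standard fractional regularity) closes the argument. The admissibility of $\varphi=u\,u_K^{2(\beta-1)}$ in $X_0$ and the nonlocal pointwise inequality converting the bilinear form into control of $\|w_K\|$ are by now routine in the fractional regularity literature, but should be recorded carefully given the double nonlocal structure of $(P_\la)$.
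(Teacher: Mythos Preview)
Your argument is correct, but the paper takes a genuinely different route. Rather than linearizing and running a Moser/Brezis--Kato bootstrap, the paper performs a De Giorgi level-set iteration directly on the nonlinear equation: after rescaling $v=u/c$ so that $|v|_{2^*_s}=\delta$ is small, it sets $C_k=1-2^{-k}$, $w_k=(v-C_k)^+$, $U_k=|w_k|_{2^*_s}$, and tests with $w_{k+1}$. The Choquard term is handled by splitting the inner convolution integral over $\{v(y)>C_{k+1}\}$ and its complement, which (together with Hardy--Littlewood--Sobolev and the measure estimate $|\{w_{k+1}>0\}|\le 2^{(k+1)2^*_s}U_k^{2^*_s}$) produces a nonlinear recursion $U_{k+1}\le D^{k+1}\bigl(U_k^{2^*_{\mu,s}}+U_k^{2^*_s/2}\bigr)$; for $\delta$ small this forces $U_k\to 0$, hence $v\le 1$ a.e.\ and $u\le c$.

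The trade-offs are as follows. Your scheme is modular: it isolates the convolution as a potential $h\in L^{2n/\mu}$, reduces to the linear problem $(-\Delta)^su=Vu$ with $V\in L^{n/2s}$, and then --- once the borderline iteration gives $u\in L^q$ for all $q$ --- observes $h\in L^\infty$ and closes with the standard subcritical estimate; this costs a two-pass argument and the $\beta$-dependent splitting of $V$, but it is reusable for any nonlinearity of the same scaling. The paper's De Giorgi iteration is one-shot, yields an explicit $L^\infty$ bound in terms of $|u|_{2^*_s}$, and avoids linearizing, at the price of estimating the Choquard term against the truncations in a way tailored to this specific nonlinearity.
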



\begin{Theorem}\label{thrm3}
Let $n>2s$, $\la <0$ and $\Om \neq \mb R^n$ be a strictly star shaped (with respect to origin), $C^{1,1}$ and bounded domain in $\mb R^n$, then  $(P_\la)$ cannot have a  nonnegative nontrivial solution.
\end{Theorem}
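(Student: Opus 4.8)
The plan is to establish the nonexistence result in Theorem~\ref{thrm3} by deriving a Pohozaev-type identity for nonnegative weak solutions of $(P_\la)$ and then showing that, for $\la<0$ and $\Om$ strictly star-shaped with respect to the origin, this identity forces $u\equiv 0$. First I would multiply the equation formally by the vector field $x\cdot\na u$ (the infinitesimal generator of dilations) and integrate over $\Om$; the rigorous version of this, in the fractional setting, is the Pohozaev identity for $(-\De)^s$ due to Ros-Oton and Serra, which produces a nonnegative boundary term involving $\int_{\pa\Om}\left(\frac{u}{\de^s}\right)^2 (x\cdot\nu)\,\mathrm{d}\sigma$, where $\de(x)=\Dist(x,\pa\Om)$ and $\nu$ is the outward normal. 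Because $\Om$ is $C^{1,1}$ and strictly star-shaped, $x\cdot\nu>0$ on $\pa\Om$, so this term is nonnegative (and is zero only if $u/\de^s\equiv 0$ on $\pa\Om$). The Choquard-type nonlinearity requires the corresponding Pohozaev computation for the doubly nonlocal term: using the change of variables in the double integral $\int_{\Om}\int_{\Om}\frac{|u(x)|^{2^*_{\mu,s}}|u(y)|^{2^*_{\mu,s}}}{|x-y|^{\mu}}$ under the scaling $x\mapsto \la x$, $y\mapsto \la y$, and the homogeneity $|\la x-\la y|^{-\mu}=\la^{-\mu}|x-y|^{-\mu}$, one finds the dilation derivative of this term carries the factor $\tfrac{2n-\mu}{2}\cdot\tfrac{1}{2^*_{\mu,s}} = \tfrac{n-2s}{2}$, exactly matching the scaling of the Dirichlet form, which is why the critical exponent $2^*_{\mu,s}$ produces a clean cancellation.

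Carrying this out, I expect the identity to read, schematically,
\begin{equation*}
\frac{n-2s}{2}\|u\|^2 - \frac{n-2s}{2}\cdot\frac{1}{2^*_{\mu,s}}\int_{\Om}\int_{\Om}\frac{|u(x)|^{2^*_{\mu,s}}|u(y)|^{2^*_{\mu,s}}}{|x-y|^{\mu}}~\mathrm{d}x\mathrm{d}y - \frac{\la n}{2}\int_{\Om}|u|^2\mathrm{d}x = -\frac{\Ga(1+s)^2}{2}\int_{\pa\Om}\left(\frac{u}{\de^s}\right)^2 (x\cdot\nu)\,\mathrm{d}\sigma.
\end{equation*}
Next I would combine this with the weak formulation tested against $u$ itself (which is legitimate by density, since $u\in X_0$), namely $\|u\|^2 = \int_{\Om}\int_{\Om}\frac{|u(x)|^{2^*_{\mu,s}}|u(y)|^{2^*_{\mu,s}}}{|x-y|^{\mu}}~\mathrm{d}x\mathrm{d}y + \la\int_{\Om}|u|^2\mathrm{d}x$. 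Substituting to eliminate the nonlocal nonlinear term, the $\|u\|^2$ contributions also cancel by the critical-exponent matching, and one is left with an identity of the form $c\,\la\int_{\Om}|u|^2\mathrm{d}x = (\text{nonnegative boundary term})$ for some positive constant $c$ (concretely $c = n - \tfrac{n-2s}{2}\cdot 2 = 2s$ up to the bookkeeping, so $c>0$). Since $\la<0$, the left side is $\le 0$ while the right side is $\ge 0$; hence both vanish, forcing $\int_{\Om}|u|^2\mathrm{d}x = 0$ and therefore $u\equiv 0$, contradicting nontriviality.

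The main obstacle is making the Pohozaev identity rigorous rather than formal: for the fractional Laplacian one cannot simply integrate by parts, and the boundary term involving $u/\de^s$ only makes sense once one knows enough boundary regularity of $u$. So I would first invoke Theorem~\ref{thrm4} to get $u\in L^\infty(\Om)$, then bootstrap (using the $C^{0,\al}$ regularity discussed in the paper, together with the regularity theory of Ros-Oton--Serra for the Dirichlet problem in $C^{1,1}$ domains) to obtain that $u/\de^s$ extends continuously up to $\pa\Om$, which is exactly the hypothesis under which the fractional Pohozaev identity holds. One must also check that the Choquard convolution potential $\left(\int_{\Om}\frac{|u|^{2^*_{\mu,s}}}{|x-y|^{\mu}}\mathrm{d}y\right)|u|^{2^*_{\mu,s}-2}u$ is regular enough (it is, since $0<\mu<n$ and $u\in L^\infty$ imply the potential is bounded, indeed Hölder continuous) to serve as an admissible right-hand side in that theorem. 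The strict star-shapedness is used precisely to guarantee $x\cdot\nu>0$ everywhere on $\pa\Om$, turning the sign information in the Pohozaev identity into the desired contradiction; without strictness one would only get $u/\de^s = 0$ on a portion of the boundary, which is insufficient.
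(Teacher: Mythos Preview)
Your approach is correct and is essentially the same as the paper's: establish the fractional Pohozaev identity of Ros-Oton--Serra (after using Theorem~\ref{thrm4} to obtain $u\in L^\infty$ so that the right-hand side is admissible), compute the contribution of the Choquard term via its dilation homogeneity, combine with $\langle I'(u),u\rangle=0$ so that the critical exponent forces the Dirichlet and Choquard terms to cancel, and conclude from the sign of the boundary term when $\Om$ is strictly star-shaped. One small slip: in your displayed ``schematic'' identity the coefficient in front of the Choquard integral should be $\tfrac{n-2s}{2}$ (equivalently $\tfrac{2n-\mu}{2}\cdot\tfrac{1}{2^*_{\mu,s}}$), not $\tfrac{n-2s}{2}\cdot\tfrac{1}{2^*_{\mu,s}}$; you in fact compute this correctly in the sentence preceding the display, and with the right coefficient the cancellation you describe goes through and yields exactly the paper's final relation $\la\int_\Om u^2\,\mathrm{d}x = \text{(nonnegative boundary term)}$.
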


\section{Preliminary Results}
We consider $\Om$ to be a bounded domain in $\mb R^n$ with Lipschitz boundary and $\la$ to be a real parameter throughout this paper.

\begin{Definition}
Let $I$ be a $C^1$ functional defined on Banach space $X$, we say that $\{v_k\}$ is a Palais-Smale sequence of $I$ at $c$ (denoted by $(PS)_c$) if
\[I(v_k) \rightarrow c, \; \text{ and } \; I^{\prime}(v_k) \rightarrow 0, \; \text{as} \; k \rightarrow +\infty.\]
And we say that $I$ satisfies the Palais-Smale condition at the level $c$, if every Palais-Smale sequence at $c$ has a convergent subsequence.
\end{Definition}
The following lemmas can be proved using the standard methods but we give some of their proof here for the sake of completeness. To begin, we recall that pointwise convergence of a bounded sequence implies weak convergence.

\begin{Lemma}
Let $q \in (1, \infty)$ and $\{u_k\}$ be a bounded sequence in $L^q(\mb R^n)$. If $u_k \rightarrow v$ almost everywhere in $\mb R^n$ as $k \rightarrow \infty$, then $u_k \rightharpoonup u $ weakly in $L^q(\mb R^n)$.
\end{Lemma}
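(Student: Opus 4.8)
The plan is to reduce the assertion to a scalar subsequence argument and then to identify the weak limit with the pointwise limit $v$ (the ``$u$'' in the statement is of course meant to be $v$). First I would record, by Fatou's lemma, that
\[
\int_{\mb R^n}|v|^q\,\mathrm{d}x \;\le\; \liminf_{k\to\infty}\int_{\mb R^n}|u_k|^q\,\mathrm{d}x \;\le\; \sup_k\|u_k\|_{L^q(\mb R^n)}^q \;<\;\infty,
\]
so $v\in L^q(\mb R^n)$, and the claim $u_k\rightharpoonup v$ amounts to proving $\int_{\mb R^n}u_k\varphi\,\mathrm{d}x\to\int_{\mb R^n}v\varphi\,\mathrm{d}x$ for every $\varphi$ in the dual space $L^{q'}(\mb R^n)$, where $q'=q/(q-1)$.

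Fix such a $\varphi$ and set $a_k=\int_{\mb R^n}u_k\varphi\,\mathrm{d}x$. By the elementary subsequence principle for sequences of reals, it suffices to show that every subsequence of $(a_k)$ admits a further subsequence converging to $\int_{\mb R^n}v\varphi\,\mathrm{d}x$. Given a subsequence, since $1<q<\infty$ the space $L^q(\mb R^n)$ is reflexive, so the (bounded) corresponding subsequence of $(u_k)$ has a sub-subsequence $u_{k_j}\rightharpoonup w$ weakly in $L^q(\mb R^n)$ for some $w\in L^q(\mb R^n)$; then $a_{k_j}=\int u_{k_j}\varphi\,\mathrm{d}x\to\int w\varphi\,\mathrm{d}x$, and everything is reduced to proving $w=v$ a.e.\ in $\mb R^n$.

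This identification is the only genuine point, and I would argue it locally. On any ball $B_R$ the sub-subsequence still converges weakly in $L^q(B_R)$ and a.e.\ to $v$; by Egorov's theorem, for any $\delta>0$ there is a measurable $E\subset B_R$ with $|B_R\setminus E|<\delta$ on which $u_{k_j}\to v$ uniformly, so $u_{k_j}\mathbf{1}_E\to v\mathbf{1}_E$ strongly in $L^q(B_R)$. On the other hand, multiplication by the fixed function $\mathbf{1}_E\in L^\infty$ preserves weak convergence, whence $u_{k_j}\mathbf{1}_E\rightharpoonup w\mathbf{1}_E$; by uniqueness of weak limits, $w=v$ a.e.\ on $E$, and letting $\delta\to 0$ and then $R\to\infty$ gives $w=v$ a.e.\ in $\mb R^n$. (Alternatively one may invoke Mazur's lemma: convex combinations from the tail of $(u_{k_j})$ converge to $w$ strongly, hence a.e.\ along a further subsequence, while each such finite convex combination also converges a.e.\ to $v$, so $w=v$.) Once $w=v$ is established, the subsequence principle closes the argument. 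The main obstacle --- a mild one --- is exactly this bridging between a.e.\ convergence and weak convergence, supplied by Egorov (or Mazur); the rest is routine bookkeeping.
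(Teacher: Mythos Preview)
Your argument is correct and complete. The paper does not supply a proof of this lemma; it is merely stated as a recalled standard fact (introduced with ``we recall that pointwise convergence of a bounded sequence implies weak convergence'') before being used in the subsequent Brezis--Lieb-type lemma. Your route --- reflexivity of $L^q$ to extract a weakly convergent sub-subsequence, then Egorov on balls to identify the weak limit with the a.e.\ limit, closed off by the scalar subsequence principle --- is one of the standard proofs of this result, so there is nothing to compare against in the paper itself.
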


\begin{Lemma}\label{conv}
Let $n>2s$, $0<\mu<n$ and $\{u_k\}$ be a bounded sequence in $L^{2^*_s}(\mb R^n)$ such that $u_k \rightarrow u$ almost everywhere in $\mb R^n$ as $n \rightarrow \infty$, then the following hold,
\begin{equation*}
\begin{split}
\int_{\mb R^n} \int_{\mb R^n} \frac{|u_k(x)|^{2^*_{\mu,s}}|u_k(y)|^{2^*_{\mu,s}}}{|x-y|^{\mu}}~\mathrm{d}x\mathrm{d}y &- \int_{\mb R^n} \int_{\mb R^n} \frac{|(u_k-u)(x)|^{2^*_{\mu,s}}|(u_k-u)(y)|^{2^*_{\mu,s}}}{|x-y|^{\mu}}~\mathrm{d}x\mathrm{d}y\\
& \rightarrow \int_{\mb R^n} \int_{\mb R^n} \frac{|u(x)|^{2^*_{\mu,s}}|u(y)|^{2^*_{\mu,s}}}{|x-y|^{\mu}}~\mathrm{d}x \mathrm{d}y \; \text{ as}\;  k \rightarrow \infty.
\end{split}
\end{equation*}
\end{Lemma}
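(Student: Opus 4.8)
\emph{Overview.} The plan is to combine a Brezis--Lieb type splitting for the composed sequence $\{|u_k|^{2^*_{\mu,s}}\}$ with the bilinearity and continuity of the Hardy--Littlewood--Sobolev double integral. Put $t=\frac{2n}{2n-\mu}$, so that $t\in(1,2)$, $\frac2t+\frac\mu n=2$, and $2^*_{\mu,s}\,t=2^*_s$; hence $v\in L^{2^*_s}(\mb R^n)$ if and only if $|v|^{2^*_{\mu,s}}\in L^t(\mb R^n)$, with $\big\||v|^{2^*_{\mu,s}}\big\|_t=|v|_{2^*_s}^{2^*_{\mu,s}}$, and the symmetric bilinear form
\[B(f,g):=\int_{\mb R^n}\int_{\mb R^n}\frac{f(x)\,g(y)}{|x-y|^\mu}\,\mathrm dx\,\mathrm dy\]
obeys $|B(f,g)|\le C\,|f|_t\,|g|_t$ by the Hardy--Littlewood--Sobolev inequality. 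Note that $u\in L^{2^*_s}(\mb R^n)$ by Fatou's lemma and that $\{u_k-u\}$ is bounded in $L^{2^*_s}(\mb R^n)$. With $a_k:=|u_k-u|^{2^*_{\mu,s}}$ and $b:=|u|^{2^*_{\mu,s}}$, the claim reads $B(|u_k|^{2^*_{\mu,s}},|u_k|^{2^*_{\mu,s}})-B(a_k,a_k)\to B(b,b)$.

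\textbf{Step 1: Brezis--Lieb in $L^t$.} I would first prove that $h_k:=|u_k|^{2^*_{\mu,s}}-a_k-b\to0$ strongly in $L^t(\mb R^n)$. Since $2^*_{\mu,s}>1$, the elementary inequality $\big||\alpha+\beta|^p-|\alpha|^p\big|\le\e|\alpha|^p+C_\e|\beta|^p$ (from the mean value theorem and Young's inequality, with $p=2^*_{\mu,s}$), applied with $\alpha=(u_k-u)(x)$ and $\beta=u(x)$, yields $|h_k(x)|\le\e\,|(u_k-u)(x)|^{2^*_{\mu,s}}+(C_\e+1)|u(x)|^{2^*_{\mu,s}}$ for a.e.\ $x$. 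Setting $G_k^\e:=\big(|h_k|-\e|u_k-u|^{2^*_{\mu,s}}\big)^+$, one has $G_k^\e\to0$ a.e.\ and $0\le G_k^\e\le(C_\e+1)|u|^{2^*_{\mu,s}}\in L^t(\mb R^n)$, so $\|G_k^\e\|_t\to0$ by dominated convergence. Since
\[\|h_k\|_t\le\|G_k^\e\|_t+\e\,\big\||u_k-u|^{2^*_{\mu,s}}\big\|_t=\|G_k^\e\|_t+\e\,|u_k-u|_{2^*_s}^{2^*_{\mu,s}}\]
and $\{u_k-u\}$ is bounded in $L^{2^*_s}$, letting $k\to\infty$ and then $\e\to0$ gives $\|h_k\|_t\to0$.

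\textbf{Step 2: expanding the double integral.} Now $\{a_k\}$ is bounded in $L^t$, $b\in L^t$, $h_k\to0$ in $L^t$, and $|u_k|^{2^*_{\mu,s}}=a_k+b+h_k$. By bilinearity,
\[B\big(|u_k|^{2^*_{\mu,s}},|u_k|^{2^*_{\mu,s}}\big)-B(a_k,a_k)=B(b,b)+B(h_k,h_k)+2B(a_k,h_k)+2B(b,h_k)+2B(a_k,b).\]
The HLS bound controls three of the remainder terms: $|B(h_k,h_k)|\le C\|h_k\|_t^2\to0$, $|B(a_k,h_k)|\le C\|a_k\|_t\|h_k\|_t\to0$, and $|B(b,h_k)|\le C\|b\|_t\|h_k\|_t\to0$. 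For the last term, $u_k-u\to0$ a.e.\ and $\{a_k\}$ is bounded in $L^t$, so by the previous lemma (pointwise convergence of a bounded sequence yields weak convergence) $a_k\rightharpoonup0$ in $L^t(\mb R^n)$; moreover the Riesz potential $I_\mu*b$, with $I_\mu(z)=|z|^{-\mu}$, belongs to $L^{t'}(\mb R^n)$ by the Hardy--Littlewood--Sobolev fractional integration theorem, since $\frac1{t'}=1-\frac1t=\frac1t+\frac\mu n-1$ and $1<t<t'<\infty$. Hence $B(a_k,b)=\int_{\mb R^n}a_k\,(I_\mu*b)\,\mathrm dx\to0$, and the proof is complete.

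\textbf{Main obstacle.} The crux is Step 1: promoting the evident a.e.\ convergence $h_k\to0$ to \emph{strong} convergence in $L^t$, which is precisely what makes all HLS cross terms involving $h_k$ negligible; the truncation device with $G_k^\e$ together with dominated convergence is what achieves this. The only genuinely nonlocal point is $B(a_k,b)\to0$, which is not a norm estimate but a duality/weak-convergence statement, handled by pairing $a_k\rightharpoonup0$ in $L^t$ with $I_\mu*b\in L^{t'}$.
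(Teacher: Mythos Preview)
Your argument is correct. The paper itself gives no proof here, merely referring to Lemma~2.3 of \cite{myang}; your write-up supplies exactly the standard machinery behind that citation---the Brezis--Lieb splitting $|u_k|^{2^*_{\mu,s}}=|u_k-u|^{2^*_{\mu,s}}+|u|^{2^*_{\mu,s}}+h_k$ with $h_k\to0$ strongly in $L^{\frac{2n}{2n-\mu}}$, followed by expanding the HLS bilinear form and killing the cross terms via the norm bound (for those involving $h_k$) and via weak convergence paired with the Riesz potential $I_\mu*|u|^{2^*_{\mu,s}}\in L^{\frac{2n}{\mu}}$ (for the $B(a_k,b)$ term). All exponents check: $t=\frac{2n}{2n-\mu}\in(1,2)$, $2^*_{\mu,s}t=2^*_s$, $\frac2t+\frac\mu n=2$, and $I_\mu$ maps $L^t\to L^{t'}$ since $\frac1{t'}=\frac1t-\frac{n-\mu}{n}$. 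Your invocation of the preceding lemma (bounded a.e.\ convergent sequences converge weakly) for $a_k\rightharpoonup0$ in $L^t$ is precisely how that lemma is meant to be used here.
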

\begin{proof}
Proof follows similarly as proof of lemma 2.3 \cite{myang}.\hfill{\QED}
\end{proof}

\begin{Lemma}\label{PSbdd}
Let $n>2s$, $0<\mu<n$. Then every Palais-Smale sequence of $I$ is bounded and its weak limit is a weak solution of $(P_\la)$.
\end{Lemma}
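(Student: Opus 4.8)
The plan is to split the statement into two parts: first, the boundedness of any $(PS)_c$ sequence $\{u_k\}$ in $X_0$; second, the verification that its weak limit $u$ is a weak solution of $(P_\la)$.

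For boundedness, I would use the standard trick of combining $I(u_k)\to c$ with $\langle I'(u_k),u_k\rangle = o(\|u_k\|)$. Concretely, compute
\[
I(u_k) - \frac{1}{2\cdot 2^*_{\mu,s}}\langle I'(u_k),u_k\rangle
= \left(\frac12 - \frac{1}{2\cdot 2^*_{\mu,s}}\right)\|u_k\|^2
- \la\left(\frac12 - \frac{1}{2\cdot 2^*_{\mu,s}}\right)\int_\Om |u_k|^2\,dx,
\]
so that the nonlocal Choquard term cancels exactly. Since $2^*_{\mu,s}>1$, the coefficient $\theta := \tfrac12 - \tfrac{1}{2\cdot 2^*_{\mu,s}}$ is positive. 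The left-hand side is bounded by $c+1+o(1)\|u_k\|$ for large $k$. For the sign of $\la$: if $\la\le 0$ the term $-\la\theta\int_\Om|u_k|^2$ is nonnegative and we immediately get $\theta\|u_k\|^2 \le c+1+o(1)\|u_k\|$, hence boundedness. If $\la>0$, I would bound $\int_\Om|u_k|^2 \le \la_1^{-1}\|u_k\|^2$ using the variational characterization of the first eigenvalue $\la_1$ (equivalently the Poincaré inequality), obtaining $\theta(1-\la/\la_1)\|u_k\|^2 \le c+1+o(1)\|u_k\|$; this gives boundedness when $\la<\la_1$. (For the multiplicity theorem the relevant $(PS)$ sequences arise from an interval of eigenvalues and a slightly different linear-algebra bookkeeping applies, but the same cancellation identity is the engine.) In every case the estimate $\theta'\|u_k\|^2 \le C + o(1)\|u_k\|$ with $\theta'>0$ forces $\sup_k\|u_k\| < \infty$.

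For the weak-limit step, boundedness in the Hilbert space $X_0$ gives a subsequence (not relabeled) with $u_k \rightharpoonup u$ in $X_0$, $u_k\to u$ in $L^r(\Om)$ for $r<2^*_s$ by compact embedding, and $u_k\to u$ a.e. in $\mb R^n$. I must check that $u$ satisfies the weak formulation in the Definition, i.e. that $\langle I'(u),\varphi\rangle = 0$ for all $\varphi\in C_c^\infty(\Om)$. Passing to the limit in the bilinear form $\int_Q \frac{(u_k(x)-u_k(y))(\varphi(x)-\varphi(y))}{|x-y|^{n+2s}}\,dx\,dy$ is immediate from weak convergence in $X_0$, and in $\la\int_\Om u_k\varphi\,dx$ from the $L^2$-convergence. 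The only delicate term is the Choquard convolution term
\[
\int_\Om\int_\Om \frac{|u_k(x)|^{2^*_{\mu,s}}\,|u_k(y)|^{2^*_{\mu,s}-2}u_k(y)\,\varphi(y)}{|x-y|^{\mu}}\,dx\,dy
\;\longrightarrow\;
\int_\Om\int_\Om \frac{|u(x)|^{2^*_{\mu,s}}\,|u(y)|^{2^*_{\mu,s}-2}u(y)\,\varphi(y)}{|x-y|^{\mu}}\,dx\,dy.
\]
Here I would argue via Hardy–Littlewood–Sobolev: the boundedness of $\{u_k\}$ in $L^{2^*_s}(\Om)$ implies that $|u_k|^{2^*_{\mu,s}}$ is bounded in $L^{2n/(2n-\mu)}(\Om)$ and that $|u_k|^{2^*_{\mu,s}-2}u_k\varphi$ is bounded in $L^{2n/(2n-\mu)}(\Om)$ as well (using that $\varphi$ is bounded with compact support). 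By a.e. convergence and the Brezis–Lieb type argument, $|u_k|^{2^*_{\mu,s}} \rightharpoonup |u|^{2^*_{\mu,s}}$ weakly in $L^{2n/(2n-\mu)}$ and $|u_k|^{2^*_{\mu,s}-2}u_k\varphi \to |u|^{2^*_{\mu,s}-2}u\varphi$ strongly in $L^{2n/(2n-\mu)}$ (strong convergence because the exponent $2^*_{\mu,s}-1$ multiplied through keeps us strictly below the critical Sobolev growth on the compact support of $\varphi$, so Vitali/dominated convergence applies). Since the Riesz potential operator $f\mapsto |x|^{-\mu}*f$ is bounded from $L^{2n/(2n-\mu)}$ to $L^{2n/\mu}$, pairing a weakly convergent sequence against a strongly convergent one passes to the limit. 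This establishes $\langle I'(u),\varphi\rangle=0$.

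The main obstacle is precisely this last passage to the limit in the doubly nonlocal term: one must be careful about which factor converges weakly and which strongly, and to set up the correct dual Lebesgue exponents so that Hardy–Littlewood–Sobolev applies on both sides. The boundedness part is routine once the cancellation identity is written down; the subtlety there is only the case distinction on the sign and size of $\la$ relative to $\la_1$, which is where the Poincaré/eigenvalue inequality enters. I expect the Choquard-term convergence to be handled by citing the Brezis–Lieb lemma for convolution nonlinearities (as in Lemma~\ref{conv}, which is quoted from \cite{myang}) together with the compact Sobolev embedding $X_0\hookrightarrow L^r(\Om)$ for subcritical $r$.
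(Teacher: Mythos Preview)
Your treatment of the weak-limit step is correct and in fact somewhat sharper than the paper's. You pair the weakly convergent Riesz potential of $|u_k|^{2^*_{\mu,s}}$ in $L^{2n/\mu}$ against the \emph{strongly} convergent factor $|u_k|^{2^*_{\mu,s}-2}u_k\varphi$ in the dual space $L^{2n/(2n-\mu)}$; the strong convergence holds because $(2^*_{\mu,s}-1)\cdot\frac{2n}{2n-\mu}=\frac{2n(n+2s-\mu)}{(n-2s)(2n-\mu)}<2^*_s$, so compact embedding applies. The paper instead records weak convergence of each factor separately and then asserts weak convergence of the product in $L^{2n/(n+2s)}$, which is the same conclusion but less transparently justified.

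Your boundedness argument, however, has a real gap for $\la\ge\la_1$. The lemma is stated for arbitrary $\la$ and is later invoked (through Lemma~\ref{PSlevel}) in the linking-theorem regime $\la\in[\la_r,\la_{r+1})$ of Theorem~\ref{thrm1} and in Theorems~\ref{newthrm} and~\ref{thrm2}; your Poincar\'e bound $|u_k|_2^2\le\la_1^{-1}\|u_k\|^2$ is useless there, and a generic $(PS)_c$ sequence carries no a priori orthogonality to $\mb D_r$, so ``linear-algebra bookkeeping'' cannot rescue it. The paper's key extra move is to take the combination $I(u_k)-\tfrac12\langle I'(u_k),u_k\rangle$, which cancels \emph{both} $\|u_k\|^2$ and $\la|u_k|_2^2$ and leaves only the Choquard term, giving the $\la$-free estimate
\[
\int_{\Om}\int_{\Om}\frac{|u_k(x)|^{2^*_{\mu,s}}|u_k(y)|^{2^*_{\mu,s}}}{|x-y|^{\mu}}\,\mathrm{d}x\,\mathrm{d}y \le C(1+\|u_k\|).
\]
Once this is in hand one can bound $|u_k|_2^2$ sublinearly in $\|u_k\|$ (for instance via the elementary lower bound $\int_\Om\int_\Om\frac{|u_k|^{2^*_{\mu,s}}|u_k|^{2^*_{\mu,s}}}{|x-y|^\mu}\ge(\mathrm{diam}\,\Om)^{-\mu}\big(\int_\Om|u_k|^{2^*_{\mu,s}}\big)^2$ together with H\"older/interpolation between $L^{2^*_{\mu,s}}$ and $L^{2^*_s}$), and then your identity $\theta(\|u_k\|^2-\la|u_k|_2^2)\le C(1+\|u_k\|)$ closes for every $\la$. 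The paper's own write-up of the second step is terse, but this $\la$-independent control of the convolution term is precisely the ingredient your proposal is missing.
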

\begin{proof}
Let $\{u_k\}$ be a Palais-Smale sequence of $I$ at $c \in \mb R^n$. We can assume $c\geq 0$ and by definition, there exist positive constants $C_1$ and $C_2$ such that
\[|I(u_k)| \leq C_1,\; \text{ and } \; |\langle I^{\prime}(u_k),\frac{u_k}{\|u_k\|} \rangle|\leq C_2.\]
We have
\begin{equation*}
\begin{split}
\frac12 \langle I^{\prime}(u_k),u_k \rangle = I(u_k)- \frac{n+2s-\mu}{2(2n-\mu)}\int_{\Om}\int_{\Om}\frac{|u_k(x)|^{2^*_{\mu,s}}|u_k(y)|^{2^*_{\mu,s}}}{|x-y|^{\mu}}~\mathrm{d}x\mathrm{d}y
\end{split}
\end{equation*}
which implies
\[\int_{\Om}\int_{\Om}\frac{|u_k(x)|^{2^*_{\mu,s}}|u_k(y)|^{2^*_{\mu,s}}}{|x-y|^{\mu}}~\mathrm{d}x\mathrm{d}y \leq C_2(1+\|u_k\|),\]
for some positive constant $C_2$. Also, we have
\begin{equation*}
\begin{split}
I(u_k)+ \frac12 \langle I^{\prime}(u_k),u_k \rangle &= \|u_k\|^2- \frac{3n-2s-\mu}{2(2n-\mu)}\int_{\Om}\int_{\Om}\frac{|u_k(x)|^{2^*_{\mu,s}}|u_k(y)|^{2^*_{\mu,s}}}{|x-y|^{\mu}}~\mathrm{d}x\mathrm{d}y\\
& \leq C_3(1+\|u_k\|),
\end{split}
\end{equation*}
for some positive constant $C_3$. This implies
\begin{equation*}
\begin{split}
\|u_k\|^2 & \leq \frac{3n-2s-\mu}{2(2n-\mu)}\int_{\Om}\int_{\Om}\frac{|u_k(x)|^{2^*_{\mu,s}}|u_k(y)|^{2^*_{\mu,s}}}{|x-y|^{\mu}}~\mathrm{d}x\mathrm{d}y
+C_3(1+\|u_k\|)\\
& \leq C_4(1+\|u_k\|),
\end{split}
\end{equation*}
for some positive constant $C_4$. Thus, we get $\{u_k\}$ to be a bounded sequence in $X_0$ which implies that there exist a subsequence and $u \in X_0$, still denoted by $u_k$. such that $u_k \rightharpoonup u$ in $X_0$ and also $u_k \rightharpoonup u$ in $L^{2^*_s}(\Om)$ as $k \rightarrow +\infty$. Then
\[|u_k|^{2^*_{\mu,s}} \rightharpoonup |u|^{2^*_{\mu,s}} \; \text{ in } L^{\frac{2n}{2n-\mu}}(\Om)\]
and
\[|u_k|^{2^*_{\mu,s}-2}u_k \rightharpoonup |u|^{2^*_{\mu,s}-2}u \; \text{ in } L^{\frac{2n}{n+2s-\mu}}(\Om)\]
as $k \rightarrow +\infty$.  The Reisz potential defines a continuous map from $L^{\frac{2n}{2n-\mu}}(\Om)$ to $L^{\frac{2n}{\mu}}(\Om)$, using Hardy-Littlewood-Sobolev inequality. This gives
\[\int_{\Om} \frac{|u_k(y)|^{2^*_{\mu,s}}}{|x-y|^{\mu}}~\mathrm{d}y \rightharpoonup \int_{\Om} \frac{|u(y)|^{2^*_{\mu,s}}}{|x-y|^{\mu}}~\mathrm{d}y \; \text{ in } L^{\frac{2n}{\mu}}(\Om)\]
as $k \rightarrow +\infty$. Combining all these, we get
\[\int_{\Om} \frac{|u_k(y)|^{2^*_{\mu,s}}|u_k(x)|^{2^*_{\mu,s}-2}u_k(x)}{|x-y|^{\mu}}~\mathrm{d}y \rightharpoonup \int_{\Om} \frac{|u(y)|^{2^*_{\mu,s}}|u(x)|^{2^*_{\mu,s}-2}u(x)}{|x-y|^{\mu}}~\mathrm{d}y \; \text{ in } L^{\frac{2n}{n+2s}}(\Om)\]
as $k \rightarrow +\infty$. Since $I^{\prime}(u_k) \rightarrow 0$ as $k \rightarrow +\infty$, for any $\varphi \in C_c^{\infty}(\Om)$, we get
\begin{equation*}
\begin{split}
&\lim\limits_{k\rightarrow +\infty} \left(\int_{Q}\frac{(u_k(x)-u_k(y))(\varphi(x)-\varphi(y))}{|x-y|^{n+2s}}~\mathrm{d}x\mathrm{d}y \right.\\
&\quad - \left.\int_{\Om}\int_{\Om}\frac{|u_k(x)|^{2^*_{\mu,s}}|u_k(y)|^{2^*_{\mu,s}-2}u_k(y)\varphi(y)}{|x-y|^{\mu}}~\mathrm{d}x\mathrm{d}y- \la \int_{\Om}u_k\varphi ~dx\right) = 0.
\end{split}
\end{equation*}
This gives
\begin{equation*}
\begin{split}
0=&\int_{Q}\frac{(u(x)-u(y))(\varphi(x)-\varphi(y))}{|x-y|^{n+2s}}~\mathrm{d}x\mathrm{d}y\\
& \quad -\int_{\Om}\int_{\Om}\frac{|u(x)|^{2^*_{\mu,s}}|u(y)|^{2^*_{\mu,s}-2}u(y)\varphi(y)}{|x-y|^{\mu}}~\mathrm{d}x\mathrm{d}y- \la \int_{\Om}u\varphi ~dx
\end{split}
\end{equation*}
for any $\varphi \in C_c^{\infty}(\infty)$. Thus, $u$ is a weak solution of $(P_\la)$.\hfill{\QED}
\end{proof}

\noi Let $u$ be the solution obtained in above lemma and we take $\varphi = u$ as the test function in $(P_\la)$, then we get
\[\|u\|^2 = \int_{\Om}\int_{\Om}\frac{|u(x)|^{2^*_{\mu,s}}|u(y)|^{2^*_{\mu,s}}}{|x-y|^{\mu}}~\mathrm{d}x\mathrm{d}y+ \la \int_{\Om}u^2~\mathrm{d}x.\]
So,
\begin{equation}\label{I-pos}
I(u)= \frac{n+2s-\mu}{2(2n-\mu)} \int_{\Om}\int_{\Om}\frac{|u(x)|^{2^*_{\mu,s}}|u(y)|^{2^*_{\mu,s}}}{|x-y|^{\mu}}~\mathrm{d}x\mathrm{d}y \geq 0.
\end{equation}

 \begin{Lemma}\label{PSlevel}
 Let $n>2s$, $0<\mu<n$ and $\{u_k\}$ be a $(PS)_c$ sequence of $I$ with
 \[c < \frac{n+2s-\mu}{2(2n-\mu)}(S^H_s)^{\frac{2n-\mu}{n+2s-\mu}}.\]
 Then $\{u_k\}$ has a convergent subsequence.
 \end{Lemma}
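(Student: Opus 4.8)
The plan is to argue by contradiction, using a Brezis--Lieb type decomposition below the first critical level. First I would invoke Lemma \ref{PSbdd}: the sequence $\{u_k\}$ is bounded in $X_0$, so along a subsequence (not relabelled) $u_k \rightharpoonup u$ in $X_0$, $u_k \to u$ strongly in $L^2(\Om)$ (by the compactness of $X_0 \hookrightarrow L^2(\Om)$) and $u_k \to u$ a.e.\ in $\mb R^n$; moreover $u$ is a weak solution of $(P_\la)$, so $I(u) \ge 0$ by \eqref{I-pos} and $\|u\|^2 = D(u) + \la\int_\Om u^2\,\mathrm{d}x$, where I abbreviate $D(w) := \int_{\Om}\int_{\Om}|w(x)|^{2^*_{\mu,s}}|w(y)|^{2^*_{\mu,s}}|x-y|^{-\mu}\,\mathrm{d}x\mathrm{d}y$. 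Setting $v_k := u_k - u$, so that $v_k \rightharpoonup 0$ in $X_0$, I would record three splittings: the Hilbert-space identity $\|u_k\|^2 = \|v_k\|^2 + \|u\|^2 + o(1)$; the $L^2$ identity $\int_\Om |u_k|^2\,\mathrm{d}x = \int_\Om |u|^2\,\mathrm{d}x + o(1)$; and, crucially, the Choquard splitting furnished by Lemma \ref{conv},
\[
D(u_k) = D(v_k) + D(u) + o(1).
\]

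Next I would substitute these into $I(u_k) \to c$ and also compute $\langle I'(u_k), u_k\rangle = \|u_k\|\,\langle I'(u_k), u_k/\|u_k\|\rangle \to 0$ (using boundedness of $\{u_k\}$ and $I'(u_k)\to 0$ in $X_0^*$), together with $\langle I'(u),u\rangle = 0$; after cancelling the terms involving $u$ this leaves
\[
\frac12\|v_k\|^2 - \frac{1}{2\cdot 2^*_{\mu,s}}D(v_k) = c - I(u) + o(1), \qquad \|v_k\|^2 - D(v_k) = o(1).
\]
Passing to a further subsequence I may assume $\|v_k\|^2 \to \ell \ge 0$; then $D(v_k) \to \ell$ as well, and since $\frac12 - \frac{1}{2\cdot 2^*_{\mu,s}} = \frac{n+2s-\mu}{2(2n-\mu)}$ I obtain
\[
c - I(u) = \frac{n+2s-\mu}{2(2n-\mu)}\,\ell.
\]

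Finally I would bring in the sharp constant. Since $S^H_s(\Om) = S^H_s$ (proved in the Lemma above), $D(w) \le (S^H_s)^{-2^*_{\mu,s}}\|w\|^{2\cdot 2^*_{\mu,s}}$ for all $w \in X_0$; applying this with $w = v_k$ and letting $k\to\infty$ gives $\ell \le (S^H_s)^{-2^*_{\mu,s}}\ell^{2^*_{\mu,s}}$. If $\ell > 0$, dividing by $\ell$ and using $2^*_{\mu,s} = \frac{2n-\mu}{n-2s}$ (so that $\frac{2^*_{\mu,s}}{2^*_{\mu,s}-1} = \frac{2n-\mu}{n+2s-\mu}$) forces $\ell \ge (S^H_s)^{\frac{2n-\mu}{n+2s-\mu}}$, and therefore, since $I(u)\ge 0$,
\[
c \ge c - I(u) = \frac{n+2s-\mu}{2(2n-\mu)}\,\ell \ge \frac{n+2s-\mu}{2(2n-\mu)}\,(S^H_s)^{\frac{2n-\mu}{n+2s-\mu}},
\]
contradicting the hypothesis on $c$. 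Hence $\ell = 0$, i.e.\ $\|u_k - u\| \to 0$, which is the desired strong convergence.

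The crux of the argument is the Brezis--Lieb decomposition of the nonlocal Choquard term, which is precisely Lemma \ref{conv}; once it and the elementary Hilbert-space and $L^2$ splittings are in hand, the conclusion is one line of algebra with the sharp constant $S^H_s$. The only bookkeeping that needs attention is the linear term $\la\int_\Om u^2$ (disposed of by strong $L^2$-convergence) and the two identities $\langle I'(u_k),u_k\rangle \to 0$ and $\langle I'(u),u\rangle = 0$ — the latter requiring the density of $C_c^\infty(\Om)$ in $X_0$ recorded in Section~2.
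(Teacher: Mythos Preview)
Your proof is correct and follows essentially the same route as the paper's own argument: invoke Lemma~\ref{PSbdd} to get boundedness and the weak limit $u$, apply the Brezis--Lieb/Hilbert-space splittings together with Lemma~\ref{conv} for the Choquard term, combine $I(u_k)\to c$ with $\langle I'(u_k),u_k\rangle\to 0$ and $\langle I'(u),u\rangle=0$ to isolate the profile of $v_k=u_k-u$, and then use the sharp constant $S^H_s$ to derive the dichotomy $\ell=0$ or $\ell\ge (S^H_s)^{\frac{2n-\mu}{n+2s-\mu}}$. The only cosmetic difference is that you obtain the exact identity $c-I(u)=\frac{n+2s-\mu}{2(2n-\mu)}\ell$ whereas the paper records the inequality $c\ge\frac{n+2s-\mu}{2(2n-\mu)}a$; both lead to the same contradiction.
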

\begin{proof}
Let $u$ be the weak limit of $\{u_k\}$ obtained using lemma \ref{PSbdd}. We set $w_k := u_k-u $, then $w_k \rightharpoonup 0$ in $X_0$ and $w_k \rightarrow 0$ a.e. in $\Om$ as $k \rightarrow +\infty$. By Brezis-Lieb Lemma, we have
\[\|u_k\|^2 = \|w_k\|^2 +\|u\|^2+ o_k(1),\;\; \text{and} \;\; |u_k|^2_2 = |w_k|^2_2+ |u|^2_2 + o_k(1).\]
Also, using Lemma \ref{conv}, we have
\begin{equation*}
\begin{split}
&\int_{\Om}\int_{\Om}\frac{|u_k(x)|^{2^*_{\mu,s}}|u_k(y)|^{2^*_{\mu,s}}}{|x-y|^{\mu}}~\mathrm{d}x\mathrm{d}y \\
& =  \int_{\Om}\int_{\Om}\frac{|w_k(x)|^{2^*_{\mu,s}}|w_k(y)|^{2^*_{\mu,s}}}{|x-y|^{\mu}}~\mathrm{d}x\mathrm{d}y +
\int_{\Om}\int_{\Om}\frac{|u(x)|^{2^*_{\mu,s}}|u(y)|^{2^*_{\mu,s}}}{|x-y|^{\mu}}~\mathrm{d}x\mathrm{d}y+o_k(1)
\end{split}
\end{equation*}
Since $I(u_k) \rightarrow c $ as $k \rightarrow +\infty$, we get

\begin{equation*}
\begin{split}
c =\lim\limits_{k \rightarrow +\infty} I(u_k)= \lim\limits_{k \rightarrow +\infty}\left(\frac{\|u_k\|^2}{2} - \frac{1}{22^*_{\mu,s}}\int_{\Om}\int_{\Om}\frac{|u_k(x)|^{2^*_{\mu,s}}|u_k(y)|^{2^*_{\mu,s}}}{|x-y|^{\mu}}~\mathrm{d}x\mathrm{d}y
-\frac{\la}{2}\int_{\Om}|u_k|^2\mathrm{d}x\right)
\end{split}
\end{equation*}
\begin{align}\label{PSlevel1}
& = \frac{\|w_k\|^2}{2}- \frac{\la}{2}\int_{\Om}w_k^2~\mathrm{d}x +\frac{\|u\|^2}{2}- \frac{\la}{2}\int_{\Om}u^2\mathrm{d}x\notag\\
& \quad - \frac{1}{22^*_{\mu,s}}\int_{\Om}\int_{\Om}\frac{|w_k(x)|^{2^*_{\mu,s}}|w_k(y)|^{2^*_{\mu,s}}}{|x-y|^{\mu}}~\mathrm{d}x\mathrm{d}y
-\frac{1}{22^*_{\mu,s}}\int_{\Om}\int_{\Om}\frac{|u(x)|^{2^*_{\mu,s}}|u(y)|^{2^*_{\mu,s}}}{|x-y|^{\mu}}~\mathrm{d}x\mathrm{d}y+o_k(1)\notag\\
& = I(u)+ \frac{\|w_k\|^2}{2}- \frac{\la}{2}\int_{\Om}w_k^2~\mathrm{d}x - \frac{1}{22^*_{\mu,s}}\int_{\Om}\int_{\Om}\frac{|w_k(x)|^{2^*_{\mu,s}}|w_k(y)|^{2^*_{\mu,s}}}{|x-y|^{\mu}}~\mathrm{d}x\mathrm{d}y + o_k(1)\notag\\
& \geq \frac{\|w_k\|^2}{2}- \frac{1}{22^*_{\mu,s}}\int_{\Om}\int_{\Om}\frac{|w_k(x)|^{2^*_{\mu,s}}|w_k(y)|^{2^*_{\mu,s}}}{|x-y|^{\mu}}~\mathrm{d}x\mathrm{d}y + o_k(1),
\end{align}
using \eqref{I-pos} and $\int_{\Om}w_k^2~\mathrm{d}x \rightarrow 0$ as $k \rightarrow +\infty$ (because $X_0\hookrightarrow L^{2}(\Om)$ compactly). In a similar manner, since $u$ is a weak solution of $(P_\la)$, $u$ must be a critical point of $I$ which gives $\langle I^{\prime}(u),u\rangle =0$ that is
\begin{align}\label{PSlevel2}
o_k(1)&= \|u_k\|^2 -\int_{\Om}\int_{\Om}\frac{|u_k(x)|^{2^*_{\mu,s}}|u_k(y)|^{2^*_{\mu,s}}}{|x-y|^{\mu}}~\mathrm{d}x\mathrm{d}y
-{\la}\int_{\Om}|u_k|^2\mathrm{d}x\notag\\
& = \|w_k\|^2-{\la}\int_{\Om}|w_k|^2\mathrm{d}x+ \|u\|^2-{\la}\int_{\Om}|u|^2\mathrm{d}x\notag\\
&\quad-\int_{\Om}\int_{\Om}\frac{|w_k(x)|^{2^*_{\mu,s}}|w_k(y)|^{2^*_{\mu,s}}}{|x-y|^{\mu}}~\mathrm{d}x\mathrm{d}y-
\int_{\Om}\int_{\Om}\frac{|u(x)|^{2^*_{\mu,s}}|u(y)|^{2^*_{\mu,s}}}{|x-y|^{\mu}}~\mathrm{d}x\mathrm{d}y+ o_k(1)\notag\\
& =\langle I^\prime(u),u\rangle + \|w_k\|^2-{\la}\int_{\Om}|w_k|^2\mathrm{d}x -\int_{\Om}\int_{\Om}\frac{|w_k(x)|^{2^*_{\mu,s}}|w_k(y)|^{2^*_{\mu,s}}}{|x-y|^{\mu}}~\mathrm{d}x\mathrm{d}y +o_k(1)\notag \\
& = \|w_k\|^2 -\int_{\Om}\int_{\Om}\frac{|w_k(x)|^{2^*_{\mu,s}}|w_k(y)|^{2^*_{\mu,s}}}{|x-y|^{\mu}}~\mathrm{d}x\mathrm{d}y +o_k(1).
\end{align}
This implies
\[\lim\limits_{k \rightarrow +\infty}\|w_k\|^2 = \lim\limits_{k \rightarrow +\infty}\int_{\Om}\int_{\Om}\frac{|w_k(x)|^{2^*_{\mu,s}}|w_k(y)|^{2^*_{\mu,s}}}{|x-y|^{\mu}}~\mathrm{d}x\mathrm{d}y = a, \]
where $a$ is nonnegative constant. From \eqref{PSlevel1} and \eqref{PSlevel2}, we deduce
\[c \geq \frac{n+2s-\mu}{2(2n-\mu)}a. \]
Using definition of $S^H_s$, we get
\[S^H_s \left(\int_{\Om}\int_{\Om}\frac{|w_k(x)|^{2^*_{\mu,s}}|w_k(y)|^{2^*_{\mu,s}}}{|x-y|^{\mu}}~\mathrm{d}x\mathrm{d}y  \right)^{\frac{n-2s}{2n-\mu}} \leq \|w_k\|^2,\]
which gives $a \geq S^H_s a^{\frac{n-2s}{2n-\mu}}$. Thus, either $a=0$ or $a \geq (S^H_s)^{\frac{2n-\mu}{n+2s-\mu}}$. If $a=0$, we are done, else $a \geq (S^H_s)^{\frac{2n-\mu}{n+2s-\mu}}$ gives
\[\frac{n+2s-\mu}{2(2n-\mu)}(S^H_s)^{\frac{2n-\mu}{n+2s-\mu}} \leq c.\]
This contradicts the hypothesis that
\[c < \frac{n+2s-\mu}{2(2n-\mu)}(S^H_s)^{\frac{2n-\mu}{n+2s-\mu}}.\]
Thus, $a=0$ which implies $\|u_k-u\| \rightarrow 0$ as $k \rightarrow +\infty$.\hfill{\QED}
\end{proof}

\section{Proof of Theorem \ref{thrm1}}
We fix $n \geq 4s$ and $\Om$ be a smooth bounded domain in $\mb R^n$. We divide the proof of \ref{thrm1}  considering two cases.
\subsection{Case (1):  $\la \in (0,\la_1)$}
Without loss of generality, we assume $0 \in \Om$ and fix $\delta>0$ such that $B_{4\delta}\subset \Om$. Let $\eta \in C^{\infty}(\mb R^n)$ be such that $0\leq \eta \leq 1$ in $\mb R^n$, $\eta \equiv 1$ in $B_{\delta}$ and $\eta \equiv 0$ in $\mb R^n \setminus B_{2\delta}$. For $\epsilon >0$, we denote by $u_{\epsilon}$ the following function
\[u_\epsilon(x) = \eta(x)U_{\epsilon}(x),\]
for $x \in \mb R^n$, where $U_\epsilon$ is defined in section 2. We have the following results for $u_\epsilon$ using Proposition $21$ and $22$ of \cite{bn-serv}.
\begin{Proposition}\label{estimates}
Let $s\in(0,1)$ and $n>2s$. Then, the following estimates holds true as $\epsilon \rightarrow 0 $ \\
\begin{enumerate}
\item[(i)] $\ds \int_{\mb R^n}\frac{|u_\epsilon(x)- u_\epsilon(y)|^2}{|x-y|^{n+2s}}~\mathrm{d}x\mathrm{d}y = S_s^{n/(2s)}+o(\epsilon^{n-2s})$,\\
\item[(ii)] $\ds \int_{\Om}|u_\epsilon|^{2^*_s}~\mathrm{d}x = S_s^{n/(2s)}+ o(\epsilon^n)$,\\
\item[(iii)] $$\int_{\Om}|u_{\epsilon}(x)|^2~\mathrm{d}x=
\left\{
	\begin{array}{ll}
		C_s\epsilon^{2s}+o(\epsilon^{n-2s}) & \mbox{if } n>4s \\
		C_s\epsilon^{2s}|\log \epsilon|+o(\epsilon^{2s}) & \mbox{if } n=4s \\
        C_s\epsilon^{n-2s}+o(\epsilon^{2s}) & \mbox{if } n<4s
	\end{array}
\right.,$$
\end{enumerate}
for some positive constant $C_s$, depending on $s$.
\end{Proposition}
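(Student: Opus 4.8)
Since $u_\epsilon=\eta U_\epsilon$ is built from the rescaled extremal $U_\epsilon$, the plan is to reduce all three estimates to essentially one-dimensional integrals involving $U_\epsilon$, using the two exact identities recorded in Section~2 — namely $\int_{Q}\frac{|U_\epsilon(x)-U_\epsilon(y)|^2}{|x-y|^{n+2s}}\,\mathrm dx\mathrm dy=\int_{\mb R^n}|U_\epsilon|^{2^*_s}\,\mathrm dx=S_s^{n/(2s)}$ — together with the scaling $U_\epsilon(x)=\epsilon^{-(n-2s)/2}u^*(x/\epsilon)$, the radial symmetry of $u^*$, and its decay $u^*(x)\sim c\,|x|^{-(n-2s)}$ as $|x|\to\infty$ (read off from the explicit form $u^*(x)=\bar u(x/S_s^{1/(2s)})$, $\tilde u(x)=\al(\ba^2+|x|^2)^{-(n-2s)/2}$). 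This is exactly the computation carried out in Propositions~21 and 22 of \cite{bn-serv}; I would recall that argument and indicate the book-keeping needed here.

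\textbf{Items (i) and (ii).} Write $u_\epsilon=U_\epsilon-w_\epsilon$ with $w_\epsilon:=(1-\eta)U_\epsilon$, which vanishes on $B_\delta$ and coincides with $U_\epsilon$ outside $B_{2\delta}$. For (i), expand the Gagliardo seminorm: $\|u_\epsilon\|^2=\|U_\epsilon\|^2-2\,\mathcal B(U_\epsilon,w_\epsilon)+\|w_\epsilon\|^2$, where $\mathcal B$ is the bilinear form associated with $\|\cdot\|$. Since $U_\epsilon$ solves $(-\De)^sU_\epsilon=|U_\epsilon|^{2^*_s-2}U_\epsilon$ in $\mb R^n$, testing against $w_\epsilon$ gives $\mathcal B(U_\epsilon,w_\epsilon)=\int_{\mb R^n}(1-\eta)|U_\epsilon|^{2^*_s}\,\mathrm dx$, and since $1-\eta$ is supported in $\{|x|\ge\delta\}$, where $|U_\epsilon(x)|^{2^*_s}\sim\epsilon^n|x|^{-2n}$, this is $O(\epsilon^n)$. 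The term $\|w_\epsilon\|^2$ is the genuine truncation error; splitting the double integral according to the location of the two points (both inside $B_{2\delta}$, both outside, or split) and inserting the pointwise bound $|U_\epsilon(x)|\lesssim\epsilon^{(n-2s)/2}|x|^{-(n-2s)}$ on $\{|x|\gtrsim\epsilon\}$ together with the Lipschitz bound on $\eta$ yields $\|w_\epsilon\|^2=O(\epsilon^{n-2s})$, whence the claimed expansion; a finer threshold splitting of the near-diagonal region refines the remainder. Item (ii) is easier: $\int_\Om|u_\epsilon|^{2^*_s}=\int_{\mb R^n}|U_\epsilon|^{2^*_s}-\int_{\mb R^n}(1-\eta^{2^*_s})|U_\epsilon|^{2^*_s}$, and the last integral is controlled by $\int_{|x|\ge\delta}|U_\epsilon|^{2^*_s}\approx\epsilon^n\int_{|x|\ge\delta}|x|^{-2n}\,\mathrm dx=O(\epsilon^n)$.

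\textbf{Item (iii).} After the substitution $x=\epsilon y$ one has $\int_\Om|u_\epsilon|^2\,\mathrm dx=\epsilon^{2s}\int_{\mb R^n}\eta(\epsilon y)^2|u^*(y)|^2\,\mathrm dy$, with the integrand supported in $|y|\le 2\delta/\epsilon$. Using $|u^*(y)|^2\sim c\,|y|^{-2(n-2s)}$ for large $|y|$ and $\int_{1\le|y|\le R}|y|^{-2(n-2s)}\,\mathrm dy\approx\int_1^R r^{4s-n-1}\,\mathrm dr$, the three regimes follow from the sign of the exponent $4s-n$: when $n>4s$ the $y$-integral converges and one gets $C_s\epsilon^{2s}$; when $n=4s$ it diverges like $\log(1/\epsilon)$, giving $C_s\epsilon^{2s}|\log\epsilon|$; when $2s<n<4s$ it diverges like $R^{4s-n}\sim\epsilon^{n-4s}$, giving $C_s\epsilon^{n-2s}$. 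One then has to check that the errors coming from (a) replacing $u^*$ by its asymptotic profile, (b) the cutoff $\eta(\epsilon\cdot)$, and (c) the inner ball $|y|\le 1$ are of the stated lower order in each regime.

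\textbf{Main obstacle.} Everything is bookkeeping with the explicit profile and the scaling; the only genuinely delicate point is the sharp control of the remainder terms — in particular, correctly isolating the logarithmic factor in the borderline dimension $n=4s$ and matching the exact orders asserted in (i) and (iii). This requires a careful dyadic or threshold decomposition of the integration domains (and use of the precise rate at which $u^*(y)-c|y|^{-(n-2s)}\to 0$), rather than crude bounds. Since these estimates are essentially Propositions~21 and 22 of \cite{bn-serv}, I would invoke them for the main terms and only spell out the small additional analysis needed for the case distinction $n\gtrless 4s$.
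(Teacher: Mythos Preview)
Your proposal is correct and aligns with the paper's treatment: the paper does not give a proof of this proposition at all but simply records it as a consequence of Propositions~21 and~22 of \cite{bn-serv}. You correctly identify that reference as the source and outline the same scaling-and-truncation computation carried out there, so there is nothing to compare.
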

\noi Using \eqref{relation}, Proposition \ref{estimates}(i) can be written as
\begin{equation}\label{esti-new}
\int_{\mb R^n}\frac{|u_\epsilon(x)- u_\epsilon(y)|^2}{|x-y|^{n+2s}}~\mathrm{d}x\mathrm{d}y \leq S_s^{n/(2s)}+o(\epsilon^{n-2s})=(C(n,\mu))^{\frac{n-2s}{2n-\mu}}(S^H_s)^{\frac{n}{2s}}+o(\epsilon^{n-2s}).
\end{equation}
We now prove the following proposition in the spirit of section 3 of \cite{myang}.
\begin{Proposition}\label{estimates1}
The following estimates holds true:
\begin{equation*}
\left(\int_{\Om}\int_{\Om}\frac{|u_\epsilon(x)|^{2^*_{\mu,s}}|u_\epsilon(y)|^{2^*_{\mu,s}}}{|x-y|^{\mu}}~\mathrm{d}x\mathrm{d}y \right)^{\frac{n-2s}{2n-\mu}}\leq (C(n,\mu))^{\frac{n(n-2s)}{2s(2n-\mu)}} (S^H_s)^{\frac{n-2s}{2}}+ o(\epsilon^{n-2s}),
\end{equation*}
and
\begin{equation*}
\left(\int_{\Om}\int_{\Om}\frac{|u_\epsilon(x)|^{2^*_{\mu,s}}|u_\epsilon(y)|^{2^*_{\mu,s}}}{|x-y|^{\mu}}~\mathrm{d}x\mathrm{d}y \right)^{\frac{n-2s}{2n-\mu}}\geq \left((C(n,\mu))^{\frac{n}{2s}} (S^H_s)^{\frac{2n-\mu}{2}}+ o(\epsilon^{n})\right)^{\frac{n-2s}{2n-\mu}}.
\end{equation*}
\end{Proposition}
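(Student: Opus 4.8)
The plan is to prove the two one‑sided bounds separately, each by reducing to the Hardy--Littlewood--Sobolev inequality \eqref{new1}. Three facts are used throughout. Since $0\le\eta\le1$, $\eta\equiv1$ on $B_\delta$ and $\eta\equiv0$ outside $B_{2\delta}$, we have $0\le u_\epsilon\le U_\epsilon$ pointwise, $u_\epsilon\equiv U_\epsilon$ on $B_\delta$, and the support of $u_\epsilon$ lies in $B_{2\delta}\subset\Om$. Next, $U_\epsilon$ is, up to a dilation and a normalisation, the explicit profile $\tilde u(x)=\alpha(\beta^2+|x|^2)^{-(n-2s)/2}$, which is an extremal for $S_s$ and hence also for \eqref{new1}; consequently \emph{equality} holds in \eqref{new1} for $u=U_\epsilon$, and $\int_{\mb R^n}|U_\epsilon|^{2^*_s}\,\mathrm{d}x=S_s^{n/(2s)}$. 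Finally, the identity \eqref{relation}, $S^H_s=S_s/C(n,\mu)^{1/2^*_{\mu,s}}$, is used at the end of each computation to convert powers of $S_s$ into powers of $S^H_s$ and to account for the powers of $C(n,\mu)$ in the statement.

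For the upper bound, since $u_\epsilon$ is supported in $\Om$, raising \eqref{new1} to the power $2^*_{\mu,s}$ and using $2\cdot2^*_{\mu,s}/2^*_s=(2n-\mu)/n$ gives
\[\int_{\Om}\int_{\Om}\frac{|u_\epsilon(x)|^{2^*_{\mu,s}}|u_\epsilon(y)|^{2^*_{\mu,s}}}{|x-y|^{\mu}}\,\mathrm{d}x\mathrm{d}y\ \le\ C(n,\mu)\Big(\int_{\Om}|u_\epsilon|^{2^*_s}\,\mathrm{d}x\Big)^{\frac{2n-\mu}{n}}.\]
Now insert Proposition \ref{estimates}(ii), $\int_\Om|u_\epsilon|^{2^*_s}\,\mathrm{d}x=S_s^{n/(2s)}+o(\epsilon^n)$. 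Because $(2n-\mu)/n>1$ and the leading term is a fixed positive constant, a first‑order expansion of $t\mapsto t^{(2n-\mu)/n}$ about $S_s^{n/(2s)}$, followed after raising to the power $(n-2s)/(2n-\mu)$ by the analogous expansion for $t\mapsto t^{(n-2s)/(2n-\mu)}$, leaves $(C(n,\mu))^{(n-2s)/(2n-\mu)}S_s^{(n-2s)/(2s)}$ up to an $o$‑remainder. Rewriting $S_s$ via \eqref{relation} and adding the exponents of $C(n,\mu)$, $\tfrac{n-2s}{2n-\mu}+\tfrac{n-2s}{2s}\cdot\tfrac{n-2s}{2n-\mu}=\tfrac{n(n-2s)}{2s(2n-\mu)}$, yields the first estimate, the remainder being absorbed into $o(\epsilon^{n-2s})$.

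For the lower bound, using $u_\epsilon=U_\epsilon$ on $B_\delta$,
\[\int_{\Om}\int_{\Om}\frac{|u_\epsilon(x)|^{2^*_{\mu,s}}|u_\epsilon(y)|^{2^*_{\mu,s}}}{|x-y|^{\mu}}\,\mathrm{d}x\mathrm{d}y\ \ge\ \int_{\mb R^n}\int_{\mb R^n}\frac{|U_\epsilon(x)|^{2^*_{\mu,s}}|U_\epsilon(y)|^{2^*_{\mu,s}}}{|x-y|^{\mu}}\,\mathrm{d}x\mathrm{d}y\ -\ \mc R_\epsilon,\]
where $\mc R_\epsilon$ is the integral over the region on which at least one of the two variables lies outside $B_\delta$. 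The full $\mb R^n$‑integral is evaluated exactly: equality in \eqref{new1} together with $\int_{\mb R^n}|U_\epsilon|^{2^*_s}\,\mathrm{d}x=S_s^{n/(2s)}$ gives $C(n,\mu)\big(S_s^{n/(2s)}\big)^{(2n-\mu)/n}$, which by \eqref{relation} is the leading term displayed in the statement. For $\mc R_\epsilon$ I would apply Hardy--Littlewood--Sobolev once more, after symmetrising in the two variables, to the product of $|U_\epsilon|^{2^*_{\mu,s}}$ with $|U_\epsilon|^{2^*_{\mu,s}}$ restricted to $\mb R^n\setminus B_\delta$, using conjugate exponents $t=r=2n/(2n-\mu)$; this bounds $\mc R_\epsilon$ by a constant multiple of $\big(\int_{\mb R^n\setminus B_\delta}|U_\epsilon|^{2^*_s}\,\mathrm{d}x\big)^{(2n-\mu)/(2n)}$, and the decay estimate $U_\epsilon(x)\le C\epsilon^{(n-2s)/2}|x|^{-(n-2s)}$ for $|x|\ge\delta$ shows that this quantity tends to $0$ as $\epsilon\to0$, hence is negligible beside the leading constant. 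Raising the resulting inequality to the power $(n-2s)/(2n-\mu)$ gives the second estimate. The only delicate point is this tail bound for $\mc R_\epsilon$, i.e. extracting the correct order of the contribution from $\mb R^n\setminus B_\delta$; everything else is routine bookkeeping with \eqref{new1} and \eqref{relation}.
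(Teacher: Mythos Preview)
Your upper bound is identical to the paper's: Hardy--Littlewood--Sobolev, then Proposition~\ref{estimates}(ii), then \eqref{relation}. The lower bound also follows the paper's overall scheme: restrict to $B_\delta\times B_\delta$, write the integral over $\mb R^n\times\mb R^n$ and subtract the remainder, and evaluate the full-space integral exactly via the extremality of $U_\epsilon$.

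The gap is precisely the point you flag as delicate. Applying Hardy--Littlewood--Sobolev to the tail gives
\[
\mc R_\epsilon\ \le\ 2\,C(n,\mu)\Big(\int_{\mb R^n}|U_\epsilon|^{2^*_s}\Big)^{\frac{2n-\mu}{2n}}\Big(\int_{\mb R^n\setminus B_\delta}|U_\epsilon|^{2^*_s}\Big)^{\frac{2n-\mu}{2n}}.
\]
The first factor is a fixed constant, and $\int_{\mb R^n\setminus B_\delta}|U_\epsilon|^{2^*_s}=O(\epsilon^{n})$, so your bound yields only $\mc R_\epsilon=O\big(\epsilon^{(2n-\mu)/2}\big)$. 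Since $\mu>0$ one has $(2n-\mu)/2<n$, so this is \emph{not} $o(\epsilon^{n})$ and does not establish the second inequality as stated. The loss comes from the H\"older/HLS step, which wastes the smallness of the tail factor by pairing it with the full $L^{2n/(2n-\mu)}$-norm of $|U_\epsilon|^{2^*_{\mu,s}}$.

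The paper avoids this loss by abandoning HLS on the tail and using direct pointwise bounds instead: for $x\in\mb R^n\setminus B_\delta$ and $y\in B_\delta$ one has $|x-y|\ge |x|-\delta$, and after inserting the explicit profile for $U_\epsilon$ the double integral factorises into a convergent $x$-integral times $\epsilon^{2n-\mu}\int_{B_\delta}(\epsilon^2+|y|^2)^{-(2n-\mu)/2}\,\mathrm{d}y$, which after the rescaling $y=\epsilon z$ is $O(\epsilon^{n})$. The same kind of direct estimate handles $\int_{B_\delta^c}\int_{B_\delta^c}$. This is what is needed to recover the $o(\epsilon^{n})$ in the statement; your HLS shortcut gives a correct but strictly weaker remainder.
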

\begin{proof}
By Hardy-Littlewood-Sobolev inequality, Proposition \ref{estimates}(ii) and \ref{relation}, we get
\begin{equation*}
\begin{split}
&\left(\int_{\Om}\int_{\Om}\frac{|u_\epsilon(x)|^{2^*_{\mu,s}}|u_\epsilon(y)|^{2^*_{\mu,s}}}{|x-y|^{\mu}}~\mathrm{d}x\mathrm{d}y \right)^{\frac{n-2s}{2n-\mu}}\\
&\leq (C(n,\mu))^{\frac{n-2s}{2n-\mu}} |u_\epsilon|^2_{2^*_s} =(C(n,\mu))^{\frac{n-2s}{2n-\mu}}\left(S_s^{n/(2s)}+ o(\epsilon^n)\right)^{\frac{n-2s}{n}}\\
&= (C(n,\mu))^{\frac{n-2s}{2n-\mu}}\left( (C(n,\mu))^{\frac{n(n-2s)}{2s(2n-\mu)}}(S^H_s)^{\frac{n}{2s}}+ o(\epsilon^n)\right)^{\frac{n-2s}{n}}\\
&=(C(n,\mu))^{\frac{n(n-2s)}{2s(2n-\mu)}} (S^H_s)^{\frac{n-2s}{2s}}+ o(\epsilon^{n-2s}).
\end{split}
\end{equation*}
Next, we consider
\begin{equation}\label{har-esti3}
\begin{split}
&\int_{\Om}\int_{\Om}\frac{|u_\epsilon(x)|^{2^*_{\mu,s}}|u_\epsilon(y)|^{2^*_{\mu,s}}}{|x-y|^{\mu}}~\mathrm{d}x\mathrm{d}y \\
 &\geq \int_{B_{\delta}}\int_{B_{\delta}}\frac{|u_\epsilon(x)|^{2^*_{\mu,s}}|u_\epsilon(y)|^{2^*_{\mu,s}}}{|x-y|^{\mu}}~\mathrm{d}x\mathrm{d}y
 =\int_{B_{\delta}}\int_{B_{\delta}}\frac{|U_\epsilon(x)|^{2^*_{\mu,s}}|U_\epsilon(y)|^{2^*_{\mu,s}}}{|x-y|^{\mu}}~\mathrm{d}x\mathrm{d}y\\
& = \int_{\mb R^n}\int_{\mb R^n}\frac{|U_\epsilon(x)|^{2^*_{\mu,s}}|U_\epsilon(y)|^{2^*_{\mu,s}}}{|x-y|^{\mu}}~\mathrm{d}x\mathrm{d}y -2 \int_{\mb R^n \setminus B_{\delta}}\int_{B_{\delta}}\frac{|U_\epsilon(x)|^{2^*_{\mu,s}}|U_\epsilon(y)|^{2^*_{\mu,s}}}{|x-y|^{\mu}}~\mathrm{d}x\mathrm{d}y\\
& \quad \quad- \int_{\mb R^n \setminus B_{\delta}}\int_{\mb R^n \setminus B_{\delta}}\frac{|U_\epsilon(x)|^{2^*_{\mu,s}}|U_\epsilon(y)|^{2^*_{\mu,s}}}{|x-y|^{\mu}}~\mathrm{d}x\mathrm{d}y.
\end{split}
\end{equation}
We estimate the integrals in R.H.S. of \eqref{har-esti3} separately. Firstly, consider
\begin{equation}\label{har-esti4}
\int_{\mb R^n}\int_{\mb R^n}\frac{|U_\epsilon(x)|^{2^*_{\mu,s}}|U_\epsilon(y)|^{2^*_{\mu,s}}}{|x-y|^{\mu}}~\mathrm{d}x\mathrm{d}y
= \left( \frac{\|U_\epsilon\|^2}{S^H_s}\right)^{\frac{2n-\mu}{n-2s}}= (C(n,\mu))^{\frac{n}{(2s)}}(S^H_s)^{\frac{2n-\mu}{2s}}.
\end{equation}
Secondly, consider
\begin{equation*}
\begin{split}
&\int_{\mb R^n \setminus B_{\delta}}\int_{B_{\delta}}\frac{|U_\epsilon(x)|^{2^*_{\mu,s}}|U_\epsilon(y)|^{2^*_{\mu,s}}}{|x-y|^{\mu}}~\mathrm{d}x\mathrm{d}y\\
&\leq C_{1,s}\int_{\mb R^n \setminus B_{\delta}}\int_{B_{\delta}} \frac{\epsilon^{\mu-2n}}{|x-y|^{\mu}\left( 1+ |\frac{x}{\epsilon}|^2 \right)^{\frac{2n-\mu}{2}}\left( 1+ |\frac{y}{\epsilon}|^2 \right)^{\frac{2n-\mu}{2}}}~\mathrm{d}x\mathrm{d}y\\
& = \epsilon^{2n-\mu}C_{2,s}\int_{\mb R^n \setminus B_{\delta}}\int_{B_{\delta}} \frac{1}{|x-y|^{\mu}\left( \epsilon^2+ |x|^2 \right)^{\frac{2n-\mu}{2}}\left( \epsilon^2+ |y|^2 \right)^{\frac{2n-\mu}{2}}}~\mathrm{d}x\mathrm{d}y\\
& \leq \epsilon^{2n-\mu}C_{2,s} \int_{\mb R^n \setminus B_{\delta}} \frac{1}{|x|^{2n-\mu}(|x|-\delta)^\mu}\mathrm{d}x \int_{ B_{\delta}} \frac{1}{(\epsilon^2+|y|^2)^{\frac{2n-\mu}{2}}}\mathrm{d}y
\end{split}
\end{equation*}
\begin{equation}\label{har-esti5}
\begin{split}
& = o(\epsilon^n) \int_{0}^{\delta/\epsilon}\frac{t^{n-1}}{(1+t^2)^{\frac{2n-\mu}{2}}}~\mathrm{d}t \leq o(\epsilon^n) \int_{0}^{+\infty}\frac{t^{n-1}}{(1+t^2)^{\frac{2n-\mu}{2}}}~\mathrm{d}t= o(\epsilon^n),
\end{split}
\end{equation}
where $C_{1,s}, C_{2,s}$ are appropriate positive constants. Lastly, in a similar manner we have

\begin{equation*}
\begin{split}
&\int_{\mb R^n \setminus B_{\delta}}\int_{\mb R^n \setminus B_{\delta}}\frac{|U_\epsilon(x)|^{2^*_{\mu,s}}|U_\epsilon(y)|^{2^*_{\mu,s}}}{|x-y|^{\mu}}~\mathrm{d}x\mathrm{d}y\\
&\leq C_{1,s}\int_{\mb R^n \setminus B_{\delta}}\int_{\mb R^n\setminus B_{\delta}} \frac{\epsilon^{\mu-2n}}{|x-y|^{\mu}\left( 1+ |\frac{x}{\epsilon}|^2 \right)^{\frac{2n-\mu}{2}}\left( 1+ |\frac{y}{\epsilon}|^2 \right)^{\frac{2n-\mu}{2}}}~\mathrm{d}x\mathrm{d}y
\end{split}
\end{equation*}
\begin{equation}\label{har-esti6}
\begin{split}
& = \epsilon^{2n-\mu}C_{2,s}\int_{\mb R^n \setminus B_{\delta}}\int_{\mb R^n \setminus B_{\delta}} \frac{1}{|x-y|^{\mu}\left( \epsilon^2+ |x|^2 \right)^{\frac{2n-\mu}{2}}\left( \epsilon^2+ |y|^2 \right)^{\frac{2n-\mu}{2}}}~\mathrm{d}x\mathrm{d}y\\
& \leq \epsilon^{2n-\mu}C_{2,s} \int_{\mb R^n \setminus B_{\delta}} \int_{ B_{\delta}} \frac{1}{|x-y|^\mu|x|^{2n-\mu}|y|^{2n-\mu}}\mathrm{d}x\mathrm{d}y = o(\epsilon^{2n-\mu}).
\end{split}
\end{equation}
Using  \eqref{har-esti4}, \eqref{har-esti5} and \eqref{har-esti6} in \eqref{har-esti3}, we get
 \begin{equation}\label{har-esti7}
\left( \int_{\Om}\int_{\Om}\frac{|u_\epsilon(x)|^{2^*_{\mu,s}}|u_\epsilon(y)|^{2^*_{\mu,s}}}{|x-y|^{\mu}}~\mathrm{d}x\mathrm{d}y \right)^{\frac{n-2s}{2n-\mu}} \geq \left( (C(n,\mu))^{\frac{n}{2s}}(S^H_s)^{\frac{2n-\mu}{2s}}- o(\epsilon^n)\right)^{\frac{n-2s}{2n-\mu}}.
 \end{equation}
 This completes the proof.\hfill{\QED}
\end{proof}
\begin{Remark}\label{rmrk}
\eqref{har-esti7} and \eqref{esti-new} still holds when {$2s<n<4s$}.
\end{Remark}
We prove the existence of solution to $(P_\la)$ using an invariant of mountain pass lemma.

\begin{Lemma}\label{MGP}
If $n>2s$ and $\la \in (0,\la_1)$, then the energy functional $I$ satisfies the following properties:
\begin{enumerate}
\item[(i)]there exist $\beta,\rho>0$ such that $I(u) \geq \beta$ when $\|u\|=\rho$,
\item[(ii)] there exist $\tilde{u} \in X_0$ such that $\|\tilde{u}\| > \rho$ and $I(\tilde{u}) <0$.
\end{enumerate}
\end{Lemma}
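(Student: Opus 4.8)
The plan is to verify the two mountain--pass conditions by elementary estimates, controlling the linear perturbation $\frac{\la}{2}\int_{\Om}u^2\,\mathrm{d}x$ through the variational characterization of $\la_1$, and the Choquard double integral through the Hardy--Littlewood--Sobolev inequality \eqref{new1} together with the embedding $X_0\hookrightarrow L^{2^*_s}(\Om)$.

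For part (i) I would begin from $\int_{\Om}u^2\,\mathrm{d}x \le \la_1^{-1}\|u\|^2$ (the definition of $\la_1$ as the first eigenvalue), so that $\la<\la_1$ yields $\frac12\|u\|^2 - \frac{\la}{2}\int_{\Om}u^2\,\mathrm{d}x \ge \frac12\left(1-\frac{\la}{\la_1}\right)\|u\|^2$. Combining \eqref{new1} with $|u|_{2^*_s}^2 \le S_s^{-1}\|u\|^2$ bounds the Choquard term by $C(n,\mu)S_s^{-2^*_{\mu,s}}\|u\|^{2\cdot 2^*_{\mu,s}}$. Since $\mu<n$ and $n>2s$ force $2^*_{\mu,s}>1$, this term is superquadratic, and one obtains $I(u) \ge a\|u\|^2 - b\|u\|^{2\cdot 2^*_{\mu,s}}$ with constants $a=\frac12(1-\la/\la_1)>0$ and $b=C(n,\mu)S_s^{-2^*_{\mu,s}}/(2\cdot 2^*_{\mu,s})>0$. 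Choosing $\rho>0$ small enough that $a - b\rho^{2\cdot 2^*_{\mu,s}-2}>0$ and setting $\beta := \rho^2\left(a - b\rho^{2\cdot 2^*_{\mu,s}-2}\right)$ gives (i).

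For part (ii) I would fix any $w\in X_0\setminus\{0\}$, conveniently with $w\ge0$ (which is useful for later positivity statements); positivity of the Riesz kernel $|x-y|^{-\mu}$ makes the double integral $\int_{\Om}\int_{\Om}|w(x)|^{2^*_{\mu,s}}|w(y)|^{2^*_{\mu,s}}|x-y|^{-\mu}\,\mathrm{d}x\,\mathrm{d}y$ strictly positive. Then
\[I(tw) = \frac{t^2}{2}\left(\|w\|^2 - \la\int_{\Om}w^2\,\mathrm{d}x\right) - \frac{t^{2\cdot 2^*_{\mu,s}}}{2\cdot 2^*_{\mu,s}}\int_{\Om}\int_{\Om}\frac{|w(x)|^{2^*_{\mu,s}}|w(y)|^{2^*_{\mu,s}}}{|x-y|^{\mu}}\,\mathrm{d}x\,\mathrm{d}y,\]
and since $2\cdot 2^*_{\mu,s}>2$ we get $I(tw)\to-\infty$ as $t\to+\infty$; picking $t_0$ large enough that $\|t_0 w\|>\rho$ and $I(t_0w)<0$, one takes $\tilde u := t_0 w$.

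I do not expect a genuine obstacle here: the only points requiring attention are the arithmetic showing $2\cdot 2^*_{\mu,s}>2$ — which is what makes the Choquard nonlinearity negligible near the origin and dominant along any ray — and the use of $\la<\la_1$ to keep the quadratic form positive definite. With these two facts, the ``cup'' near $0$ in (i) and the descent to negative values in (ii) are both immediate.
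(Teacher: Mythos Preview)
Your proof is correct and follows essentially the same approach as the paper: for (i) you use the variational characterization of $\la_1$ together with the Hardy--Littlewood--Sobolev inequality and the Sobolev embedding to obtain $I(u)\ge a\|u\|^2 - b\|u\|^{2\cdot 2^*_{\mu,s}}$, and for (ii) you evaluate $I$ along a ray $t\mapsto tw$ and use $2\cdot 2^*_{\mu,s}>2$; this is exactly what the paper does, with your version being slightly more explicit about the constants.
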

\begin{proof}\begin{enumerate}
\item[(i)] Since $\la \in (0,\la_1)$, using Sobolev embedding and Hardy-Littlewood-Sobolev inequality, we get
\begin{equation*}
\begin{split}
I(u)& \geq \frac12 \left(1-\frac{\la}{\la_1}\right)\|u\|^2 - \frac{1}{2 2^{*}_{\mu,s}}C_1|u|^{\frac{2(2n-\mu)}{n-2s}}\\
&\geq \frac12 \left(1-\frac{\la}{\la_1}\right)\|u\|^2 - \frac{1}{2 2^{*}_{\mu,s}}C_1C_2 \|u\|^{\frac{2(2n-\mu)}{n-2s}},
\end{split}
\end{equation*}
for all $u \in X_0 \setminus \{0\}$, where $C_1,C_2$ are positive constants. Since $0<\mu<n$, so $2< 2\left(\frac{2n-\mu}{n-2s}\right)$. Thus, some $\beta, \rho>0$ can be chosen such that $I(u)\geq \beta$ when $\|u\|= \rho$.
\item[(ii)] Fix $u_0 \in X_0\setminus \{0\}$, since $I(tu_0) \rightarrow -\infty$ as $t \rightarrow \infty$ we get
\[I(tu_0)=  \frac{t^2\|u_0\|^2}{2} - \frac{t^{22^*_{\mu,s}}}{22^*_{\mu,s}}\int_{\Om}\int_{\Om}\frac{|u_0(x)|^{2^*_{\mu,s}}|u_0(y)|^{2^*_{\mu,s}}}{|x-y|^{\mu}}~\mathrm{d}x\mathrm{d}y
-\frac{\la t^2}{2}\int_{\Om}|u_0|^2\mathrm{d}x <0\]
for sufficiently large $t>0$. This implies, we can obtain $\tilde{u}= t_0u_0 \in X_0$ for some $t_0>0$ such that $\|\tilde{u}\| > \rho$ and $I(\tilde{u}) <0$.\hfill{\QED}
\end{enumerate}
\end{proof}
\begin{Proposition}\cite{myang}\label{MP}
Using lemma \ref{MGP} and the mountain pass lemma without $(PS)$ condition \cite{Willem}, there exists a $(PS)$ sequence $\{u_k\}$ such that $I(u_k) \rightarrow c$ and $I^{\prime}(u_k) \rightarrow 0$ in $X_0^*$ (dual of $X_0$) at the minimax level
\[c^* = \inf_{\gamma \in \Gamma} \max_{t \in [0,1]} I(\gamma(t)) >0,\]
where
\[\Gamma :=  \{\gamma \in C([0,1],X_0): \; \gamma(0)=0, I(\gamma(1))<0\}.\]
\end{Proposition}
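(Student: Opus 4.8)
The plan is to read off the mountain pass geometry from Lemma \ref{MGP} and then invoke the version of the mountain pass theorem that does not require the Palais--Smale condition (as in \cite{Willem}, following \cite{myang}). First I would record that $\Gamma\neq\emptyset$: with $\tilde u$ as in Lemma \ref{MGP}(ii), the segment $\gamma_0(t):=t\tilde u$ lies in $C([0,1],X_0)$, has $\gamma_0(0)=0$ and $I(\gamma_0(1))=I(\tilde u)<0$. Hence $c^*\le\max_{t\in[0,1]}I(t\tilde u)<\infty$, the maximum being finite because $I$ is continuous on the compact segment $\{t\tilde u:t\in[0,1]\}$.

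Next I would check that $c^*$ is strictly positive, which is the whole point of the threshold $\beta$. From the lower bound obtained inside the proof of Lemma \ref{MGP}(i), namely $I(u)\ge\tfrac{1}{2}(1-\la/\la_1)\|u\|^2-C\|u\|^{2\cdot 2^*_{\mu,s}}$ with $2\cdot 2^*_{\mu,s}>2$, one may shrink $\rho$ if necessary so that $I\ge 0$ on the ball $\{\|u\|\le\rho\}$. Then for any $\gamma\in\Gamma$ we have $\|\gamma(0)\|=0<\rho$ while $I(\gamma(1))<0$ forces $\|\gamma(1)\|>\rho$; since $t\mapsto\|\gamma(t)\|$ is continuous, the intermediate value theorem gives some $t_0\in(0,1)$ with $\|\gamma(t_0)\|=\rho$, whence $\max_{t}I(\gamma(t))\ge I(\gamma(t_0))\ge\beta$ by Lemma \ref{MGP}(i). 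Taking the infimum over $\gamma\in\Gamma$ yields $c^*\ge\beta>0$.

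The remaining, and only substantial, step is the abstract mountain pass principle without compactness, applied to $I\in C^1(X_0,\R)$. I would argue by contradiction: if there were no $(PS)_{c^*}$ sequence, then there would exist $\varepsilon,\delta>0$ with $\|I'(u)\|_{X_0^*}\ge\delta$ throughout the set $\{u:|I(u)-c^*|\le 2\varepsilon\}$, and the quantitative deformation lemma would produce a continuous map $\eta:[0,1]\times X_0\to X_0$ with $\eta(1,\cdot)$ sending $\{I\le c^*+\varepsilon\}$ into $\{I\le c^*-\varepsilon\}$ and fixing every point where $|I-c^*|\ge 2\varepsilon$ --- in particular fixing $\gamma(0)$ and $\gamma(1)$ for any $\gamma\in\Gamma$, since $I(\gamma(0))=0$ and $I(\gamma(1))<0$ both lie at distance $>2\varepsilon$ from $c^*>0$ once $\varepsilon$ is small. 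Composing $\eta(1,\cdot)$ with a path $\gamma\in\Gamma$ chosen so that $\max_t I(\gamma(t))\le c^*+\varepsilon$ would give a new path in $\Gamma$ with maximal energy $\le c^*-\varepsilon$, contradicting the definition of $c^*$ as the infimum. (Equivalently, one applies Ekeland's variational principle to $\Phi(\gamma)=\max_t I(\gamma(t))$ on the complete metric space $(\Gamma,d)$ with $d(\gamma_1,\gamma_2)=\max_t\|\gamma_1(t)-\gamma_2(t)\|$, and extracts the sequence from a nearly optimal path.) This deformation (or Ekeland) argument is the main obstacle; since it is purely abstract and standard, I would simply invoke \cite{Willem}, exactly as done in \cite{myang}, rather than reproduce it here.
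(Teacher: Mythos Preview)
Your argument is correct and is exactly the standard route: verify $\Gamma\neq\emptyset$ via $\gamma_0(t)=t\tilde u$, show $c^*\ge\beta>0$ by the sphere-crossing argument from Lemma~\ref{MGP}(i), and then invoke the deformation lemma (or Ekeland's principle) to produce the $(PS)_{c^*}$ sequence. The paper itself gives no proof for this proposition --- it simply cites \cite{myang} and \cite{Willem} --- so your write-up is a faithful elaboration of the omitted details rather than a different approach.
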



\noi \textbf{Proof of Theorem \ref{thrm1}: ($n\geq 4s$, $\la \in (0,\la_1)$)}\\
 Before proving this theorem, we claim that there exist $w \in X_0\setminus \{0\}$ such that
\begin{equation}\label{elem1}
\frac{\|w\|^2 - \la \int_{\Om}|w|^2\mathrm{d}x}{\left(\int_{\Om}\int_{\Om}\frac{|w(x)|^{2^*_{\mu,s}}|w(y)|^{2^*_{\mu,s}}}{|x-y|^{\mu}}~\mathrm{d}x\mathrm{d}y \right)^{\frac{n-2s}{2n-\mu}}} < S^H_s
\end{equation}
If $n=4s$, using Proposition \ref{estimates}(iii), \eqref{esti-new} and \eqref{har-esti7}, we get
\begin{equation}\label{elem2}
\begin{split}
\frac{\|u_\epsilon\|^2 - \la \int_{\Om}|u_\epsilon|^2\mathrm{d}x}{\left(\int_{\Om}\int_{\Om}\frac{|u_\epsilon(x)|^{2^*_{\mu,s}}|u_\epsilon(y)|^{2^*_{\mu,s}}}{|x-y|^{\mu}}~\mathrm{d}x\mathrm{d}y \right)^{\frac{2s}{8s-\mu}}} & \leq \frac{(C(4s,\mu))^{\frac{4s}{8s-\mu}}(S^H_s)^{2s}- \la C_s \epsilon^{2s}|\log \epsilon| + o(\epsilon^{2s})}{\left((C(4s,\mu))^2(S^H_s)^{\frac{8s-\mu}{2s}}- o(\epsilon^{4s}) \right)^{\frac{2s}{8s-\mu}}}\\
& \leq S^H_s - \la C_s \epsilon^{2s}|\log \epsilon| + o(\epsilon^{2s}) < S^H_s.
\end{split}
\end{equation}
If $n>4s$ then again using Proposition \ref{estimates}(iii), \eqref{esti-new} and \eqref{har-esti7}, we get
\begin{equation}\label{elem3}
\begin{split}
\frac{\|u_\epsilon\|^2 - \la \int_{\Om}|u_\epsilon|^2\mathrm{d}x}{\left(\int_{\Om}\int_{\Om}\frac{|u_\epsilon(x)|^{2^*_{\mu,s}}|u_\epsilon(y)|^{2^*_{\mu,s}}}{|x-y|^{\mu}}~\mathrm{d}x\mathrm{d}y \right)^{\frac{n-2s}{2n-\mu}}}
 & \leq \frac{(C(n,\mu))^{\frac{n(n-2s)}{2s(2n-\mu)}}(S^H_s)^{\frac{n}{2s}}- \la C_s \epsilon^{2s} + o(\epsilon^{n-2s})}{\left((C(n,\mu))^{\frac{n}{2s}}(S^H_s)^{\frac{2n-\mu}{2s}}- o(\epsilon^{n}) \right)^{\frac{n-2s}{2n-\mu}}}\\
& \leq S^H_s - \la C_s \epsilon^{2s}+ o(\epsilon^{n-2s}) < S^H_s.
\end{split}
\end{equation}
So \eqref{elem1} holds true if we take $w=u_\epsilon$. We have
\begin{equation*}
\begin{split}
&\max_{t\geq 0} \left( \frac{t^2\|w\|^2}{2} - \frac{t^{22^*_{\mu,s}}}{22^*_{\mu,s}}\int_{\Om}\int_{\Om}\frac{|w(x)|^{2^*_{\mu,s}}|w(y)|^{2^*_{\mu,s}}}{|x-y|^{\mu}}~\mathrm{d}x\mathrm{d}y
-\frac{\la t^2}{2}\int_{\Om}w^2\mathrm{d}x \right)\\
& = \frac{n+2s-\mu}{2(2n-\mu)}\left( \frac{\|w\|^2 - \la \int_{\Om}w^2\mathrm{d}x}{\left( \int_{\Om}\int_{\Om}\frac{|w(x)|^{2^*_{\mu,s}}|w(y)|^{2^*_{\mu,s}}}{|x-y|^{\mu}}~\mathrm{d}x\mathrm{d}y \right)^{\frac{n-2s}{2n-\mu}}}  \right)^{\frac{2n-\mu}{n+2s-\mu}} < \frac{n+2s-\mu}{2(2n-\mu)} (S^H_s)^{\frac{2n-\mu}{n+2s-\mu}}.
\end{split}
\end{equation*}
This implies
\[0 < \max_{t \geq 0}I(tw) < \frac{n+2s-\mu}{2(2n-\mu)} (S^H_s)^{\frac{2n-\mu}{n+2s-\mu}}.\]

From the definition $c^*$, we can say that $c^* < \frac{n+2s-\mu}{2(2n-\mu)} (S^H_s)^{\frac{2n-\mu}{n+2s-\mu}} $. Then, there exist a $(PS)$ sequence, say $\{u_k\}$ at $c^*$, using Proposition \ref{MP}. We know $\{u_k\}$ has a convergent subsequence, using Lemma \ref{PSlevel} and thus, $I$ has a critical value $c^* \in \left(0, \frac{n+2s-\mu}{2(2n-\mu)} (S^H_s)^{\frac{2n-\mu}{n+2s-\mu}} \right)$ which gives a nontrivial solution for $(P_\la)$.\hfill{\QED}

\subsection{Case (2):  $\la \geq \la_1$}
Let us consider the sequence of eigenvalues of the operator $(-\De)^s$ with homogenous Dirichlet boundary condition in $\mb R^n$, denoted by
\[0 < \la_1 < \la_2 \leq \la_3 \leq \ldots \leq \la_{j} \leq \la_{j+1}\leq \ldots\]
and $\{e_j\}_{j \in \mb N} \subset L^{\infty}(\Om)$ be the  corresponding sequence of eigenfunctions. We also consider this sequence of $e_j$'s to form an orthonormal basis of $X_0$. \\
In this case, without loss of generality, we can assume $\la \in [\la_r, \la_{r+1})$ for some $r \in \mb N$ and $e_r$ denote the eigenfunction corresponding to $\la_r$. We define
\[\mb M_{r+1} := \{u \in X_0: \langle u,e_i \rangle = \int_{Q}\frac{(u(x)-u(y))(e_i(x)-e_i(y))}{|x-y|^{n+2s}}\mathrm{d}x\mathrm{d}y =0,\; i= 1,2,\ldots,r\},\]
and
\[\mb D_{r} := \text{span} \{e_1,e_2,\ldots, e_r\}.\]
Clearly, $\mb D_r$ is finite dimensional and $\mb D_r \oplus \mb M_{r+1} = X_0$.

\begin{Lemma}\label{Link}
Let $n>2s$ and $\la \in [\la_r, \la_{r+1})$ for some $r \in \mb N$. Then the energy functional $I$ satisfies the following properties :
\begin{enumerate}
\item[(i)] There exists $\beta, \rho >0$ such that $I(u) \geq \beta$, for any $u \in \mb M_{r+1}$ with $\|u\|=\rho$.
\item[(ii)] If $u \in \mb D_{r}$, then $I(u)<0$.
\item[(iii)] If $\mb E$ is any finite dimensional subspace of $X_0$, then there exists $R> \rho$ such that for any $u \in \mb E$ with $\|u\| \geq R$, we have $I(u) \leq 0$.
\end{enumerate}
\end{Lemma}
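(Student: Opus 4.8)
The structure is the classical linking geometry argument (à la Rabinowitz), adapted to the decomposition $X_0 = \mb D_r \oplus \mb M_{r+1}$ coming from the spectral theory of $(-\De)^s$. The three items are proved separately, and the main tool is the variational characterization of the eigenvalues: for $u \in \mb M_{r+1}$ one has $\|u\|^2 \geq \la_{r+1}\int_\Om |u|^2\,\mathrm{d}x$, while for $u \in \mb D_r$ one has $\|u\|^2 \leq \la_r \int_\Om |u|^2\,\mathrm{d}x$. These two Poincaré-type inequalities are what make the quadratic part of $I$ positive-definite on $\mb M_{r+1}$ and negative-semidefinite on $\mb D_r$.

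For (i), I would start from $u \in \mb M_{r+1}$ and use $\|u\|^2 - \la\int_\Om|u|^2 \geq \left(1 - \tfrac{\la}{\la_{r+1}}\right)\|u\|^2$, which is a strictly positive multiple of $\|u\|^2$ since $\la < \la_{r+1}$. Then, exactly as in Lemma~\ref{MGP}(i), the Choquard term is controlled by Hardy-Littlewood-Sobolev and the Sobolev embedding $X_0 \hookrightarrow L^{2^*_s}(\Om)$, giving
\[
I(u) \geq \tfrac12\left(1-\tfrac{\la}{\la_{r+1}}\right)\|u\|^2 - C\|u\|^{\frac{2(2n-\mu)}{n-2s}},
\]
and since $\frac{2(2n-\mu)}{n-2s} > 2$ (because $0 < \mu < n$), one picks $\rho > 0$ small and $\beta > 0$ accordingly. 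For (ii), take $u \in \mb D_r$; then $\|u\|^2 - \la\int_\Om |u|^2 \leq \left(1 - \tfrac{\la}{\la_r}\right)\|u\|^2 \leq 0$ since $\la \geq \la_r$, so $I(u) \leq -\frac{1}{22^*_{\mu,s}}\int_\Om\int_\Om \frac{|u(x)|^{2^*_{\mu,s}}|u(y)|^{2^*_{\mu,s}}}{|x-y|^\mu}\,\mathrm{d}x\mathrm{d}y \leq 0$; to get the strict inequality $I(u) < 0$ for $u \neq 0$ one notes the double integral is strictly positive whenever $u \not\equiv 0$ (it is a norm-like quantity by Hardy-Littlewood-Sobolev, vanishing only at $u = 0$). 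One should also handle $u = 0$ — strictly, $I(0) = 0$, so the statement is really meant for $u \in \mb D_r \setminus \{0\}$, or one phrases it as $I \le 0$ on $\mb D_r$. For (iii), on a finite-dimensional subspace $\mb E$ all norms are equivalent, so there is $c_{\mb E} > 0$ with $\int_\Om\int_\Om \frac{|u(x)|^{2^*_{\mu,s}}|u(y)|^{2^*_{\mu,s}}}{|x-y|^\mu}\,\mathrm{d}x\mathrm{d}y \geq c_{\mb E}\|u\|^{22^*_{\mu,s}}$ on $\mb E$; then
\[
I(u) \leq \tfrac12\|u\|^2 - \tfrac{c_{\mb E}}{22^*_{\mu,s}}\|u\|^{22^*_{\mu,s}} + \tfrac{|\la|}{2}C\|u\|^2,
\]
and since $22^*_{\mu,s} = \frac{2(2n-\mu)}{n-2s} > 2$, the right-hand side tends to $-\infty$ as $\|u\| \to \infty$, so a radius $R > \rho$ exists.

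The only genuinely delicate point is the equivalence-of-norms constant $c_{\mb E}$ in (iii): because the Choquard functional is not a power of a norm, one must argue that $u \mapsto \left(\int_\Om\int_\Om \frac{|u(x)|^{2^*_{\mu,s}}|u(y)|^{2^*_{\mu,s}}}{|x-y|^\mu}\,\mathrm{d}x\mathrm{d}y\right)^{1/(22^*_{\mu,s})}$ is a continuous, positively $1$-homogeneous function on $\mb E$ that is strictly positive off the origin, hence bounded below by a positive constant on the unit sphere of $\mb E$ (compact, as $\dim\mb E < \infty$); this yields $c_{\mb E}$. Everything else is a routine repetition of the estimates already used in Lemma~\ref{MGP}, now localized to the subspaces $\mb M_{r+1}$ and $\mb D_r$ via the min-max characterization of $\la_r, \la_{r+1}$.
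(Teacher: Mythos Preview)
Your proof is correct and follows essentially the same linking-geometry approach as the paper: the variational characterization of $\la_r,\la_{r+1}$ for (i)--(ii), and finite-dimensionality for (iii). Your treatment of (iii) via equivalence of norms on $\mb E$ is in fact more rigorous than the paper's, which only checks $I(tv_i)<0$ along each basis direction and then asserts the same for all $u\in\mb E$ without justification; your remark that (ii) should be read as $I(u)<0$ for $u\in\mb D_r\setminus\{0\}$ (since $I(0)=0$) is also a valid refinement the paper glosses over.
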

\begin{proof}
\begin{enumerate}
\item[(i)] Since $\la \in [\la_r, \la_{r+1})$, using Sobolev embedding and Hardy-Littlewood-Sobolev inequality, we get
\begin{equation*}
\begin{split}
I(u)& \geq \frac12 \left(1-\frac{\la}{\la_{r+1}}\right)\|u\|^2 - \frac{1}{2 2^{*}_{\mu,s}}C_1|u|^{\frac{2(2n-\mu)}{n-2s}}_{2^*_s}\\
&\geq \frac12 \left(1-\frac{\la}{\la_{r+1}}\right)\|u\|^2 - \frac{1}{2 2^{*}_{\mu,s}}C_1C_2 \|u\|^{\frac{2(2n-\mu)}{n-2s}},
\end{split}
\end{equation*}
for all $u \in \mb M_{r+1} \setminus \{0\}$, where $C_1,C_2$ are positive constants. Since $0<\mu<n$, so $2< 2\left(\frac{2n-\mu}{n-2s}\right)$ and thus, some $\beta, \rho>0$ can be chosen such that $I(u)\geq \beta$ for $\|u\|= \rho$.
\item[(ii)] Let $u \in \mb D_{r}$, then there exists $a_i \in \mb R$ such that $u = \sum_{i=1}^{r} a_i e_i$. Since $e_j$'s form an orthonormal basis of $X_0$  and $L^2(\Om)$, we get
    \[\int_{\Om}u^2 ~\mathrm{d}x= \sum_{i=1}^{r}a_i^2\; \text{and} \; \|u\|^2 =\sum_{i=1}^{r}a_i^2\|e_i\|^2. \]
    This implies
    \begin{equation*}
    \begin{split}
    I(u)&= \frac12 \sum_{i=1}^{r}a_i^2(\|e_i\|^2-\la) -  \frac{1}{2 2^{*}_{\mu,s}} \int_{\Om}\int_{\Om}\frac{|u(x)|^{2^*_{\mu,s}}|u(y)|^{2^*_{\mu,s}}}{|x-y|^{\mu}}~\mathrm{d}x\mathrm{d}y\\
    & < \frac12 \sum_{i=1}^{r}a_i^2(\la_i-\la)\leq 0,
    \end{split}
    \end{equation*}
    because $\la \in [\la_r, \la_{r+1})$.
    \item[(iii)] We can assume $\mb E = \text{span} \{v_1,v_2,\ldots,v_k\}$. So, for every $v_i$, there exists a $t_i>0$ such that
    $I(tv_i) <0$, whenever $t > t_i$. Let $\hat{t}= \max\{t_1,t_2,\ldots,t_k\}$, then $I(tu)<0$ whenever $t>\hat{t}$ and $u \in \mb E$. Therefore, there exists $R> \rho$ such that for any $u \in \mb E$ with $\|u\| \geq R$, we have $I(u) \leq 0$.\hfill{\QED}
\end{enumerate}
\end{proof}
Now, we prove the fractional version of Lemma 4.2 of \cite{myang} following the same.
\begin{Lemma}\label{equiv-norm}
Let $n>2s$ and $\Om$ be a bounded domain in $\mb R^n$. Then
\[\|u\|_0 := \int_{\Om}\int_{\Om}\frac{|u(x)|^{2^*_{\mu,s}}|u(y)|^{2^*_{\mu,s}}}{|x-y|^{\mu}}~\mathrm{d}x\mathrm{d}y\]
for $u \in L^{2^*_s}(\Om)$, defines an equivalent norm on $L^{2^*_s}(\Om)$.
\end{Lemma}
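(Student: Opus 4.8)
\noi Strictly speaking, to obtain a norm one should replace the displayed expression by its $(2\cdot 2^*_{\mu,s})$-th root; write $\|u\|_\star:=\|u\|_0^{1/(2\cdot 2^*_{\mu,s})}$. The plan is first to check that $\|\cdot\|_\star$ satisfies the norm axioms on $L^{2^*_s}(\Om)$, and then to sandwich it between Lebesgue norms on $\Om$ --- the Hardy--Littlewood--Sobolev inequality supplying the upper bound and the boundedness of $\Om$ the lower one.

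\noi Definiteness of $\|\cdot\|_\star$ is immediate because the kernel $|x-y|^{-\mu}$ is strictly positive off the diagonal of $\Om\times\Om$, and positive homogeneity is clear from the exponents. The substantive point --- and the step I expect to be the main obstacle --- is the triangle inequality. Here I would work with the symmetric bilinear form
\[ B(f,g):=\int_\Om\int_\Om\frac{f(x)\,g(y)}{|x-y|^{\mu}}\,\mathrm{d}x\,\mathrm{d}y , \]
which is positive semidefinite: the Riesz kernel $|x-y|^{-\mu}$, $0<\mu<n$, is of positive type, since $|z|^{-\mu}=c_{\mu}\int_0^{\infty}t^{\mu/2-1}e^{-t|z|^{2}}\,\mathrm{d}t$ exhibits it as a superposition of the positive-definite Gaussian kernels $e^{-t|x-y|^{2}}$ (equivalently, the Fourier transform of $|x|^{-\mu}$ is a positive multiple of $|\xi|^{\mu-n}$). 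Cauchy--Schwarz for $B$ then gives $B(f,g)\le B(f,f)^{1/2}B(g,g)^{1/2}$, so $\mc{D}(f):=B(f,f)^{1/2}$ is positively homogeneous and subadditive on nonnegative functions, and it is monotone (if $0\le f\le h$ then $\mc{D}(f)\le\mc{D}(h)$) since the kernel is nonnegative. Put $q:=2^*_{\mu,s}$ (note $q\ge 1$ because $\mu<n+2s$), $\alpha:=\|u\|_\star=\mc{D}(|u|^{q})^{1/q}$, $\beta:=\|v\|_\star$; assuming $\alpha+\beta>0$ and setting $\lambda:=\alpha/(\alpha+\beta)$, convexity of $t\mapsto t^{q}$ yields the pointwise estimate
\[ |u+v|^{q}\le(|u|+|v|)^{q}=(\alpha+\beta)^{q}\Big(\lambda\,(|u|^{q}\alpha^{-q})^{1/q}+(1-\lambda)\,(|v|^{q}\beta^{-q})^{1/q}\Big)^{q}\le(\alpha+\beta)^{q}\Big(\lambda\,|u|^{q}\alpha^{-q}+(1-\lambda)\,|v|^{q}\beta^{-q}\Big). \]
Applying $\mc{D}$ --- monotone, subadditive and homogeneous --- together with $\mc{D}(|u|^{q}\alpha^{-q})=\mc{D}(|v|^{q}\beta^{-q})=1$ gives $\mc{D}(|u+v|^{q})\le(\alpha+\beta)^{q}$, that is, $\|u+v\|_\star=\mc{D}(|u+v|^{q})^{1/q}\le\alpha+\beta=\|u\|_\star+\|v\|_\star$.

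\noi For the equivalence, the upper estimate is exactly the Hardy--Littlewood--Sobolev inequality recalled before the lemma: $\|u\|_0\le C(n,\mu)\,|u|_{2^*_s}^{2\cdot 2^*_{\mu,s}}$, hence $\|u\|_\star\le C(n,\mu)^{1/(2\cdot 2^*_{\mu,s})}\,|u|_{2^*_s}$. For the lower estimate, set $d:=\mathrm{diam}\,\Om<\infty$; then $|x-y|\le d$ for $x,y\in\Om$, so
\[ \|u\|_0=\int_\Om\int_\Om\frac{|u(x)|^{2^*_{\mu,s}}|u(y)|^{2^*_{\mu,s}}}{|x-y|^{\mu}}\,\mathrm{d}x\,\mathrm{d}y\ \ge\ d^{-\mu}\Big(\int_\Om|u|^{2^*_{\mu,s}}\,\mathrm{d}x\Big)^{2}=d^{-\mu}\,|u|_{2^*_{\mu,s}}^{2\cdot 2^*_{\mu,s}}, \]
i.e.\ $\|u\|_\star\ge d^{-\mu/(2\cdot 2^*_{\mu,s})}\,|u|_{2^*_{\mu,s}}$. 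These two estimates control $\|\cdot\|_\star$ from above and below by Lebesgue norms on the bounded set $\Om$, which is the asserted equivalence (in particular, on any finite-dimensional subspace of $X_0$, where all norms are comparable, $\|\cdot\|_\star$ is equivalent to $|\cdot|_{2^*_s}$). Apart from the triangle inequality every step above is a short computation, so the real difficulty is to exploit the positive-definiteness of the Riesz form $B$.
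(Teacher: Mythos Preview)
Your triangle-inequality argument, via positive-definiteness of the Riesz kernel, Cauchy--Schwarz for the bilinear form $B$, and convexity of $t\mapsto t^{q}$, is correct and takes a genuinely different route from the paper. The paper instead rewrites the double integral using the semigroup property of the Riesz potential as $\int_\Om\int_\Om|w(x)|^{2\cdot 2^*_{\mu,s}}|x-y|^{-\mu}\,\mathrm{d}x\,\mathrm{d}y$, splits $|u+v|^{2\cdot 2^*_{\mu,s}}\le(|u|+|v|)|u+v|^{2\cdot 2^*_{\mu,s}-1}$, and applies H\"older's inequality inside this weighted double integral. Your approach is cleaner conceptually but needs the positive-definiteness input; the paper's uses only H\"older plus the semigroup identity.

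The equivalence step, however, has a real gap. Your diameter bound yields $\|u\|_\star\ge c\,|u|_{2^*_{\mu,s}}$, while HLS gives $\|u\|_\star\le C\,|u|_{2^*_s}$. Since $2^*_{\mu,s}<2^*_s$, on a bounded domain one only has $|u|_{2^*_{\mu,s}}\le C'|u|_{2^*_s}$ and not the reverse, so your two bounds do \emph{not} sandwich $\|\cdot\|_\star$ between constant multiples of $|\cdot|_{2^*_s}$; what you have proved is strictly weaker than the stated equivalence (your parenthetical about finite-dimensional subspaces shows you sensed this). The paper does not attempt a direct lower bound at all: instead it asserts that $\|\cdot\|_\star$ makes the space complete (``proof can be sketched using the techniques to prove $L^p(\Om)$ is a Banach space'') and then invokes the open mapping theorem on the continuous identity map coming from the one-sided HLS bound to force the inverse to be bounded. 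A pointwise kernel estimate of the sort you use cannot produce $|u|_{2^*_s}\le C\|u\|_\star$; the completeness/open-mapping mechanism is precisely what the paper substitutes for the missing lower bound, and that is the step you would need to supply to close your argument.
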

\begin{proof}
Let $u,v \in L^{2^*_s}(\Om)$, then using H\"{o}lder inequality and semigroup property of Reisz potential, we get

\begin{equation*}
\begin{split}
&\int_{\Om}\int_{\Om}\frac{|(u+v)(x)|^{2^*_{\mu,s}}|(u+v)(y)|^{2^*_{\mu,s}}}{|x-y|^{\mu}}~\mathrm{d}x\mathrm{d}y = \int_{\Om}\int_{\Om}\frac{|(u+v)(x)|^{2 2^*_{\mu,s}}}{|x-y|^{\mu}}~\mathrm{d}x\mathrm{d}y\\
& \leq  \int_{\Om}\int_{\Om}\frac{|u(x)||(u+v)(x)|^{2 2^*_{\mu,s}-1}}{|x-y|^{\mu}}~\mathrm{d}x\mathrm{d}y + \int_{\Om}\int_{\Om}\frac{|v(x)||(u+v)(x)|^{2 2^*_{\mu,s}-1}}{|x-y|^{\mu}}~\mathrm{d}x\mathrm{d}y
\end{split}
\end{equation*}
\begin{equation*}
\begin{split}
& \leq \left(\int_{\Om}\int_{\Om}\frac{|u(x)|^{2 2^*_{\mu,s}}}{|x-y|^{\mu}}~\mathrm{d}x\mathrm{d}y\right)^{\frac{1}{22^{*}_{\mu,s}}}\left(\int_{\Om}\int_{\Om}\frac{|(u+v)(x)|^{2 2^*_{\mu,s}}}{|x-y|^{\mu}}~\mathrm{d}x\mathrm{d}y\right)^{1-\frac{1}{22^{*}_{\mu,s}}}\\
& \quad +\left(\int_{\Om}\int_{\Om}\frac{|v(x)|^{2 2^*_{\mu,s}}}{|x-y|^{\mu}}~\mathrm{d}x\mathrm{d}y\right)^{\frac{1}{22^{*}_{\mu,s}}}\left(\int_{\Om}\int_{\Om}\frac{|(u+v)(x)|^{2 2^*_{\mu,s}}}{|x-y|^{\mu}}~\mathrm{d}x\mathrm{d}y\right)^{1-\frac{1}{22^{*}_{\mu,s}}}\\
& = \left(\int_{\Om}\int_{\Om}\frac{|(u+v)(x)|^{2 2^*_{\mu,s}}}{|x-y|^{\mu}}~\mathrm{d}x\mathrm{d}y\right)^{1-\frac{1}{22^{*}_{\mu,s}}}\\
&\quad \quad \times \left(\left(\int_{\Om}\int_{\Om}\frac{|u(x)|^{2 2^*_{\mu,s}}}{|x-y|^{\mu}}~\mathrm{d}x\mathrm{d}y\right)^{\frac{1}{22^{*}_{\mu,s}}} + \left(\int_{\Om}\int_{\Om}\frac{|v(x)|^{2 2^*_{\mu,s}}}{|x-y|^{\mu}}~\mathrm{d}x\mathrm{d}y\right)^{\frac{1}{22^{*}_{\mu,s}}} \right).
\end{split}
\end{equation*}
Therefore, we get $\|u+v\|_0 \leq \|u\|_0 + \|v\|_0$ and other properties of norm are also satisfied by $\|\cdot\|_0$. So, $\|\cdot\|_0$ is a norm on $L^{2^*_s}(\Om)$ and $L^{2^*_s}(\Om)$ is a Banach space under this norm(proof can be sketched using the techniques to prove $L^p(\Om)$ is a Banach space with the usual $L^p$-norm). By Hardy-Littlewood-Sobolev inequality, we have
\[ \|u\|_0 \leq (C(n,\mu))^{\frac{1}{22^*_{\mu,s}}}|u|_{2^*}. \]
So, the identity map from $(L^{2^*_{\mu,s}}(\Om), \|\cdot\|_0)$ to $(L^{2^*_{\mu,s}}(\Om), |\cdot|_{2^*_{\mu,s}})$ is linear and bounded. Thus, by open mapping theorem, we obtain $\|\cdot\|_0$ is equivalent norm to the standard norm $|\cdot|_{2^*_{\mu,s}}$ on $L^{2^*_{\mu,s}}(\Om)$.\hfill{\QED}
\end{proof}

\noi Before proceeding further, we define the linear space
\[\mb G_{r,\epsilon} := \text{span} \{e_1,e_2, \ldots, e_r, u_\epsilon\}\]
and set
\[g_{r, \epsilon} :=  \max_{u \in M} \left( \|u\|^2- \la \int_{\Om}|u|^2~\mathrm{d}x \right),\]
where $ M = \{u \in \mb G_{r,\epsilon} : \int_{\Om}\int_{\Om}\frac{|u(x)|^{2^*_{\mu,s}}|u(y)|^{2^*_{\mu,s}}}{|x-y|^{\mu}}~\mathrm{d}x\mathrm{d}y =1\}$ and $u_\epsilon$ (from \eqref{elem1}) is such that
\begin{equation*}
\frac{\|u_\epsilon\|^2 - \la \int_{\Om}|u_\epsilon|^2\mathrm{d}x}{\left(\int_{\Om}\int_{\Om}\frac{|u_\epsilon(x)|^{2^*_{\mu,s}}|u_\epsilon(y)|^{2^*_{\mu,s}}}{|x-y|^{\mu}}~\mathrm{d}x\mathrm{d}y \right)^{\frac{n-2s}{2n-\mu}}} < S^H_s.
\end{equation*}
\begin{Lemma}\label{g-esti}
Let $n\geq 2s$ and $\la \in [\la_r,\la_{r+1})$ for some $r \in \mb N$, then the following holds true:
\begin{enumerate}
\item[(i)] There exist $u_g \in \mb G_{r,\epsilon}$ such that $g_{r,\epsilon}$ is achieved at $u_g$ and
\[u_g = w + tu_\epsilon\]
with $w \in \mb D_r$ and $t \geq 0$.
\item[(ii)] As $\epsilon \rightarrow 0$, we have
$$g_{r,\epsilon}=
\left\{
	\begin{array}{ll}
		(\la_r-\la)|w|^2_2 & \mbox{if } t=0 \\
		(\la_r-\la)|w|^2_2+ F_\epsilon(1+|w|_2 o(\epsilon^{\frac{n-2s}{2}}))+ o(\epsilon^{\frac{n-2s}{2}})|w|_2 & \mbox{if } t>0,
	\end{array}
\right.$$
where $w $ is defined in (i) and $F_\epsilon$ is given by
\[F_\epsilon = \frac{\|u_\epsilon\|^2 - \la \int_{\Om}|u_\epsilon|^2\mathrm{d}x}{\left(\int_{\Om}\int_{\Om}\frac{|u_\epsilon(x)|^{2^*_{\mu,s}}|u_\epsilon(y)|^{2^*_{\mu,s}}}{|x-y|^{\mu}}~\mathrm{d}x\mathrm{d}y \right)^{\frac{n-2s}{2n-\mu}}} .\]
\end{enumerate}
\end{Lemma}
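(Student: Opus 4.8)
For part (i), observe that $\mb G_{r,\epsilon}=\mathrm{span}\{e_1,\dots,e_r,u_\epsilon\}$ is finite dimensional, so by Lemma \ref{equiv-norm} the constraint set $M$ is nothing but the unit sphere of the (equivalent) norm $\|\cdot\|_0$ restricted to $\mb G_{r,\epsilon}$, hence $M$ is nonempty and compact. Since $u\mapsto\|u\|^2-\la\int_\Om|u|^2\,\mathrm dx$ is continuous on $X_0$, it attains its maximum over $M$ at some $u_g$. Both $\|\cdot\|_0$ and $\|\cdot\|^2-\la|\cdot|_2^2$ are unchanged under $u\mapsto -u$, so $-u_g$ is again a maximizer; writing the maximizer as $v+su_\epsilon$ with $v\in\mb D_r$ and $s\in\mb R$ and passing to $-u_g$ when $s<0$ yields $u_g=w+tu_\epsilon$ with $w\in\mb D_r$ and $t\ge 0$. (For small $\epsilon$ one even has $u_\epsilon\notin\mb D_r$, since $\|u_\epsilon\|^2/|u_\epsilon|_2^2\to\infty$ by Proposition \ref{estimates} while $\|v\|^2\le\la_r|v|_2^2$ on $\mb D_r$, so the decomposition is unique; this is not needed, however.)

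For part (ii), write $w=\sum_{i=1}^ra_ie_i$ and use that $\{e_i\}$ is orthonormal in $L^2(\Om)$ and orthogonal in $X_0$ with $\|e_i\|^2=\la_i$ to expand
\begin{equation*}
g_{r,\epsilon}=\sum_{i=1}^ra_i^2(\la_i-\la)+2t\Big(\langle w,u_\epsilon\rangle-\la\int_\Om wu_\epsilon\,\mathrm dx\Big)+t^2\Big(\|u_\epsilon\|^2-\la\int_\Om u_\epsilon^2\,\mathrm dx\Big).
\end{equation*}
Since $\la_i\le\la_r\le\la$ for $i\le r$, the first sum is $\le(\la_r-\la)|w|_2^2$, and because $g_{r,\epsilon}$ is the maximum of the scale invariant quotient $\big(\|\cdot\|^2-\la|\cdot|_2^2\big)/\|\cdot\|_0^{(n-2s)/(2n-\mu)}$ the $\mb D_r$-component of the maximizer lies in the $\la_r$-eigenspace, so this sum equals $(\la_r-\la)|w|_2^2$. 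If $t=0$ this is already the asserted identity.

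For the case $t>0$ the two remaining quantities must be controlled. First I would bound the mixed term: on the finite dimensional space $\mb D_r$ all norms are equivalent, so $|w|_\infty\le C|w|_2$ and $\big|(-\De)^sw\big|_\infty\le C|w|_2$; combining this with the concentration estimate $\int_\Om|u_\epsilon|\,\mathrm dx=o(\epsilon^{(n-2s)/2})$, which follows from the explicit profile of $U_\epsilon$ exactly as in the proof of Proposition \ref{estimates}, and with $\langle w,u_\epsilon\rangle=\int_\Om\big((-\De)^sw\big)u_\epsilon\,\mathrm dx$, one gets $\big|\langle w,u_\epsilon\rangle-\la\int_\Om wu_\epsilon\,\mathrm dx\big|=|w|_2\,o(\epsilon^{(n-2s)/2})$. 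Second I would use the constraint $\|w+tu_\epsilon\|_0=1$: splitting the Choquard double integral of $w+tu_\epsilon$ by Hardy--Littlewood--Sobolev and using that $u_\epsilon$ concentrates at the origin while $w$ stays bounded, one obtains $\|w+tu_\epsilon\|_0=t^{2\cdot2^*_{\mu,s}}\|u_\epsilon\|_0+|w|_2\,o(\epsilon^{(n-2s)/2})$; solving this for $t^2$ and substituting into $t^2\big(\|u_\epsilon\|^2-\la|u_\epsilon|_2^2\big)$, the definition of $F_\epsilon$ together with $1/2^*_{\mu,s}=(n-2s)/(2n-\mu)$ gives $t^2\big(\|u_\epsilon\|^2-\la|u_\epsilon|_2^2\big)=F_\epsilon\big(1+|w|_2\,o(\epsilon^{(n-2s)/2})\big)$. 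Adding the three contributions yields the stated formula.

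The main obstacle is the step involving the constraint: one must expand the nonlocal Choquard energy $\|w+tu_\epsilon\|_0$ precisely enough to see that all interactions between the fixed profile $w$ and the concentrating bubble $u_\epsilon$ — including the purely $w$-dependent term $\|w\|_0$, which forces one to control the size of the maximizing $w$ as $\epsilon\to0$ — are absorbed into $|w|_2\,o(\epsilon^{(n-2s)/2})$, and then track how the resulting $\epsilon$-perturbation of $t$ propagates, through the homogeneity of degree $2\cdot2^*_{\mu,s}$ of the Choquard norm, into the factor multiplying $F_\epsilon$. Everything else is the linear algebra of the eigenbasis, the bubble estimates of Proposition \ref{estimates}, and the Hardy--Littlewood--Sobolev inequality.
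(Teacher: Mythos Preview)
Your plan matches the paper's proof almost step for step: compactness on the finite-dimensional sphere for (i), then in (ii) the quadratic expansion of $\|w+tu_\epsilon\|^2-\la|w+tu_\epsilon|_2^2$, the bound on the $\mb D_r$-part by $(\la_r-\la)|w|_2^2$, the estimate $|u_\epsilon|_1=o(\epsilon^{(n-2s)/2})$ for the cross term, and the use of the constraint $\|w+tu_\epsilon\|_0=1$ to rewrite $t^2(\|u_\epsilon\|^2-\la|u_\epsilon|_2^2)$ as $F_\epsilon\cdot(\|tu_\epsilon\|_0)^{(n-2s)/(2n-\mu)}$.

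Two points where the paper is more concrete than your sketch. First, the paper only proves the inequality $g_{r,\epsilon}\le(\la_r-\la)|w|_2^2+\dots$, not the equality the statement suggests; your argument that $w$ must lie in the $\la_r$-eigenspace is not needed (and is not obviously correct, since changing the direction of $w$ also affects the constraint and the cross terms). Second, your ``splitting by HLS'' for the constraint is where the paper works harder: it first uses Lemma~\ref{equiv-norm} and H\"older to show $|u_g|_2$, hence $|w|_2$ and $t$, are bounded uniformly in $\epsilon$ (you flag this in your obstacle paragraph but do not do it), and then uses convexity of $r\mapsto r^{2\cdot2^*_{\mu,s}}$ together with the semigroup form of the Riesz energy to get the one-sided bound
\[
\|tu_\epsilon\|_0\ \le\ 1 + C|w|_2\,|u_\epsilon|^{\frac{3n-2\mu+2s}{n-2s}}_{\frac{n(3n-2\mu+2s)}{(2n-\mu)(n-2s)}}\ =\ 1+|w|_2\,o(\epsilon^{\frac{n-2s}{2}}).
\]
This avoids having to expand and estimate \emph{all} interaction terms of the Choquard energy, which your HLS splitting would require. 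With those two refinements your outline becomes exactly the paper's proof.
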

\begin{proof}
\begin{enumerate}
\item[(i)] Clearly $\mb G_{r,\epsilon}$ is finite dimensional, so $g_{r,\epsilon}$ is achieved at $u_g$, say. Then, $u_g \in M$ and by definition of $\mb G_{r,\epsilon}$, there exist $w \in \mb D_r$ and $t\in \mb R$ such that
    $u_g = w + tu_\epsilon$.
We can assume $t \geq 0$ because if $t<0$, then we can replace $u_g$ by $-u_g$.
\item[(ii)] To prove this, first let $t=0$, then $u_g= w \in \mb D_r$ and
\[g_{r,\epsilon} = \|w\|^2 - \la \int_{\Om}|w|^2~\mathrm{d}x \leq (\la_r-\la)|w|_2^2. \]
Now, suppose $t>0$ and set
\[\widehat{w} = w+ t \sum_{i=1}^{r}\left( \int_{\Om}u_\epsilon e_i\mathrm{d}x\right)e_i \in \mb D_r\;\; \text{and} \; \; \widehat{u}_\epsilon= u_\epsilon - \sum_{i=1}^{r}\left( \int_{\Om}u_\epsilon e_i\mathrm{d}x\right)e_i \]
and find that $\widehat{w}$ and $ \widehat{u}_{\epsilon}$ are orthogonal in $L^2(\Om)$.
Then, $u_g = \widehat{w}+t \widehat{u}_{\epsilon}$ and $|u_g|^2_2 = |\widehat{w}|^2_2+t^2|\widehat{u}_{\epsilon}|^2_2$. Since
\[\int_{\Om}\int_{\Om}\frac{|u_g(x)|^{2^*_{\mu,s}}|u_g(y)|^{2^*_{\mu,s}}}{|x-y|^{\mu}}~\mathrm{d}x\mathrm{d}y =1,\]
 using lemma \ref{equiv-norm}, we get a constant $C_0>0$(independent of $\epsilon$) such that $|u_g|_{2^{*}_{\mu,s}} \leq C_0$. Subsequently, using H\"{o}lder inequality, we get a constant $C_1>0$(also independent of $\epsilon$) such that $|u_g|^2_2 \leq C_1$. Therefore, we can find $C_2>0$ such that $|u_g|^2_2$ and $|\widehat{w}|^2_2$ are both uniformly bounded in $\epsilon$. This further implies that $t < C_3$, for some $C_3>0$. By computations as before, we get
 \begin{equation}\label{g-esti1}
 \begin{split}
 |u_\epsilon|^{\frac{3n-2\mu+2s}{n-2s}}_{\frac{n(3n-2\mu+2s)}{(2n-\mu)(n-2s)}} &= \left( \int_{\Om} |u_\epsilon|^{\frac{n(3n-2\mu+2s)}{(2n-\mu)(n-2s)}}~\mathrm{d}x\right)^{\frac{2n-\mu}{n}} \leq \left( \int_{B_{2\delta}} |U_\epsilon|^{\frac{n(3n-2\mu+2s)}{(2n-\mu)(n-2s)}}~\mathrm{d}x\right)^{\frac{2n-\mu}{n}}\\
 & \leq C_4 \epsilon^{\frac{n-2s}{2}} \left( \int_{0}^{\frac{2\delta}{\epsilon}}\frac{r^{n-1}}{(1+r^2)^{\frac{n(3n-2\mu+2s)}{(2n-\mu)(n-2s)}}}~\mathrm{d}r \right)^{\frac{2n-\mu}{n}} \leq o(\epsilon^{\frac{n-2s}{2}}),
 \end{split}
 \end{equation}
  where $C_4>0$ is constant. Since $e_1, e_2, \ldots, e_r \in L^{\infty}(\Om)$, we have $\hat{w} \in L^{\infty}(\Om)$. Using the fact that the map $t \mapsto t^{22^*_{\mu,s}}$ in convex, for $t \geq 0$ and $\mb D_{r}$ being finite dimensional, all norms are equivalent, we get
  \begin{equation*}
 \begin{split}
 1&=\int_{\Om}\int_{\Om}\frac{|u_g(x)|^{2^*_{\mu,s}}|u_g(y)|^{2^*_{\mu,s}}}{|x-y|^{\mu}}~\mathrm{d}x\mathrm{d}y
 = \int_{\Om}\int_{\Om}\frac{|(w+tu_\epsilon)(x)|^{2 2^*_{\mu,s}}}{|x-y|^{\mu}}~\mathrm{d}x\mathrm{d}y\\
 &\geq  \int_{\Om}\int_{\Om}\frac{|tu_\epsilon(x)|^{2 2^*_{\mu,s}}}{|x-y|^{\mu}}~\mathrm{d}x\mathrm{d}y + 2 2^*_{\mu,s} \int_{\Om}\int_{\Om}\frac{|tu_\epsilon(x)|^{2 2^*_{\mu,s}-1}|w(x)|}{|x-y|^{\mu}}~\mathrm{d}x\mathrm{d}y
 \end{split}
 \end{equation*}
 \begin{equation*}
 \begin{split}
 & \geq \int_{\Om}\int_{\Om}\frac{|tu_\epsilon(x)|^{ 2^*_{\mu,s}}|tu_\epsilon(y)|^{ 2^*_{\mu,s}}}{|x-y|^{\mu}}~\mathrm{d}x\mathrm{d}y\\
  & \quad - 2 2^*_{\mu,s}|w|_{\infty} \int_{\Om}\int_{\Om}\frac{|tu_\epsilon(x)|^{\frac{2 2^*_{\mu,s}-1}{2}} |tu_\epsilon(y)|^{\frac{2 2^*_{\mu,s}-1}{2}}}{|x-y|^{\mu}}~\mathrm{d}x\mathrm{d}y\\
 & \geq \int_{\Om}\int_{\Om}\frac{|tu_\epsilon(x)|^{ 2^*_{\mu,s}}|tu_\epsilon(y)|^{ 2^*_{\mu,s}}}{|x-y|^{\mu}}~\mathrm{d}x\mathrm{d}y - C_5 |w|_2 |u_\epsilon|^{\frac{3n-2\mu+2s}{n-2s}}_{\frac{n(3n-2\mu+2s)}{(2n-\mu)(n-2s)}}.
 \end{split}
 \end{equation*}
 Considering \eqref{g-esti1} with above inequality, we get
 \[ \int_{\Om}\int_{\Om}\frac{|tu_\epsilon(x)|^{ 2^*_{\mu,s}}|tu_\epsilon(y)|^{ 2^*_{\mu,s}}}{|x-y|^{\mu}}~\mathrm{d}x\mathrm{d}y \leq 1+ C_5 |w|_2o(\epsilon^{\frac{n-2s}{2}}).\]
 Hence, using the definition of $A_\epsilon$ and $v$ being linear combination of finitely many eigenfunctions, we get
 \begin{equation*}
 \begin{split}
 g_{r,\epsilon} &\leq (\la_r -\la)|w|_2^2 + A_\epsilon \left(\int_{\Om}\int_{\Om}\frac{|u_\epsilon(x)|^{2^*_{\mu,s}}|u_\epsilon(y)|^{2^*_{\mu,s}}}{|x-y|^{\mu}}~\mathrm{d}x\mathrm{d}y \right)^{\frac{n-2s}{2n-\mu}} + C_6 \int_{\Om}u_\epsilon w\mathrm{d}x\\
 & \leq (\la_r -\la)|w|_2^2 + A_\epsilon \left(  1+ C_5 |w|_2o(\epsilon^{\frac{n-2s}{2}}) \right)+ C_7 |u_\epsilon|_1 |w|_2\\
 & \leq (\la_r -\la)|w|_2^2 + A_\epsilon \left(  1+ C_5 |w|_2o(\epsilon^{\frac{n-2s}{2}}) \right) + o(\epsilon^{\frac{n-2s}{2}}) |w|_2,
 \end{split}
 \end{equation*}
\end{enumerate}
where we used $|u_\epsilon|_1= o(\epsilon^{\frac{n-2s}{2}})$  (which can be derived as other estimates done before). This completes the proof.\hfill{\QED}
\end{proof}

\begin{Lemma}\label{G-min}
If $n \geq 4s$ and $\la \in [\la_r, \la_{r+1})$, for some $r \in \mb N$, then for every $u \in G_{r, \epsilon}$ we have
\[\frac{\|u\|^2 - \la \int_{\Om}|u|^2\mathrm{d}x}{\left(\int_{\Om}\int_{\Om}\frac{|u(x)|^{2^*_{\mu,s}}|u(y)|^{2^*_{\mu,s}}}{|x-y|^{\mu}}~\mathrm{d}x\mathrm{d}y \right)^{\frac{n-2s}{2n-\mu}}} < S^H_s.\]
\end{Lemma}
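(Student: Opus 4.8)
The plan is to reduce the problem to the two cases $t=0$ and $t>0$ supplied by Lemma~\ref{g-esti}(i), and in each case use the asymptotic expansion of $g_{r,\epsilon}$ from Lemma~\ref{g-esti}(ii) together with the estimate \eqref{elem1} (the analogue of \eqref{elem2}/\eqref{elem3} for $u_\epsilon$). Every $u\in\mb G_{r,\epsilon}\setminus\{0\}$ can, after normalizing so that $\int_\Om\int_\Om |u(x)|^{2^*_{\mu,s}}|u(y)|^{2^*_{\mu,s}}|x-y|^{-\mu}\,\mathrm{d}x\mathrm{d}y = 1$, be written as $u = w + tu_\epsilon$ with $w\in\mb D_r$, $t\ge 0$; and by definition the quotient on the left of the desired inequality is bounded above by $g_{r,\epsilon}$. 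So it suffices to show $g_{r,\epsilon} < S^H_s$ for $\epsilon$ small.

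First I would dispose of the case $t=0$: then Lemma~\ref{g-esti}(ii) gives $g_{r,\epsilon} = (\la_r-\la)|w|_2^2 \le 0 < S^H_s$ since $\la\ge\la_r$, and actually one should note this subcase feeds into the normalization constraint — if $t=0$ the constraint $\|u\|_0 = 1$ together with Lemma~\ref{equiv-norm} forces $|w|_2$ to be bounded below by a positive constant, so in fact $g_{r,\epsilon}$ is strictly negative, well away from $S^H_s$. Next, for $t>0$, Lemma~\ref{g-esti}(ii) yields
\[
g_{r,\epsilon} = (\la_r-\la)|w|_2^2 + F_\epsilon\bigl(1 + |w|_2\, o(\epsilon^{\frac{n-2s}{2}})\bigr) + o(\epsilon^{\frac{n-2s}{2}})|w|_2.
\]
By \eqref{elem1} we have $F_\epsilon < S^H_s$; more precisely, from \eqref{elem2}/\eqref{elem3} (valid since $n\ge 4s$) one has $F_\epsilon \le S^H_s - \la C_s\epsilon^{2s}|\log\epsilon| + o(\epsilon^{2s})$ when $n=4s$ and $F_\epsilon \le S^H_s - \la C_s\epsilon^{2s} + o(\epsilon^{n-2s})$ when $n>4s$. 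Since $\la\ge\la_r > 0$ the term $(\la_r-\la)|w|_2^2$ is $\le 0$; the remaining error terms are $o(\epsilon^{\frac{n-2s}{2}})|w|_2 + F_\epsilon |w|_2 o(\epsilon^{\frac{n-2s}{2}})$, and using that $|w|_2 \le C$ uniformly in $\epsilon$ (established in the proof of Lemma~\ref{g-esti}) these are $o(\epsilon^{\frac{n-2s}{2}})$. Thus $g_{r,\epsilon} \le S^H_s - \la C_s\epsilon^{2s}|\log\epsilon| + o(\epsilon^{\frac{n-2s}{2}}) + o(\epsilon^{2s})$ (resp.\ with $\epsilon^{2s}$ in place of $\epsilon^{2s}|\log\epsilon|$ when $n>4s$).

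The main obstacle — and the reason the hypothesis $n\ge 4s$ is needed — is a competition of powers: the gain from $F_\epsilon$ is of order $\epsilon^{2s}$ (up to the log when $n=4s$), while the error terms coming from the cross terms between $w$ and $u_\epsilon$ in the expansion of $g_{r,\epsilon}$ are of order $\epsilon^{\frac{n-2s}{2}}$. For the gain to dominate one needs $\epsilon^{2s} = o(\epsilon^{\frac{n-2s}{2}})$ as $\epsilon\to 0$, i.e.\ $\frac{n-2s}{2} \le 2s$, that is $n \le 6s$ — wait, more carefully one needs the negative $\epsilon^{2s}$-term to beat the positive $\epsilon^{\frac{n-2s}{2}}$-error, which requires $2s \le \frac{n-2s}{2}$ to be \emph{false}, i.e.\ $n < 6s$; combined with the standing hypothesis $n\ge 4s$ this is exactly the Brezis--Nirenberg-type dimension restriction that the remaining error is genuinely lower order. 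I would therefore carefully track that for $n\ge 4s$ the dominant correction is the strictly negative term $-\la C_s\epsilon^{2s}|\log\epsilon|$ (or $-\la C_s\epsilon^{2s}$), so that for all sufficiently small $\epsilon>0$ one gets $g_{r,\epsilon} < S^H_s$, and hence the claimed strict inequality for every $u\in\mb G_{r,\epsilon}$. This finishes the proof.
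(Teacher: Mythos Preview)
Your overall strategy --- reduce to $g_{r,\epsilon}<S^H_s$ and split according to $t=0$ or $t>0$ --- matches the paper, and the $t=0$ case is fine. The gap is in the $t>0$ case, and it is precisely the ``competition of powers'' you flag but then mishandle.

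You discard the term $(\la_r-\la)|w|_2^2\le 0$ and bound the cross terms by $C\,o(\epsilon^{\frac{n-2s}{2}})$ using $|w|_2\le C$. This leaves you with
\[
g_{r,\epsilon}\le S^H_s - \la C_s\epsilon^{2s}(\,|\log\epsilon|\,) + o(\epsilon^{\frac{n-2s}{2}}),
\]
and now you need $o(\epsilon^{\frac{n-2s}{2}})=o(\epsilon^{2s})$, i.e.\ $\frac{n-2s}{2}\ge 2s$, i.e.\ $n\ge 6s$. Your direction of the inequality is reversed: for $4s\le n<6s$ one has $\epsilon^{\frac{n-2s}{2}}\gg\epsilon^{2s}$, so the error swamps the gain and your argument fails on exactly the range you claim it succeeds. (At $n=4s$ the error is $o(\epsilon^{s})$, which again dominates $\epsilon^{2s}|\log\epsilon|$.) The lemma must cover all $n\ge 4s$, so this is a real hole.

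The missing idea, and what the paper does, is \emph{not} to throw away $(\la_r-\la)|w|_2^2$. Since $\la>\la_r$ this is a strictly negative quadratic in $|w|_2$, and it can absorb the linear term $|w|_2\,o(\epsilon^{\frac{n-2s}{2}})$ by completing the square:
\[
(\la_r-\la)|w|_2^2 + |w|_2\,o(\epsilon^{\frac{n-2s}{2}}) \;\le\; \frac{\bigl(o(\epsilon^{\frac{n-2s}{2}})\bigr)^2}{4(\la-\la_r)} \;=\; o(\epsilon^{n-2s}).
\]
This converts the troublesome $o(\epsilon^{\frac{n-2s}{2}})$ into $o(\epsilon^{n-2s})$, and $o(\epsilon^{n-2s})$ \emph{is} dominated by the gain $-\la C_s\epsilon^{2s}$ (resp.\ $-\la C_s\epsilon^{2s}|\log\epsilon|$) for every $n\ge 4s$, since $n-2s\ge 2s$. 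That is the step that makes the dimension threshold $n\ge 4s$ appear correctly.
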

\begin{proof}
It is enough to show that $g_{r,\epsilon} < S^H_s$. From lemma \ref{g-esti}, if $t=0$ we have
\[ g_{r,\epsilon} \leq (\la_r -\la)|w|^2_2 <0 < S^H_s.\]
Else if $t>0$, then we consider the cases $n=4s$ and $n>4s$ separately.\\
\textbf{Case: $(n=4s)$} By lemma \ref{g-esti}(ii) and estimates in \eqref{elem2}, we have
\begin{equation*}
\begin{split}
g_{r,\epsilon} &\leq (\la_r -\la)|w|^2_2+ \frac{\|u_\epsilon\|^2 - \la \int_{\Om}|u_\epsilon|^2\mathrm{d}x}{\left(\int_{\Om}\int_{\Om}\frac{|u_\epsilon(x)|^{2^*_{\mu,s}}|u_\epsilon(y)|^{2^*_{\mu,s}}}{|x-y|^{\mu}}~\mathrm{d}x\mathrm{d}y \right)^{\frac{2s}{8s-\mu}}}(1+ |w|_2)o(\epsilon^s) +o(\epsilon^s)|w|_2\\
& \leq S^H_s - \la C_s\epsilon^{2s}|\log \epsilon|+ o(\epsilon^{2s}) +(\la_r -\la)|w|^2_2+ |w|_2o(\epsilon^s),
\end{split}
\end{equation*}
for sufficiently small $\epsilon>0$.\\
\textbf{Case: $(n>4s)$} Again, by lemma \ref{g-esti}(ii) and estimates in \eqref{elem3}, we have
\begin{equation*}
\begin{split}
g_{r,\epsilon} &\leq (\la_r -\la)|w|^2_2+ \frac{\|u_\epsilon\|^2 - \la \int_{\Om}|u_\epsilon|^2\mathrm{d}x}{\left(\int_{\Om}\int_{\Om}\frac{|u_\epsilon(x)|^{2^*_{\mu,s}}|u_\epsilon(y)|^{2^*_{\mu,s}}}{|x-y|^{\mu}}~\mathrm{d}x\mathrm{d}y \right)^{\frac{n-2s}{2n-\mu}}}(1+ |w|_2)o(\epsilon^{\frac{n-2s}{2}}) +o(\epsilon^{\frac{n-2s}{2}})|w|_2\\
& \leq S^H_s - \la C_s\epsilon^{2s}+ o(\epsilon^{n-2s}) +(\la_r -\la)|w|^2_2+ |w|_2o(\epsilon^{\frac{n-2s}{2}})
\end{split}
\end{equation*}
for sufficiently small $\epsilon>0$. Also, we have that
\[(\la_r -\la)|w|^2_2+ |w|_2o(\epsilon^{\frac{n-2s}{2}}) \leq \frac{1}{4(\la_r-\la)}o(\epsilon^{{n-2s}})= o(\epsilon^{n-2s},)\]
which implies $g_{r,\epsilon} < S^H_s$ for both the cases. This completes the proof. \hfill{\QED}
\end{proof}

\noi \textbf{Proof of Theorem \ref{thrm1}: ($n\geq 4s$, $\la >\la_1$)}\\
  In the proof of lemma \ref{g-esti}(ii), we considered
\[\widehat{u}_\epsilon= u_\epsilon - \sum_{i=1}^{r}\left( \int_{\Om}u_\epsilon e_i\mathrm{d}x\right)e_i.\]
From the definition of $\mb G_(r,\epsilon)$, we can write that
\[\mb G_{r,\epsilon} = \mb D_{r} \oplus u_\epsilon \mb R = \mb D_{r} \oplus \hat{u}_\epsilon \mb R, \]
where $u_\epsilon\mb R =\{ru_\epsilon: r \in \mb R\}$ and similarly, $z_\epsilon\mb R $. By lemma \ref{Link}, we have
\begin{enumerate}
\item[(i)] $\inf_{u \in \mb M_{r+1}, \|u\|=\rho} I(u) \geq \beta >0$,
\item[(ii)] $ \sup_{u \in \mb D_{r}} I(u) <0$, and
\item[(iii)] $\sup_{u \in \mb G_{r,\epsilon}, \|u\|\geq R}I(u) \leq 0 $,
\end{enumerate}
where $\beta, \rho$ are defined in lemma \ref{Link}. Therefore, $I$ satisfies the geometric structure of the linking theorem (Theorem 5.3,\cite{rabi}). We define
\[\bar{c} = \inf_{\gamma \in \Gamma}\max_{u \in A}I(\gamma(u)) >0,\]
where $\gamma := \{\gamma \in C(\bar{A},X_0): \gamma = id \text{ on } \partial A\}$ and $ A := (\bar{B}_R \cap \mb D_r ) \oplus \{r\hat{u}_\epsilon: r \in (0,R)\}$. By definition, for any $\gamma \in \Gamma$, we have
$\bar{c} \leq \max_{u \in A}I(\gamma(u))$
and particularly, if we take $\gamma = id$ on $\bar{A}$, then
\[\bar{c} \leq \max_{u \in A}I(u) \leq \max_{\mb G_{r,\epsilon}} I(u).\]
As we earlier saw, for any $u \in X_0 \setminus \{0\}$,
\begin{equation}\label{G-min1}
\frac{n+2s-\mu}{2(2n-\mu)}\left( \frac{\|w\|^2 - \la \int_{\Om}|w|^2\mathrm{d}x}{\left( \int_{\Om}\int_{\Om}\frac{|w(x)|^{2^*_{\mu,s}}|w(y)|^{2^*_{\mu,s}}}{|x-y|^{\mu}}~\mathrm{d}x\mathrm{d}y \right)^{\frac{n-2s}{2n-\mu}}}  \right)^{\frac{2n-\mu}{n+2s-\mu}} = \max_{t \geq 0}I(tu).
\end{equation}
Since $\mb G_{r,\epsilon}$ is a linear space, we have
\[\max_{u \in \mb G_{r,\epsilon}} I(u)= \max_{u \in \mb G_{r,\epsilon}, t\neq 0} I(\frac{|t|u}{|t|}) \leq \max_{u \in \mb G_{r,\epsilon}, t\geq 0} I(tu). \]
Hence, using lemma \ref{G-min} and \eqref{G-min1}, we get
\begin{equation*}
\bar{c} \leq \max_{u \in \mb G_{r,\epsilon}, t\geq 0} I(tu)< \frac{n+2s-\mu}{2(2n-\mu)} (S^H_s)^{\frac{2n-\mu}{n+2s-\mu}}.
\end{equation*}
Finally, using Linking theorem and lemma \ref{PSlevel}, we conclude that $(P_\la)$ has a nontrivial solution in $X_0$ with critical value $\bar{c} \geq \beta$.\hfill{\QED}
\section{Proof of Theorem \ref{newthrm}}
We will prove this theorem using the Mountain Pass and Linking Theorem in a combined way.
\begin{Lemma}
Let $2s<n<4s$ and $u_\epsilon$ be as defined in section 4, case 1. Then there exists $\bar \la>0$ such that for $\la > \bar \la$,
\[\frac{\|u_\epsilon\|^2 - \la \int_{\Om}|u_\epsilon|^2\mathrm{d}x}{\left(\int_{\Om}\int_{\Om}\frac{|u_\epsilon(x)|^{2^*_{\mu,s}}|u_\epsilon(y)|^{2^*_{\mu,s}}}{|x-y|^{\mu}}~\mathrm{d}x\mathrm{d}y \right)^{\frac{n-2s}{2n-\mu}}} < S^H_s.\]
\end{Lemma}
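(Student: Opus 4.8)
The plan is to deduce the inequality first without invoking the fine bubble asymptotics at all, and then, for completeness, to record the equivalent quantitative route. I would fix $\epsilon>0$ once and for all (small enough that the same $u_\epsilon$ can later be fed into the linking construction for Theorem \ref{newthrm}). Since $u_\epsilon\in X_0\setminus\{0\}$ has compact support in $\Om$ and $\Om\neq\mb R^n$, the Lemma of Section 2 identifying $S^H_s(\Om)$ with $S^H_s$ and asserting that the infimum is never attained on a proper bounded domain gives the \emph{strict} inequality
\[
\Theta_\epsilon:=\frac{\|u_\epsilon\|^2}{\left(\int_\Om\int_\Om\frac{|u_\epsilon(x)|^{2^*_{\mu,s}}|u_\epsilon(y)|^{2^*_{\mu,s}}}{|x-y|^{\mu}}\,\mathrm dx\,\mathrm dy\right)^{\frac{n-2s}{2n-\mu}}}>S^H_s .
\]
Writing $m_\epsilon:=\left(\int_\Om\int_\Om\frac{|u_\epsilon(x)|^{2^*_{\mu,s}}|u_\epsilon(y)|^{2^*_{\mu,s}}}{|x-y|^{\mu}}\,\mathrm dx\,\mathrm dy\right)^{-\frac{n-2s}{2n-\mu}}\int_\Om|u_\epsilon|^2\,\mathrm dx$, a fixed strictly positive number, the quotient in the statement is exactly $\Theta_\epsilon-\la\,m_\epsilon$, which is $<S^H_s$ as soon as $\la>\bar\la:=(\Theta_\epsilon-S^H_s)/m_\epsilon$; and $\bar\la>0$ precisely because $\Theta_\epsilon>S^H_s$ strictly. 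This already settles the lemma.

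To see $\bar\la$ quantitatively in terms of the bubble (which is also what the later proof of Theorem \ref{newthrm} needs), I would run the computation of Section 4, Case (1), tracking what survives for $2s<n<4s$. By Remark \ref{rmrk} the bounds \eqref{esti-new} and \eqref{har-esti7} persist in this range, and the only new ingredient is the third alternative of Proposition \ref{estimates}(iii), $\int_\Om|u_\epsilon|^2\,\mathrm dx=C_s\epsilon^{n-2s}+o(\epsilon^{2s})$ with $C_s>0$. Using \eqref{relation} to rewrite $S_s^{n/(2s)}$ in terms of $S^H_s$, the numerator is bounded above by $(C(n,\mu))^{\frac{n(n-2s)}{2s(2n-\mu)}}(S^H_s)^{\frac{n}{2s}}+(K-\la C_s)\epsilon^{n-2s}+o(\epsilon^{n-2s})$, where $K\ge 0$ is the fixed, $\la$-independent constant governing the gap $\|u_\epsilon\|^2-S_s^{n/(2s)}$, while \eqref{har-esti7} and \eqref{har-esti4} give the lower bound $(C(n,\mu))^{\frac{n(n-2s)}{2s(2n-\mu)}}(S^H_s)^{\frac{n-2s}{2s}}+o(\epsilon^{n})$ for the denominator. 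Dividing, using $(S^H_s)^{\frac{n}{2s}}/(S^H_s)^{\frac{n-2s}{2s}}=S^H_s$ together with $\epsilon^{n}=o(\epsilon^{n-2s})$, the quotient is at most $S^H_s-c\,(\la C_s-K)\,\epsilon^{n-2s}+o(\epsilon^{n-2s})$ for an explicit $c>0$; taking $\bar\la=K/C_s$ makes the leading correction strictly negative for all small $\epsilon$, recovering the claim.

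The one place where care is really needed — and the structural reason a positive threshold $\bar\la$ is unavoidable here, unlike Section 4, Case (1) where every $\la\in(0,\la_1)$ worked — is the matching of decay rates. For $n\ge 4s$ one has $\int_\Om|u_\epsilon|^2\sim\epsilon^{2s}$ with $2s<n-2s$, so the negative term $-\la\int_\Om|u_\epsilon|^2$ dominates the $o(\epsilon^{n-2s})$ remainder for every $\la>0$ however small; for $2s<n<4s$ the mass $\int_\Om|u_\epsilon|^2\sim\epsilon^{n-2s}$ sits at exactly the same scale as the energy gap $\|u_\epsilon\|^2-S_s^{n/(2s)}$, so the perturbation prevails only once $\la C_s$ exceeds that gap constant — equivalently, once $\la$ is pushed past the explicit threshold $(\Theta_\epsilon-S^H_s)/m_\epsilon$ from the first paragraph. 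I do not expect any further obstruction; in particular, should the numerator happen to be $\le 0$ for the chosen $\epsilon$, the inequality is immediate since $S^H_s>0$.
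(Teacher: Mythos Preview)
Your argument is correct. The opening paragraph takes a genuinely different and more elementary route than the paper: you freeze $\epsilon$, invoke the non-attainment of $S^H_s$ on a proper bounded domain (the lemma in Section~2) to obtain the strict inequality $\Theta_\epsilon>S^H_s$, and read off $\bar\la=(\Theta_\epsilon-S^H_s)/m_\epsilon>0$ directly. The paper instead runs the asymptotics of Propositions~\ref{estimates} and~\ref{estimates1} together with the $n<4s$ line of Proposition~\ref{estimates}(iii), bounding the quotient by $S^H_s+\epsilon^{n-2s}(o(1)-\la C_s)/(\text{const})+o(\epsilon^{2s})$ and declaring this $<S^H_s$ for $\la$ large and $\epsilon$ small---which is precisely your second paragraph, where you are in fact more careful than the paper in isolating the constant $K$ in the $O(\epsilon^{n-2s})$ energy gap (the $o(\epsilon^{n-2s})$ recorded in Proposition~\ref{estimates}(i), taken literally, would make any $\la>0$ work, so your remark on why a positive threshold is structurally needed is well placed). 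What the quantitative route buys, and the soft one does not, is a $\bar\la$ uniform in small $\epsilon$: your parenthetical about re-using the same fixed $u_\epsilon$ in the linking construction is optimistic, since the next lemma in Section~5 requires $\epsilon$ small in a way that depends on $\la-\la_r$, while Theorem~\ref{newthrm} allows $\la$ arbitrarily close to an eigenvalue. So for the lemma in isolation the first paragraph suffices and is cleaner; for the full proof of Theorem~\ref{newthrm} one must fall back on the asymptotic computation you give afterwards.
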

\begin{proof}
Using Proposition \ref{estimates} and \ref{estimates1}, we get
\begin{equation*}
\begin{split}
\frac{\|u_\epsilon\|^2 - \la \int_{\Om}|u_\epsilon|^2\mathrm{d}x}{\left(\int_{\Om}\int_{\Om}\frac{|u_\epsilon(x)|^{2^*_{\mu,s}}|u_\epsilon(y)|^{2^*_{\mu,s}}}{|x-y|^{\mu}}~\mathrm{d}x\mathrm{d}y \right)^{\frac{n-2s}{2n-\mu}}}
 & \leq \frac{(C(n,\mu))^{\frac{n(n-2s)}{2s(2n-\mu)}}(S^H_s)^{\frac{n}{2s}}- \la C_s \epsilon^{n-2s} + o(\epsilon^{2s})}{\left((C(n,\mu))^{\frac{n}{2s}}(S^H_s)^{\frac{2n-\mu}{2s}}- o(\epsilon^{n}) \right)^{\frac{n-2s}{2n-\mu}}}\\
& \leq S^H_s + \frac{\epsilon^{n-2s}(o(1)-\la C_s)}{((C(n,\mu))^{\frac{n}{2s}}(S^H_s)^{\frac{2n-\mu}{2s}}-o(\epsilon^n))^{\frac{n-2s}{2n-\mu}}}+o(\epsilon^{2s})\\
&< S^H_s,
\end{split}
\end{equation*}
when we choose $\la>0$ large enough, say $\la > \bar \la$ and provided $\epsilon>0$ be sufficiently small. This completes the proof.\QED
\end{proof}

We have already seen in previous sections that the functional $I$ satisfies geometry of Mountain Pass  when $\la < \la_1$ (using Lemma \ref{MGP}). When $\la \geq \la_1$, without loss of generality, we assume $\la \in [\la_r, \la_{r+1})$, for some $r \in \mb N$. Then using Lemma \ref{Link}, we get that  $I$ satisfies geometry of Linking theorem. Also, by Lemma \ref{PSlevel}, we get that $I$ satisfies the $(PS)_c$ condition when
\[c < \frac{n+2s-\mu}{2(2n-\mu)})=(S^H_s)^{\frac{2n-\mu}{n+2s-\mu}}.\]
So, in order to apply the classical critical point theorems, we need the Mountain Pass critical level and Linking critical level of $I$ to stay below this threshold. Consider $\mb M_{r+1}, \mb D_r$ and $G_{r,\epsilon}$ be as defined in earlier section. Note that Lemma \ref{g-esti} holds true in this case and we have the following lemma.

\begin{Lemma}
If $2s<n<4s$ and $\la \in [\la_r, \la_{r+1})$, for some $r \in \mb N$, then for every $u \in G_{r, \epsilon}$ we have
\[\frac{\|u\|^2 - \la \int_{\Om}|u|^2\mathrm{d}x}{\left(\int_{\Om}\int_{\Om}\frac{|u(x)|^{2^*_{\mu,s}}|u(y)|^{2^*_{\mu,s}}}{|x-y|^{\mu}}~\mathrm{d}x\mathrm{d}y \right)^{\frac{n-2s}{2n-\mu}}} < S^H_s.\]
\end{Lemma}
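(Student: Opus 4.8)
The plan is to mirror the proof of Lemma~\ref{G-min}, the only new ingredient being the bound on $F_\epsilon$ from the preceding lemma (valid for $\la>\bar\la$), which takes over the role played by \eqref{elem2} and \eqref{elem3} in the case $n\geq 4s$. As a first reduction, observe that the Rayleigh-type quotient on the left-hand side is invariant under $u\mapsto cu$ for $c>0$ (numerator and denominator both scale like $c^2$, since $2\,2^*_{\mu,s}\cdot\frac{n-2s}{2n-\mu}=2$) and under $u\mapsto-u$; hence its supremum over $\mb G_{r,\epsilon}\setminus\{0\}$ equals $\max\bigl\{\|u\|^2-\la\int_\Om|u|^2\,\mathrm dx:\ u\in\mb G_{r,\epsilon},\ \int_\Om\int_\Om\frac{|u(x)|^{2^*_{\mu,s}}|u(y)|^{2^*_{\mu,s}}}{|x-y|^\mu}\,\mathrm dx\,\mathrm dy=1\bigr\}=g_{r,\epsilon}$. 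Thus it suffices to prove $g_{r,\epsilon}<S^H_s$.

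By Lemma~\ref{g-esti}(i), $g_{r,\epsilon}$ is attained at some $u_g=w+tu_\epsilon$ with $w\in\mb D_r$ and $t\geq 0$. If $t=0$, then $u_g=w\in\mb D_r$ and, since $\la>\la_r$ (recall $\la$ is assumed different from every eigenvalue), we get $g_{r,\epsilon}=\|w\|^2-\la|w|_2^2\leq(\la_r-\la)|w|_2^2\leq 0<S^H_s$, so this case is immediate.

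The substantial case is $t>0$. Here I would combine Lemma~\ref{g-esti}(ii),
\[g_{r,\epsilon}=(\la_r-\la)|w|_2^2+F_\epsilon\bigl(1+|w|_2\,o(\epsilon^{\frac{n-2s}{2}})\bigr)+o(\epsilon^{\frac{n-2s}{2}})|w|_2,\]
with the estimate from the preceding lemma, which for $2s<n<4s$ and $\la>\bar\la$ gives $F_\epsilon\leq S^H_s-\la C_s\,\epsilon^{n-2s}+o(\epsilon^{n-2s})$; in deriving this last form one uses Proposition~\ref{estimates}(iii) in the case $n<4s$ and the fact that $n<4s$ forces $\epsilon^{2s}=o(\epsilon^{n-2s})$, so the $o(\epsilon^{2s})$ and $o(\epsilon^n)$ remainders there are absorbed. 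Since $F_\epsilon$ stays bounded, the term $F_\epsilon|w|_2\,o(\epsilon^{\frac{n-2s}{2}})$ is absorbed into $|w|_2\,o(\epsilon^{\frac{n-2s}{2}})$, yielding
\[g_{r,\epsilon}\leq S^H_s-\la C_s\,\epsilon^{n-2s}+o(\epsilon^{n-2s})+(\la_r-\la)|w|_2^2+|w|_2\,o(\epsilon^{\frac{n-2s}{2}}).\]
Completing the square in $|w|_2$ — legitimate because $\la-\la_r>0$ — bounds $(\la_r-\la)|w|_2^2+|w|_2\,o(\epsilon^{\frac{n-2s}{2}})$ above by $\frac{o(\epsilon^{n-2s})}{4(\la-\la_r)}=o(\epsilon^{n-2s})$, so that $g_{r,\epsilon}\leq S^H_s-\la C_s\,\epsilon^{n-2s}+o(\epsilon^{n-2s})<S^H_s$ once $\epsilon$ is small. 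This proves $g_{r,\epsilon}<S^H_s$, hence the claim.

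The step I expect to be the main obstacle is the bookkeeping of powers of $\epsilon$: the argument closes only because in the regime $2s<n<4s$ one has $n-2s<2s$, so that $\epsilon^{n-2s}$ is the dominant error scale and the negative contribution $-\la C_s\,\epsilon^{n-2s}$ coming from the $L^2$-norm of $u_\epsilon$ sits at exactly this critical order, dominating both the $o(\epsilon^{2s})$ remainder in the numerator and the cross term $|w|_2\,o(\epsilon^{\frac{n-2s}{2}})$ after completing the square. This is also the only place where the lower bound $\la>\bar\la$ is genuinely used — to force the coefficient of $\epsilon^{n-2s}$ to be strictly negative — in contrast with the case $n\geq 4s$ of Lemma~\ref{G-min}, where every admissible $\la$ worked.
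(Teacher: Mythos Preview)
Your proof is correct and follows essentially the same route as the paper: both reduce to $g_{r,\epsilon}<S^H_s$ via Lemma~\ref{g-esti}, dispose of the case $t=0$ trivially, and for $t>0$ combine the bound on $F_\epsilon$ from the preceding lemma with a completion-of-the-square in $|w|_2$ (using $\la>\la_r$) so that the cross term is absorbed into $o(\epsilon^{n-2s})$. Your write-up is in fact more explicit than the paper's about the orders of $\epsilon$ and about why the argument needs $\la>\bar\la$ and $\la\neq\la_r$.
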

\begin{proof}
If $t=0$ then since $\la \in [\la_r, \la_{r+1})$, we get
\[g_{r,\epsilon} \leq (\la_r-\la)|w|^2_2 \leq 0 < S^H_s.\]
When $t>0$, then
\begin{equation*}
\begin{split}
g_{r,\epsilon} &\leq (\la_r -\la)|w|^2_2+ \frac{\|u_\epsilon\|^2 - \la \int_{\Om}|u_\epsilon|^2\mathrm{d}x}{\left(\int_{\Om}\int_{\Om}\frac{|u_\epsilon(x)|^{2^*_{\mu,s}}|u_\epsilon(y)|^{2^*_{\mu,s}}}{|x-y|^{\mu}}~\mathrm{d}x\mathrm{d}y \right)^{\frac{2s}{8s-\mu}}}(1+ |w|_2)o(\epsilon^{\frac{n-2s}{2}}) +o(\epsilon^{\frac{n-2s}{2}})|w|_2\\
& \leq S^H_s +\left( \frac{(o(1)-\la C_s)\epsilon^{n-2s}}{\left((C(n,\mu))^{\frac{n}{2s}}(S^H_s)^{\frac{2n-\mu}{2s}}- o(\epsilon^{n})\right)^{\frac{n-2s}{2n-\mu}}}+ o(\epsilon^{2s})\right)(1+ |w|_2)o(\epsilon^{\frac{n-2s}{2}})\\
 & \quad \quad +(\la_r -\la)|w|^2_2+ |w|_2o(\epsilon^{\frac{n-2s}{2}})\\
& \leq S^H_s +\frac{(o(1)-\la C_s)\epsilon^{n-2s}}{\left((C(n,\mu))^{\frac{n}{2s}}(S^H_s)^{\frac{2n-\mu}{2s}}- o(\epsilon^{n})\right)^{\frac{n-2s}{2n-\mu}}}+ (\la_r -\la)|w|^2_2 + |w|_2o(\epsilon^{\frac{n-2s}{2}}) < S^H_s,
\end{split}
\end{equation*}
for sufficiently small $\epsilon >0$ because we consider $\la > \bar \la$ and $\la \in (\la_r, \la_{r+1})$. Hence the result follows.\QED
\end{proof}

\noi \textbf{Proof of Theorem \ref{newthrm}:} We consider two cases:\\
\textbf{Case 1.} $(\la_1 > \bar \la)$ For this case, we use Mountain Pass theorem if $\la \in (\bar \la, \la_1)$ and Linking theorem if $\la \in (\la_r,\la_{r+1})$ for some $r \in \mb N$.

If $\la \in (\bar \la , \la_1)$, using Remark \ref{rmrk}, Lemma \ref{MGP} and Proposition \ref{MP}, following the same arguments as Case 1 in proof of Theorem \ref{thrm1}, we get that $(P_\la)$ admits a nontrivial solution.

Otherwise if $(\la_1 > \bar \la)$ , we assume $\la \in (\la_r,\la_{r+1})$ for some $r \in \mb N$ (since $\la$ is not an eigenvalue of $(-\De)^s$). Here, following the arguments as in Case 2 in proof of Theorem \ref{thrm1}, we get that $(P_\la)$ admits a nontrivial solution.\\
\textbf{Case 2.} $(\la_1 < \bar \la)$ In this case, we can assume $\la \in (\la_r,\la_{r+1})$ for some $r \in \mb N$ and $\la > \bar \la$. Here again, following the arguments as in Case 2 in proof of Theorem \ref{thrm1}, we get that $(P_\la)$ admits a nontrivial solution.

\section{Multiplicity Results}
By the equivalence of norms obtained in lemma \ref{equiv-norm}, we can find a constant $C^{\prime}>0$ such that
\[C^{\prime} |u|_{2^*_s} \leq \left( \int_{\Om}\int_{\Om}\frac{|u(x)|^{2^*_{\mu,s}}|u(y)|^{2^*_{\mu,s}}}{|x-y|^{\mu}}~\mathrm{d}x\mathrm{d}y\ \right)^{\frac{1}{2 2^*_{\mu,s}}},\]
for all $u \in L^{2^*_s}(\Om)$. Let us define
\[\la_* := \frac{S^H_s (C^{\prime})^2}{|\Om|^{\frac{2s}{n}}}\]
and we consider the set containing the eigenvalues between $\la$ and $\la + \bar{\la}$, that is
\[\Upsilon = \{\la < \la_i < \la_*\} = \{\la_{k+1}, \la_{k+2}, \ldots, \la_{k+q}\}.\]
If $\Upsilon$ is not empty, then we can prove Theorem \ref{thrm2}.\\
Let $V$ be a Banach space, we define
\[\sum := \{E \subset V\setminus \{0\}: E \text{ is closed in } V \text{ and symmetric with respect to origin}\}.\]
We also define genus of the set $E \in \sum$ as
\[\gamma(E) := \inf\{k \in \mb N: \exists \; \varphi \in C(E,\mb R^k)\setminus\{0\}, \varphi(x)=-\varphi(y)\}.\]
 Also, $\gamma(E) =+ \infty$, if there exists no $\varphi$ as given in definition above. We give the definition of pseudo-index.
\begin{Definition}\cite{myang}
For $E \in \sum^*= \{A \in \sum; A \text{ is compact}\}$ and
\[ \Lambda_*(\rho) = \{h \in C(V,V); h \text{ is an odd homeomorphism and } h(B_1) \subset I^{-1}(0, +\infty) \cup B_\rho\},\]
we define $i^*(E) = \inf_{h \in \Lambda_*(\rho)} \gamma (E \cap h(\partial B_1))$, for any $\rho>0$.
\end{Definition}
We state some necessary results (without giving their proofs) from \cite{myang} that will  help us to conclude our main theorem.
\begin{Proposition}\label{br1}
\begin{enumerate}
\item[(i)] Let $t \in \mb N$ and $Y$   be a subspace of $V$ with codimension $t$ and $E \subset \sum$ with $\gamma(E)>t$, then $E \cap Y \neq \emptyset$.
    \item[(ii)] If $E \subset V$, $\Om$ is a bounded neighborhood of $0$ in $\mb R^t$, and there exists a mapping $h \in C(E, \partial \Om)$ with $h$ an odd homeomorphism, then $\gamma(E)=t$.
        \item[(iii)]  If $\gamma (E) = t$ and $0 \not\in E$, then $E$ contains at least $t$ distinct pairs of points.
\end{enumerate}
\end{Proposition}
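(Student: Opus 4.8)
The three assertions are standard facts about the Krasnoselskii genus, and the plan is to derive each of them directly from the definition of $\gamma$ together with the Borsuk--Ulam theorem; I will not need anything about the functional $I$ here. Throughout I use two elementary properties of $\gamma$ that follow at once from the definition: (a) if there is an odd continuous map $E_1\to E_2$ then $\gamma(E_1)\le\gamma(E_2)$, so in particular odd homeomorphisms preserve genus; and (b) $\gamma(\emptyset)=0$, $\gamma(E)\ge 1$ whenever $E\neq\emptyset$.

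For (i), write $V=Y\oplus Z$ with $\dim Z=t$ and let $P\colon V\to Z$ be the associated projection, which is linear, hence continuous and odd. If $E\cap Y=\emptyset$, then $Px\neq 0$ for every $x\in E$ (since $Px=0$ iff $x\in Y$), so after fixing a linear isomorphism $Z\cong\mathbb{R}^t$ the restriction $P|_E$ is an odd continuous map of $E$ into $\mathbb{R}^t\setminus\{0\}$. By the definition of genus this forces $\gamma(E)\le t$, contradicting $\gamma(E)>t$; hence $E\cap Y\neq\emptyset$.

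For (ii), by property (a) and the fact that $h$ is an odd homeomorphism (which in particular forces $\partial\Om=h(E)$ to be symmetric, $E$ being symmetric), it suffices to show $\gamma(\partial\Om)=t$ for a bounded symmetric neighbourhood $\Om$ of the origin in $\mathbb{R}^t$. The bound $\gamma(\partial\Om)\le t$ is immediate from the inclusion $\partial\Om\hookrightarrow\mathbb{R}^t\setminus\{0\}$. For $\gamma(\partial\Om)\ge t$, suppose to the contrary that there is an odd continuous $\varphi\colon\partial\Om\to\mathbb{R}^{t-1}\setminus\{0\}$; extend it by Tietze to $\widetilde\varphi\colon\overline\Om\to\mathbb{R}^{t-1}\times\{0\}\subset\mathbb{R}^t$. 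Since $\widetilde\varphi$ is odd and nonvanishing on $\partial\Om$, Borsuk's theorem gives that the Brouwer degree $\deg(\widetilde\varphi,\Om,0)$ is odd, hence nonzero; but $\widetilde\varphi(\overline\Om)$ lies in a proper subspace, so points arbitrarily close to $0$ are omitted from the image and the degree must be $0$ — a contradiction. Thus $\gamma(\partial\Om)=t$, and therefore $\gamma(E)=\gamma(\partial\Om)=t$.

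For (iii), the key observation is that a non-empty finite symmetric set $E=\{\pm x_1,\dots,\pm x_m\}$ with $0\notin E$ has $\gamma(E)=1$: the points are isolated, so sending each $x_i$ to $1$ and each $-x_i$ to $-1$ defines an odd continuous map into $\mathbb{R}\setminus\{0\}$, whence $\gamma(E)\le 1$, while $\gamma(E)\ge 1$ by property (b). Now if $\gamma(E)=t$ and $E$ contained fewer than $t$ distinct pairs, then $E$ would be a non-empty finite symmetric set avoiding $0$ (when $t=1$, "fewer than one pair" means $E=\emptyset$, impossible since $\gamma(E)=1$), so $\gamma(E)=1$; this contradicts $\gamma(E)=t$ when $t\ge 2$ and is already absurd when $t=1$. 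Hence $E$ contains at least $t$ distinct pairs of points. The only non-elementary input in the whole argument is Borsuk's theorem used in step (ii) to pin down $\gamma(\partial\Om)=t$; reducing the general symmetric neighbourhood $\partial\Om$ to this degree-theoretic fact is the one place where care is needed, the rest being a routine unwinding of the definition of $\gamma$.
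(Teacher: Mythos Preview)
The paper does not give a proof of this proposition at all; it is stated without argument and attributed to \cite{myang} (and ultimately to the standard theory of the Krasnoselskii genus, e.g.\ Rabinowitz's CBMS notes). So there is no proof in the paper to compare against. Your argument is the classical one and is essentially correct.

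Two minor remarks on part (ii). First, Borsuk's theorem in its usual form requires the domain $\Om$ itself to be symmetric, not merely $\partial\Om$; you correctly observe that the existence of an odd homeomorphism $h:E\to\partial\Om$ forces $\partial\Om$ to be symmetric, but passing from there to symmetry of $\Om$ needs a word (or one simply assumes it, as the standard references do --- the statement in the paper is a bit loose on this point). Second, your Tietze extension $\tilde\varphi$ is not odd on $\overline{\Om}$, only on $\partial\Om$; this is exactly what Borsuk's degree theorem needs, so the argument goes through as written, but you might note it explicitly (or symmetrize via $x\mapsto\tfrac12(\tilde\varphi(x)-\tilde\varphi(-x))$). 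For part (iii), your argument is fine; it may read more cleanly to state the contrapositive once: any nonempty finite symmetric set avoiding $0$ has genus $1$, so $\gamma(E)\ge 2$ already forces $E$ to be infinite, which is stronger than what is asserted.
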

\begin{Lemma}\label{br2}
Let $V$ be a Banach space and $I \in C(V, \mb R)$ be an even functional satisfying:
\begin{enumerate}
\item[(i)] There exist $\rho, \beta>0$ and $V_1 \subset V$ with $dim V_1 = t$ such that $I\mid_{\partial B_\rho\cap V_1^\perp }  \geq \beta$.
    \item[(ii)]There exist $V_2 \subset V$ with $dimV_2 = t_1>t$ and $R>0$ such that for any $u \in V_2\setminus B_R$, $I(u) \leq 0$.
\end{enumerate}
We define $c^*_k := \inf\{\sup_{u \in A} I(u): A \in \sum^*, i^*(A)\geq k\}$. If $0 < c^*_{k+1}\leq c^*_{k+2}\leq \ldots \leq c^*_{m} < +\infty$ and $I$ satisfies the $(PS)_{c^*_i}$ condition  at $c^*_i$ $(k+1 \leq i \leq m )$, then $I$ has atleast $m-k$ distinct pairs of critical points and $c^*_i(k+1 \leq i \leq m)$ is the corresponding critical value.
\end{Lemma}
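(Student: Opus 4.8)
The statement is the abstract multiplicity principle behind Theorem \ref{thrm2}, and I would establish it by the equivariant pseudo-index min-max machinery (Benci, Bartolo--Benci--Fortunato): the three ingredients are an \emph{odd} quantitative deformation lemma compatible with the admissible class $\Lambda_*(\rho)$, the intersection and subadditivity properties of the pseudo-index $i^*$, and a final counting argument that turns genus lower bounds on critical sets into a count of critical pairs.

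\emph{Step 1 (the min-max values are legitimate and ordered).} First I would record that the constrained family $\{A\in\sum^{*}:\ i^{*}(A)\ge j\}$ is nonempty for $k+1\le j\le m$: using geometry (ii) together with Proposition \ref{br1}(ii), a radially rescaled sphere $\partial B_{R}\cap V_{2}$ is an admissible comparison set with $i^{*}\ge t_{1}\ge m$, so each $c^{*}_{j}$ is a genuine infimum; the ordering $c^{*}_{j}\le c^{*}_{j+1}$ is immediate because a larger index is a stronger constraint. Positivity and finiteness are hypotheses, but I would note that (i) combined with the intersection property (Proposition \ref{br1}(i), applied to $A\cap h(\partial B_{\rho})$ for $h\in\Lambda_{*}(\rho)$, which meets $V_{1}^{\perp}$ once the genus exceeds $t$) is exactly what forces $c^{*}_{j}\ge\beta$ for $j>t$, and on $\partial B_{R}\cap V_{2}$ one has $I\le 0$, which keeps $c^{*}_{m}$ finite; this also fixes the radius $\rho$ and lets us assume $c^{*}_{k+1}-2\epsilon_{0}>0$ for some small $\epsilon_{0}>0$.

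\emph{Step 2 (equivariant deformation).} Fix a value $c\in\{c^{*}_{k+1},\dots,c^{*}_{m}\}$, so $c>0$, and assume $(PS)_{c}$. Then $K_{c}:=\{u\in V:\ I'(u)=0,\ I(u)=c\}$ is compact, symmetric, and does not contain $0$. For any symmetric open neighbourhood $N$ of $K_{c}$, by flowing along an \emph{odd} locally Lipschitz pseudo-gradient field I would produce $\epsilon\in(0,\epsilon_{0})$ and an odd homeomorphism $\eta$ of $V$ with $\eta=\mathrm{id}$ outside $I^{-1}[c-2\epsilon,c+2\epsilon]\subset I^{-1}(0,+\infty)$, with $\eta(\{I\le c+\epsilon\}\setminus N)\subset\{I\le c-\epsilon\}$, and with $\eta^{\pm1}\big(I^{-1}(0,+\infty)\cup B_{\rho}\big)\subseteq I^{-1}(0,+\infty)\cup B_{\rho}$. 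This last property makes $\eta$ admissible: if $h\in\Lambda_{*}(\rho)$ then $\eta^{-1}\circ h\in\Lambda_{*}(\rho)$, and since the genus is invariant under odd homeomorphisms, $\gamma\big(\eta(A)\cap h(\partial B_{1})\big)=\gamma\big(A\cap(\eta^{-1}\!\circ h)(\partial B_{1})\big)\ge i^{*}(A)$, hence $i^{*}(\eta(A))\ge i^{*}(A)$ for every $A\in\sum^{*}$.

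\emph{Step 3 (genus of the critical set and counting).} Suppose $c:=c^{*}_{i+1}=\cdots=c^{*}_{i+p}$ with $k\le i$ and $i+p\le m$, and suppose for contradiction that $\gamma(K_{c})\le p-1$. By the continuity property of the genus choose a symmetric open $N\supset K_{c}$ with $\gamma(\overline N)\le p-1$, and run Step 2 to obtain $\eta$ and $\epsilon$. Pick $A\in\sum^{*}$ with $i^{*}(A)\ge i+p$ and $\sup_{A}I\le c+\epsilon$. Then $\overline{A\setminus N}\in\sum^{*}$ and the subadditivity of the pseudo-index, $i^{*}(\overline{A\setminus Z})\ge i^{*}(A)-\gamma(\overline Z)$ (proved just as for the Krasnoselskii genus), gives $i^{*}(\overline{A\setminus N})\ge(i+p)-(p-1)=i+1$; applying $\eta$ we get $i^{*}\big(\eta(\overline{A\setminus N})\big)\ge i+1$ while $\sup_{\eta(\overline{A\setminus N})}I\le c-\epsilon<c^{*}_{i+1}$, contradicting the definition of $c^{*}_{i+1}$. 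Hence $\gamma(K_{c})\ge p$, so by Proposition \ref{br1}(iii) $K_{c}$ contains at least $p$ distinct pairs of points, all critical. If $I$ had only finitely many critical pairs, summing these lower bounds over the distinct values among $c^{*}_{k+1}\le\cdots\le c^{*}_{m}$ produces at least $m-k$ pairs; otherwise there are infinitely many and the conclusion is trivial. The main obstacle is Step 2 — obtaining the deformation simultaneously \emph{odd} and \emph{admissible} for $\Lambda_{*}(\rho)$ so that $i^{*}\circ\eta\ge i^{*}$, together with the companion subadditivity estimate for $i^{*}$; once these equivariant refinements of the classical deformation lemma are in hand, Steps 1 and 3 are routine bookkeeping.
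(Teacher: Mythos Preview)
The paper does not give its own proof of this lemma: immediately before the statement it says ``We state some necessary results (without giving their proofs) from \cite{myang}'', so Lemma~\ref{br2} is quoted from Gao--Yang and there is nothing in the paper to compare your argument against. Your outline is the standard Benci / Bartolo--Benci--Fortunato pseudo-index scheme, and it is correct in substance.

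Two small points are worth flagging. First, the hypothesis ``$I\in C(V,\mb R)$'' in the paper's statement is clearly a transcription slip for $I\in C^{1}(V,\mb R)$; you tacitly (and rightly) assume differentiability in Step~2 when you build the odd pseudo-gradient flow and speak of $I'$. Second, the only genuinely delicate points you identify---that the odd deformation $\eta$ produced in Step~2 leaves $\Lambda_{*}(\rho)$ invariant (so $\eta^{-1}\!\circ h\in\Lambda_{*}(\rho)$ and hence $i^{*}(\eta(A))\ge i^{*}(A)$), and the subadditivity $i^{*}(\overline{A\setminus Z})\ge i^{*}(A)-\gamma(\overline Z)$---both go through exactly as you sketch once one uses that $\eta=\mathrm{id}$ on $\{I\le c-2\epsilon\}\supset\{I\le 0\}\cup (V\setminus I^{-1}(0,+\infty))$ and the ordinary subadditivity of the Krasnoselskii genus. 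With those in hand, Steps~1 and~3 are routine, so your proposal constitutes a valid proof of the cited lemma.
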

\begin{Lemma}\label{br3}
If $n >2s$ and $\la < \la_{j+1}$ for some $j \in \mb N$, then the energy functional $I$ satisfies the following:
 \begin{enumerate}
\item[(i)] There exists $\beta, \rho >0$ such that $I(u) \geq \beta$, for any $u \in \mb D_j^{\perp}$ with $\|u\|=\rho$.
\item[(iii)] If $\mb E$ be any finite dimensional subspace of $X_0$, then there exists $R> \rho$ such that for any $u \in \mb E$ with $\|u\| \geq R$, we have $I(u) \leq 0$.
\end{enumerate}
\end{Lemma}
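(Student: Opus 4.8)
The plan is to mirror the argument of Lemma~\ref{Link}, establishing the two geometric assertions separately; in both cases this is just the Hardy--Littlewood--Sobolev plus Sobolev embedding bookkeeping already carried out for Lemmas~\ref{MGP} and~\ref{Link}, together with the equivalence of norms from Lemma~\ref{equiv-norm}.

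For assertion (i) I would first record the variational characterisation of the eigenvalues: since $\{e_j\}_{j\in\mb N}$ is an orthonormal basis of $X_0$ diagonalising the quadratic form $[\,\cdot\,]_X^2$ and simultaneously orthogonal for the $L^2(\Om)$ inner product, one has $\|u\|^2 \ge \la_{j+1}\,|u|_2^2$ for every $u \in \mb D_j^{\perp}$. Using the hypothesis $\la < \la_{j+1}$, this gives for such $u$
\[
I(u) \;\ge\; \frac12\Big(1-\frac{\la}{\la_{j+1}}\Big)\|u\|^2 \;-\; \frac{1}{2\cdot 2^*_{\mu,s}}\int_{\Om}\int_{\Om}\frac{|u(x)|^{2^*_{\mu,s}}|u(y)|^{2^*_{\mu,s}}}{|x-y|^{\mu}}\,\mathrm{d}x\mathrm{d}y .
\]
Then, exactly as in Lemma~\ref{MGP}(i), the Hardy--Littlewood--Sobolev inequality followed by the embedding $X_0 \hookrightarrow L^{2^*_s}(\Om)$ bounds the double integral by $C\|u\|^{\frac{2(2n-\mu)}{n-2s}}$. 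Since $0<\mu<n$ forces $\frac{2(2n-\mu)}{n-2s}>2$ and the coefficient $1-\la/\la_{j+1}$ is strictly positive, the scalar function $\rho\mapsto \frac12(1-\la/\la_{j+1})\rho^2 - C\rho^{\frac{2(2n-\mu)}{n-2s}}$ is strictly positive for $\rho>0$ small; fixing such a $\rho$ and taking $\beta$ equal to this value finishes (i).

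For assertion (iii), on the finite dimensional subspace $\mb E$ all norms are equivalent, so Lemma~\ref{equiv-norm} yields a constant $c_{\mb E}>0$ with $\int_{\Om}\int_{\Om}|x-y|^{-\mu}|u(x)|^{2^*_{\mu,s}}|u(y)|^{2^*_{\mu,s}}\,\mathrm{d}x\mathrm{d}y \ge c_{\mb E}\|u\|^{2\cdot 2^*_{\mu,s}}$ for all $u\in\mb E$, while trivially $\|u\|^2 - \la|u|_2^2 \le C_{\mb E}\|u\|^2$. Hence $I(u) \le \frac{C_{\mb E}}{2}\|u\|^2 - \frac{c_{\mb E}}{2\cdot 2^*_{\mu,s}}\|u\|^{2\cdot 2^*_{\mu,s}}$, and since $2\cdot 2^*_{\mu,s}=\frac{2(2n-\mu)}{n-2s}>2$ the right-hand side tends to $-\infty$ as $\|u\|\to\infty$, so it is $\le 0$ once $\|u\|\ge R$ for $R>\rho$ chosen large enough; alternatively one may argue direction-by-direction over a basis of $\mb E$, as in Lemma~\ref{MGP}(ii).

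I expect no genuine obstacle: the only point requiring a little care is the spectral inequality $\|u\|^2\ge\la_{j+1}|u|_2^2$ on $\mb D_j^{\perp}$, which is precisely where the hypothesis $\la<\la_{j+1}$ enters and which rests on $\{e_j\}$ being an orthonormal basis of $X_0$ that is also $L^2$-orthogonal; everything else is a routine repetition of the estimates already performed in Lemmas~\ref{MGP}, \ref{Link} and~\ref{equiv-norm}.
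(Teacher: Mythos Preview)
Your proposal is correct and matches the paper's approach: the paper's own proof simply reads ``Proof follows similar to proof of lemma~\ref{Link}'', and what you have written is exactly that adaptation, using the spectral inequality $\|u\|^2\ge\la_{j+1}|u|_2^2$ on $\mb D_j^\perp$ together with Hardy--Littlewood--Sobolev and the Sobolev embedding for (i), and finite-dimensional norm equivalence (or the direction-by-direction variant from Lemma~\ref{Link}(iii)) for (iii).
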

\begin{proof}
Proof follows similar to proof of lemma \ref{Link}.\hfill{\QED}
\end{proof}

\begin{Lemma}\label{br4}
The following holds, for $1\leq m\leq q$,
\[\beta \leq c^*_{j+m}<  \frac{n+2s-\mu}{2(2n-\mu)} (S^H_s)^{\frac{2n-\mu}{n+2s-\mu}}.\]
\end{Lemma}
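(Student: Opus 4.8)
A reading of the statement first: write $j$ for the integer with $\la\in[\la_j,\la_{j+1})$ (so $j=0$ and $\mb D_0=\{0\}$ when $\la<\la_1$), so that $\Upsilon$ consists of the $q$ eigenvalues $\la_{j+1}\le\cdots\le\la_{j+q}$, each satisfying $\la_{j+m}-\la<\la_*$. Lemma \ref{br4} is precisely the verification of the two hypotheses needed to apply Lemma \ref{br2} with $V_1=\mb D_j$ and $V_2=\mb D_{j+q}$: that $c^*_{j+m}>0$, and that $c^*_{j+m}$ stays strictly below the Palais--Smale threshold $\frac{n+2s-\mu}{2(2n-\mu)}(S^H_s)^{\frac{2n-\mu}{n+2s-\mu}}$ of Lemma \ref{PSlevel}. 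The plan is to prove the lower and upper estimates in turn.

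For the lower bound, let $A\in\sum^*$ with $i^*(A)\ge j+m$ (hence $\ge j+1$). The dilation $h_\rho(v)=\rho v$ is an odd homeomorphism of $X_0$ with $h_\rho(B_1)=B_\rho\subset I^{-1}(0,+\infty)\cup B_\rho$, so $h_\rho\in\Lambda_*(\rho)$ and therefore $\gamma(A\cap\partial B_\rho)=\gamma\big(A\cap h_\rho(\partial B_1)\big)\ge i^*(A)\ge j+1$. Since $\mb D_j^{\perp}$ (the orthogonal complement in $X_0$ of $\text{span}\{e_1,\dots,e_j\}$) has codimension $j<\gamma(A\cap\partial B_\rho)$, Proposition \ref{br1}(i) provides a point $u\in A\cap\partial B_\rho\cap\mb D_j^{\perp}$; then $I(u)\ge\beta$ by Lemma \ref{br3}(i), whence $\sup_A I\ge\beta$, and taking the infimum over admissible $A$ gives $c^*_{j+m}\ge\beta>0$.

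For the upper bound, since $c^*_{j+m}\le c^*_{j+q}$, it suffices to exhibit one $A_0\in\sum^*$ with $i^*(A_0)\ge j+q$ and $\sup_{A_0}I$ below the threshold. Following the pseudo-index construction of \cite{myang}, $A_0$ is taken to be a $(j+q)$-dimensional symmetric set contained in $\mb D_{j+q}:=\mb D_j\oplus W_q$, where $W_q:=\text{span}\{e_{j+1},\dots,e_{j+q}\}\subset\mb D_j^{\perp}$, built on this splitting just as the linking set in the proof of Theorem \ref{thrm1}, Case 2, and satisfying $i^*(A_0)\ge j+q$. Since the Rayleigh-type quotient occurring in \eqref{G-min1} is scale invariant and $A_0\subset\mb D_{j+q}$, the scaling identity \eqref{G-min1} gives
\[
\sup_{A_0}I\ \le\ \max_{t\ge0,\,u\in\mb D_{j+q}}I(tu)\ =\ \frac{n+2s-\mu}{2(2n-\mu)}\left(\ \sup_{0\ne u\in\mb D_{j+q}}\frac{\|u\|^2-\la\int_\Om u^2}{\big(\int_\Om\int_\Om\frac{|u(x)|^{2^*_{\mu,s}}|u(y)|^{2^*_{\mu,s}}}{|x-y|^{\mu}}\,\mathrm{d}x\,\mathrm{d}y\big)^{\frac{n-2s}{2n-\mu}}}\ \right)^{\frac{2n-\mu}{n+2s-\mu}},
\]
the bracket being read as $0$ if the quotient is $\le 0$ on $\mb D_{j+q}$. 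So everything reduces to showing that this quotient is $<S^H_s$ for every $0\ne u\in\mb D_{j+q}$. Decomposing $u=v+w$ with $v=\sum_{i\le j}a_ie_i\in\mb D_j$ and $w=\sum_{j<i\le j+q}b_ie_i\in W_q$, which are orthogonal both in $X_0$ and (after $L^2$-normalizing the $e_i$) in $L^2(\Om)$, one has
\[
\|u\|^2-\la\!\int_\Om u^2=\sum_{i=1}^{j}(\la_i-\la)a_i^2+\sum_{i=j+1}^{j+q}(\la_i-\la)b_i^2\ \le\ (\la_{j+q}-\la)\!\int_\Om w^2\ <\ \la_*\!\int_\Om u^2,
\]
using $\la_i\le\la$ for $i\le j$ and $\la_{j+q}-\la<\la_*$. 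By H\"older $\int_\Om u^2\le|\Om|^{2s/n}|u|_{2^*_s}^2$; by Lemma \ref{equiv-norm} (with its constant $C'$) $(C')^2|u|_{2^*_s}^2\le\big(\int_\Om\int_\Om\frac{|u(x)|^{2^*_{\mu,s}}|u(y)|^{2^*_{\mu,s}}}{|x-y|^{\mu}}\,\mathrm{d}x\,\mathrm{d}y\big)^{\frac{n-2s}{2n-\mu}}$; and $\la_*=S^H_s(C')^2|\Om|^{-2s/n}$ by definition. Chaining these inequalities yields $\|u\|^2-\la\int_\Om u^2<S^H_s\big(\int_\Om\int_\Om\dots\big)^{\frac{n-2s}{2n-\mu}}$, i.e. the quotient is $<S^H_s$. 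Since $\{u\in\mb D_{j+q}:\int_\Om\int_\Om\frac{|u(x)|^{2^*_{\mu,s}}|u(y)|^{2^*_{\mu,s}}}{|x-y|^{\mu}}\,\mathrm{d}x\,\mathrm{d}y=1\}$ is compact (again Lemma \ref{equiv-norm}), the supremum above is attained, hence strictly $<S^H_s$; substituting into the displayed identity gives $c^*_{j+m}\le\sup_{A_0}I<\frac{n+2s-\mu}{2(2n-\mu)}(S^H_s)^{\frac{2n-\mu}{n+2s-\mu}}$, as required.

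The estimate is short once one observes that the spectral splitting annihilates all cross terms — this is exactly the calibration encoded in $\la_*$. The genuinely delicate point is the pseudo-index bookkeeping: checking $h_\rho\in\Lambda_*(\rho)$ and, above all, producing a concrete symmetric set $A_0\subset\mb D_{j+q}$ with $i^*(A_0)\ge j+q$, which rests on the genus properties of Proposition \ref{br1} and should be carried out exactly as in \cite{myang}.
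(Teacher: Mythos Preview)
Your argument matches the paper's in structure and in all the analytic estimates: the lower bound via the dilation $h_\rho\in\Lambda_*(\rho)$, Proposition~\ref{br1}(i) and Lemma~\ref{br3}(i), and the upper bound via the Rayleigh-quotient identity \eqref{G-min1} together with $\la_{j+m}-\la<\la_*$, H\"older, Lemma~\ref{equiv-norm} and the definition of $\la_*$, are exactly what the paper does.

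The one genuine gap is your construction of $A_0$. You analogize it to ``the linking set in the proof of Theorem~\ref{thrm1}, Case~2'' and then defer the pseudo-index verification to \cite{myang}; but that linking set is built around $u_\epsilon$, which plays no role here, and leaving the verification open is precisely the point you yourself flag as ``genuinely delicate''. The paper's choice is both simpler and explicit: take $\tilde A=\mb D_{j+m}\cap\bar B_R$ with $R$ from Lemma~\ref{br3}. For any $h\in\Lambda_*(\rho)$ one has $h(B_1)\subset I^{-1}(0,+\infty)\cup B_\rho$; since $I\le 0$ on $\mb D_{j+m}\setminus B_R$ and $\rho<R$, this forces $\mb D_{j+m}\cap h(B_1)\subset\tilde A$, hence $\tilde A\cap h(\partial B_1)\supset\mb D_{j+m}\cap h(\partial B_1)$, and the latter has genus $\ge j+m$ by the standard Borsuk--Ulam/intersection property for odd homeomorphisms. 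Thus $i^*(\tilde A)\ge j+m$ directly, with no need to pass through $m=q$ by monotonicity as you do. Once this is in hand, the paper's estimate $\|\tilde u\|^2-\la\int_\Om\tilde u^2\le(\la_{j+m}-\la)\int_\Om\tilde u^2$ for $\tilde u\in\mb D_{j+m}$ is the same as (indeed slightly shorter than) your orthogonal decomposition.
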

\begin{proof}
Let $A \in \sum^*$ and $i^*(A) \geq j+m$. We set $f=\rho.id$, where $\rho$ is obtained in Lemma \ref{br2} and $id$ is the identity map. Then it can be easily checked that $f \in \Gamma_* $ and
\[\gamma(A \cap \partial B_\rho) = \gamma(A \cap \partial f(\partial B_1)) \geq \inf_{f \in \Gamma_*(\rho)}\gamma(A \cap f(\partial B_1)) = i^*(A) \geq j+m. \]
Thus, using Proposition \ref{br1}(i), we get $A \cap \partial B_\rho \cap \mb D_r^{\perp} \neq \emptyset$. Then lemma \ref{br3}(i) gives
\[\sup_{u \in A}I(u) \geq \inf_{u \in \partial B_\rho \cap \mb D_r^{\perp}}I(u) \geq \beta.\]
Since $A$ is arbitrary, $\beta \leq c^*_{j+m}$. Now, we define $\tilde{A}= \mb D_{j+m} \cap \bar{B_R} \in \sum^*$. So, for any $f \in \Gamma_*(\rho)(0< \rho < R)$, we have
\[\tilde{A} \supset  \mb D_{j+m} \cap (I^{-1}(0,+\infty) \cup B_\rho) \supset \mb D_{j+m} \cap h(B_1). \]
Using definition of pseudo-index, $i^*(\tilde A) \geq j+m$ and from definition of $c^*_{j+m}$, we get $c^*_{j+m} \leq \sup_{u \in \tilde{A}} I(u)$. Using compactness of $\tilde{A}$, we obtain $\tilde{u} \in \tilde{A}$ such that
\[I(\tilde{u})= \sup_{u \in \tilde{A}} I(u).\]
Hence, $c^*_{j+m} \leq I(\tilde{u}) = \max_{t>0}I(t\tilde{u})$. Now, using the value of $\la_*$, Sobolev embedding, Hardy-Littlewood-Sobolev inequality and the fact that $\tilde{u} \in \tilde{A}$, we have
\begin{equation*}
\begin{split}
\max_{t\geq 0}I(tu) &= \frac{n+2s-\mu}{2(2n-\mu)}\left( \frac{\|\tilde{u}\|^2 - \la \int_{\Om}|\tilde{u}|^2\mathrm{d}x}{\left( \int_{\Om}\int_{\Om}\frac{|\tilde{u}(x)|^{2^*_{\mu,s}}|\tilde{u}(y)|^{2^*_{\mu,s}}}{|x-y|^{\mu}}~\mathrm{d}x\mathrm{d}y \right)^{\frac{n-2s}{2n-\mu}}}  \right)^{\frac{2n-\mu}{n+2s-\mu}}\\
& \leq \frac{n+2s-\mu}{2(2n-\mu)}\left( \frac{(\la_{k+m-\la})\int_{\Om}\tilde{u}^2\mathrm{d}x}{\left( \int_{\Om}\int_{\Om}\frac{|\tilde{u}(x)|^{2^*_{\mu,s}}|\tilde{u}(y)|^{2^*_{\mu,s}}}{|x-y|^{\mu}}~\mathrm{d}x\mathrm{d}y \right)^{\frac{n-2s}{2n-\mu}}}  \right)^{\frac{2n-\mu}{n+2s-\mu}}\\
& \leq \frac{n+2s-\mu}{2(2n-\mu)}\left( \frac{\la_*|\Om|^{\frac{2s}{n}}\left(\int_{\Om}\tilde{u}^{2^*_s}\right)^{\frac{2}{2^*_s}}}{(C^{\prime})^2\left(\int_{\Om}\tilde{u}^{2^*_s}\right)^{\frac{2}{2^*_s}}}  \right)^{\frac{2n-\mu}{n+2s-\mu}} < \frac{n+2s-\mu}{2(2n-\mu)} (S^H_s)^{\frac{2n-\mu}{n+2s-\mu}}.
\end{split}
\end{equation*}
Therefore, $ c^*_{j+m}<  \frac{n+2s-\mu}{2(2n-\mu)} (S^H_s)^{\frac{2n-\mu}{n+2s-\mu}}$.\hfill{\QED}
\end{proof}

\noi \textbf{Proof of Theorem \ref{thrm2}:} Since all the conditions of Lemma \ref{br2} holds, using Lemma \ref{PSlevel} and \ref{br4}, we get the $(PS)_{c^*_{j+m}}$, for $1\leq m\leq q$. Thus, problem $(P_\la)$ has atleast $q$ distinct pairs of solution.\hfill{\QED}

\section{Regularity of weak solutions}
In this section, we prove that any weak solution of $(P_\la)$ is bounded and moreover loclly Holder continuous. First we   we prove Theorem \ref{thrm4}.
\begin{Theorem}\label{Linftes}
Let $0 \leq u \in X_0$, $n>2s$ and $\la >0$ be such that
\begin{equation*}
\begin{split}
&\int_{Q}\frac{(u(x)-u(y))(\varphi(x)-\varphi(y))}{|x-y|^{n+2s}}~\mathrm{d}x\mathrm{d}y\\
&\quad = \int_{\Om}\int_{\Om}\frac{|u(y)|^{2^*_{\mu,s}}|u(x)|^{2^*_{\mu,s}-2}u(x)\varphi(x)}{|x-y|^{\mu}}~\mathrm{d}y\mathrm{d}x+ \la \int_{\Om}u\varphi ~dx,
\end{split}
\end{equation*}
for every $\varphi \in C_c^{\infty}(\Om)$, i.e. $u$ is a nonnegative weak solution of $(P_\la)$. Then, $u \in L^{\infty}(\Om)$.
\end{Theorem}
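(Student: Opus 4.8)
The plan is to run a fractional Moser--Brezis--Kato bootstrap, absorbing the nonlocal factor into a linear ``potential''. Introduce
\[\Phi(x):=\int_{\Om}\frac{|u(y)|^{2^*_{\mu,s}}}{|x-y|^{\mu}}\,\mathrm dy .\]
Since $u\in X_0\hookrightarrow L^{2^*_s}(\Om)$, we have $|u|^{2^*_{\mu,s}}\in L^{2n/(2n-\mu)}(\Om)$, so, exactly as in the proof of Lemma~\ref{PSbdd}, the Riesz potential maps it into $L^{2n/\mu}(\Om)$; hence $\Phi\in L^{2n/\mu}(\Om)$. The weak equation (which, by density of $C_c^{\infty}(\Om)$ in $X_0$, holds for every $\varphi\in X_0$) then reads $(-\De)^su=\Phi\,u^{2^*_{\mu,s}-1}+\la u$. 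The point I would establish first is that the linearised coefficient $V:=\Phi\,u^{2^*_{\mu,s}-2}$ lies in the borderline space $L^{n/(2s)}(\Om)$: by H\"older, using $\Phi\in L^{2n/\mu}(\Om)$ and $u^{2^*_{\mu,s}-2}\in L^{2^*_s/(2^*_{\mu,s}-2)}(\Om)$, the exponents add up because $\tfrac{\mu}{2n}+\tfrac{2^*_{\mu,s}-2}{2^*_s}=\tfrac{2s}{n}$. This is the fractional analogue of the $L^{n/2}$ potential class in the classical Brezis--Kato lemma. (When $2^*_{\mu,s}<2$, i.e.\ $\mu>4s$, the power $u^{2^*_{\mu,s}-2}$ need not be locally integrable and the reduction must instead be carried out by working on $\{u\le 1\}$ and $\{u>1\}$ separately; I would treat the range $2^*_{\mu,s}\ge 2$ in detail and handle the complementary one as a technical variant.)

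The main computation is a truncated Moser iteration. For $T>0$ set $u_T:=\min\{u,T\}$ and, for $\beta\ge 1$, take the admissible test function $\varphi=u\,u_T^{2(\beta-1)}\in X_0$ (it is bounded, vanishes outside $\Om$, and is a Lipschitz function of $u$). Writing $w_T:=u\,u_T^{\beta-1}$, the elementary convexity inequality for the Gagliardo quadratic form gives
\[\int_Q\frac{|w_T(x)-w_T(y)|^2}{|x-y|^{n+2s}}\,\mathrm dx\,\mathrm dy\ \le\ C\beta\,\langle(-\De)^su,\varphi\rangle\ =\ C\beta\Big(\int_{\Om}\Phi\,u^{2^*_{\mu,s}}u_T^{2(\beta-1)}\,\mathrm dx+\la\int_{\Om}u^2u_T^{2(\beta-1)}\,\mathrm dx\Big),\]
and the two integrals on the right are $\int_\Om V\,w_T^2$ and $\la\int_\Om w_T^2$, since $u^2u_T^{2(\beta-1)}=w_T^2$. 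By the definition of $S_s$ the left side dominates $S_s\,\|w_T\|_{L^{2^*_s}(\Om)}^2$. Now decompose $V=V\chi_{\{V\le M\}}+V\chi_{\{V>M\}}$; by absolute continuity of the integral, $\|V\chi_{\{V>M\}}\|_{L^{n/(2s)}}\to 0$ as $M\to\infty$, so we may pick $M=M(\beta)$ with $C\beta\,S_s^{-1}\|V\chi_{\{V>M\}}\|_{L^{n/(2s)}}\le\tfrac12$ and use H\"older to absorb $C\beta\int V\chi_{\{V>M\}}w_T^2\le C\beta\|V\chi_{\{V>M\}}\|_{L^{n/(2s)}}\|w_T\|_{L^{2^*_s}}^2$ into the left side, leaving
\[\tfrac12 S_s\,\|w_T\|_{L^{2^*_s}(\Om)}^2\ \le\ C\beta\big(M(\beta)+\la\big)\|w_T\|_{L^2(\Om)}^2 .\]
Finally $\|w_T\|_{L^2}^2=\int_\Om u^2u_T^{2(\beta-1)}\le|\Om|+\|u\|_{L^{2\beta}(\Om)}^{2\beta}$, which is finite and \emph{independent of $T$} provided $u\in L^{2\beta}(\Om)$; letting $T\to\infty$ and invoking Fatou's lemma yields $u^\beta\in L^{2^*_s}(\Om)$, i.e.\ $u\in L^{2^*_s\beta}(\Om)$.

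Starting from $u\in L^{2^*_s}(\Om)$ and iterating with $\beta_k=q_k/2$, $q_0=2^*_s$, gives $q_{k+1}=\tfrac{2^*_s}{2}q_k$ with $\tfrac{2^*_s}{2}=\tfrac{n}{n-2s}>1$, so $u\in L^q(\Om)$ for every $q<\infty$. Then $|u|^{2^*_{\mu,s}}\in L^r(\Om)$ for every $r<\infty$, in particular for some $r>\tfrac{n}{n-\mu}$, so its Riesz potential $\Phi$ is bounded (indeed H\"older continuous) on $\Om$; consequently the right-hand side $\Phi\,u^{2^*_{\mu,s}-1}+\la u$ belongs to $L^q(\Om)$ for every $q<\infty$. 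Choosing $q>\tfrac{n}{2s}$ and representing $u$ via the Green operator of $(-\De)^s$ on $\Om$ (whose kernel is bounded by $c_{n,s}|x-y|^{2s-n}\in L^{q'}(\Om)$ since $q'<\tfrac{n}{n-2s}$), or equivalently performing one more Moser step with the now bounded coefficient, we conclude $u\in L^\infty(\Om)$.

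\textbf{Main obstacle.} The crux is that $V$ only belongs to the \emph{critical} space $L^{n/(2s)}(\Om)$, so no integrability is gained in a single uniform estimate; what makes the scheme work is the combination of the truncation $u_T$ (which both legitimises the test function and permits passing to the limit by Fatou) with the splitting of $V$ into a bounded part and a part of small $L^{n/(2s)}$-norm, at the price of the threshold $M$ depending on the iteration exponent $\beta$ --- harmless, since each step is carried out separately. The only other delicate point is the regime $2^*_{\mu,s}<2$, where the clean identification of the nonlinear term with $\int_\Om V w_T^2$ fails and one must estimate on the level sets of $u$ instead.
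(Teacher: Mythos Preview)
Your argument is correct (for the main range $2^*_{\mu,s}\ge 2$; the $\mu>4s$ variant you flag does require a separate treatment, but your suggested fix via level sets is the standard one). However, the route is genuinely different from the paper's. The paper does \emph{not} run a Moser power iteration: it uses a De Giorgi--Stampacchia level-set scheme. After rescaling $v=u/c$ so that $|v|_{2^*_s}=\delta$ is small, it sets $C_k=1-2^{-k}$, $w_k=(v-C_k)^+$, tests the equation with $w_{k+1}$, and derives a nonlinear recursion of the form $U_{k+1}\le D^{k+1}(U_k^{2^*_{\mu,s}}+U_k^{2^*_s/2})$ for $U_k=|w_k|_{2^*_s}$; a suitable smallness of $\delta$ then forces $U_k\to 0$, i.e.\ $v\le 1$ a.e., giving $u\in L^\infty$ in one shot.

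What each buys: the De Giorgi approach avoids your case distinction $2^*_{\mu,s}\gtrless 2$ entirely, because the test functions are bare truncations and the convolution term is estimated by splitting over $\{v(y)\le C_{k+1}\}$ and $\{v(y)>C_{k+1}\}$ rather than by writing a pointwise potential $V=\Phi\,u^{2^*_{\mu,s}-2}$; it also delivers $L^\infty$ directly, with no intermediate $L^q$ step or Green-kernel representation. Your Moser/Brezis--Kato route, on the other hand, is more modular: the identification $V\in L^{n/(2s)}$ cleanly isolates the critical nature of the nonlinearity, the intermediate conclusion $u\in L^q$ for all $q<\infty$ is itself useful, and the final boundedness of $\Phi$ followed by one elliptic step is a transparent endgame. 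Both are standard strategies; the paper's choice is slightly more self-contained in that it never invokes Green-function bounds or the $2^*_{\mu,s}<2$ workaround.
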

\begin{proof}
We may assume that $u$ does not vanish identically (otherwise the proof is trivial) and let $u$ be nonnegative. Let $\delta>0$, to be chosen appropriately small whose choice will be done on \eqref{del} later in proof. Now, let $c>0$ be a constant chosen in such a way that for any $x \in \mb R^n$, $v(x) := \frac{u(x)}{c} \in X_0$ satisfies
\begin{equation}\label{reg1}
\begin{split}
&\int_{Q}\frac{(v(x)-v(y))(\varphi(x)-\varphi(y))}{|x-y|^{n+2s}}~\mathrm{d}x\mathrm{d}y\\
&\quad \leq \int_{\Om}\int_{\Om}\frac{|v(y)|^{2^*_{\mu,s}}|v(x)|^{2^*_{\mu,s}-2}v(x)\varphi(x)}{|x-y|^{\mu}}~\mathrm{d}y\mathrm{d}x+ \la \int_{\Om}v\varphi ~dx,
\end{split}
\end{equation}
for every $0 \leq \varphi \in C_c^{\infty}(\Om)$ and $|v|_{2^*_s}= \delta$. It is a simple observation that if $v \in X_0$, then $v^+ := \max\{v,0\}$ satisfies
\begin{equation}\label{reg2}
(v(x)-v(y))(v^+(x)-v^+(y)) \geq |v^+(x)-v^+(y)|^2,
\end{equation}
for any $x,y \in \mb R^n$. Let us set $C_k := 1- 2^{-k}$, $v_k := v- C_k$, $w_k := v_k^+ \in X_0$ and $U_k := |w_k|_{2^*_s}$. We get that
\[ 0 \leq |v|+ C_k \leq |v|+1 \in L^2(\Om) \subset L^{2^*_s}(\Om),\]
being $\Om$ bounded, and
\[\lim_{k \rightarrow +\infty} w_k = (v-1)^+. \]
Therefore, by the Dominated Convergence Theorem,
\begin{equation}\label{reg3}
\lim_{k \rightarrow +\infty} U_k = \left(\int_{\Om}[(v-1)^+]^{2^*_s}~\mathrm{d}x\right)^{\frac{1}{2^*_s}}.
\end{equation}
For any $k \in \mb N$, $C_{k+1}>C_k$ and so $w_{k+1} \leq w_k$ a.e. in $\mb R^n$. Also let $A_k := C_{k+1}/(C_{k+1}-C_k) = 2^{k+1}-1$, for any $k \in \mb R^n$. We claim that for any $k \in \mb N$
\begin{equation} \label{reges}
v < A_k w_k \; \text{on} \; \{w_{k+1}>0\}.
\end{equation}
To check this, let $x \in \{w_{k+1}>0\}$. Then $v(x) > C_{k+1}> C_k$, so $w_k(x)= v_k(x)=v(x)-C_k $ and
\begin{equation*}
A_k w_k(x) = v(x)+ \frac{C_k}{C_{k+1}-C_k}(v(x)- C_{k+1})> v(x).\end{equation*}
Notice also that $v_{k+1}(x)- v_{k+1}(y)= v(x)- v(y)$, for any $x,y \in \mb R^n$. Using this, \eqref{reg1}, \eqref{reges},
\eqref{reg2}, H\"{o}lder's inequality  and the fact that $w_{k+1}= v_{k+1}^+ \in X_0$, we get
\begin{align}\label{reg4}
&\int_{\mb R^{2n}}\frac{|w_{k+1}(x)- w_{k+1}(y)|^2}{|x-y|^{n+2s}}~\mathrm{d}x\mathrm{d}y = \int_{Q}\frac{|v_{k+1}^+(x)- v_{k+1}^+(y)|^2}{|x-y|^{n+2s}}~\mathrm{d}x\mathrm{d}y\notag\\
& \leq \int_{\mb R^{2n}}\frac{(v_{k+1}(x)- v_{k+1}(y))(v_{k+1}^+(x)- v_{k+1}^+(y))}{|x-y|^{n+2s}}~\mathrm{d}x\mathrm{d}y\notag\\
& = \int_{\mb R^{2n}}\frac{(v(x)- v(y))(v_{k+1}^+(x)- v_{k+1}^+(y))}{|x-y|^{n+2s}}~\mathrm{d}x\mathrm{d}y\notag\\
& \leq \int_{\Om}\int_{\Om}\frac{|v(y)|^{2^*_{\mu,s}}|v(x)|^{2^*_{\mu,s}-2}v(x)w_{k+1}(x)}{|x-y|^{\mu}}~\mathrm{d}y\mathrm{d}x+ \la \int_{\Om}v(x) w_{k+1}(x) ~dx\notag\\
& = \int_{\{w_{k+1}(x)>0\}}\int_{\Om}\frac{|v(y)|^{2^*_{\mu,s}}|v(x)|^{2^*_{\mu,s}-2}v(x)w_{k+1}(x)}{|x-y|^{\mu}}~\mathrm{d}y\mathrm{d}x+ \la \int_{\{w_{k+1}>0\}}v(x) w_{k+1}(x) ~dx\notag\\
& \leq A_k^{2^*_{\mu,s}-1} \int_{\{w_{k+1}(x)>0\}}\int_{\Om}\frac{|v(y)|^{2^*_{\mu,s}}|w_k(x)|^{2^*_{\mu,s}-1}w_{k+1}(x)}{|x-y|^{\mu}}~\mathrm{d}y\mathrm{d}x+ \la A_k \int_{\{w_{k+1}>0\}}w_k^2(x)~dx\notag\\
& \leq A_k^{2^*_{\mu,s}-1} \int_{\{w_{k+1}(x)>0\}}\int_{\Om}\frac{|v(y)|^{2^*_{\mu,s}}|w_k(x)|^{2^*_{\mu,s}}}{|x-y|^{\mu}}~\mathrm{d}y\mathrm{d}x+ \la 2^{k+1} |w_k|^2_{2^*_s}|\{w_{k+1}>0\}|^{\frac{2s}{n}}~dx.
\end{align}
Let us consider the first integral of R.H.S. of above inequality separately and we get that
\begin{equation}\label{reg5}
\begin{split}
&\int_{\{w_{k+1}(x)>0\}}\int_{\Om}\frac{|v(y)|^{2^*_{\mu,s}}|w_k(x)|^{2^*_{\mu,s}}}{|x-y|^{\mu}}~\mathrm{d}y\mathrm{d}x\\
& \leq \left(\int_{\{w_{k+1}(x)>0\}}\int_{\{v(y)\leq C_{k+1}\}} +
\int_{\{w_{k+1}(x)>0\}}\int_{\{v(y)>C_{k+1}\}}\right)\frac{|v(y)|^{2^*_{\mu,s}}|w_k(x)|^{2^*_{\mu,s}}}{|x-y|^{\mu}}~\mathrm{d}y\mathrm{d}x\\
& = I_1+ I_2, \; \text{(say)}.
\end{split}
\end{equation}
Now using \eqref{reges} and Hardy- Littlewood- Sobolev inequality, we have
\begin{equation}\label{reg6}
\begin{split}
I_1=& \int_{\{w_{k+1}(x)>0\}}\int_{\{w_{k+1}(y)>0\}}\frac{|v(y)|^{2^*_{\mu,s}}|w_k(x)|^{2^*_{\mu,s}}}{|x-y|^{\mu}}~\mathrm{d}y\mathrm{d}x\\
&\leq A_k^{2^*_{\mu,s}} \frac{|w_k(y)|^{2^*_{\mu,s}}|w_k(x)|^{2^*_{\mu,s}}}{|x-y|^{\mu}}~\mathrm{d}y\mathrm{d}x \leq A_k^{2^*_{\mu,s}} C(n,\mu) |w_k|_{2^*_s}^{22^*_{\mu,s}}.
 \end{split}
\end{equation}
Next, again using \eqref{reges} and H\"{o}lder's inequality we have
\begin{equation}\label{reg7}
\begin{split}
I_2&=  \int_{\{w_{k+1}(x)>0\}}\int_{\{w_{k+1}(y)>0\}}\frac{|v(y)|^{2^*_{\mu,s}}|w_k(x)|^{2^*_{\mu,s}}}{|x-y|^{\mu}}~\mathrm{d}y\mathrm{d}x\\
& \leq C_{k+1}^{2^*_{\mu,s}} \int_{\{w_{k+1}(x)>0\}} |w_k(x)|^{2^*_{\mu,s}} \int_{\Om}\frac{\mathrm{d}y}{|x-y|^{\mu}}~\mathrm{d}x\\
& \leq M  C_{k+1}^{2^*_{\mu,s}} \int_{\{w_{k+1}(x)>0\}} |w_k(x)|^{2^*_{\mu,s}}\mathrm{d}x\\
& \leq M  C_{k+1}^{2^*_{\mu,s}} |\{w_{k+1}>0\}|^{\frac{\mu}{2n}} |w_k|_{2^*_s}^{2^*_{\mu,s}}.
\end{split}
\end{equation}
Using \eqref{reg5}, \eqref{reg6}, \eqref{reg7} and Sobolev inequality in \eqref{reg4}, we get
\begin{equation}\label{reg8}
\begin{split}
S_s |w_{k+1}|_{2^*_s}^2 &\leq \int_{\mb R^{2n}}\frac{|w_{k+1}(x)- w_{k+1}(y)|^2}{|x-y|^{n+2s}}~\mathrm{d}x\mathrm{d}y \\
& \leq  A_k^{2^*_{\mu,s}-1} \left( A_k^{2^*_{\mu,s}} C(n,\mu) |w_k|_{2^*_s}^{22^*_{\mu,s}}+
 M  C_{k+1}^{2^*_{\mu,s}} |\{w_{k+1}>0\}|^{\frac{\mu}{2n}} |w_k|_{2^*_s}^{2^*_{\mu,s}}\right.\\
 & \quad \left.+ 2^{k+1} |w_k|^2_{2^*_s}|\{w_{k+1}>0\}|^{\frac{2s}{n}}~dx\right).
\end{split}
\end{equation}
Now we claim that
\begin{equation}\label{reg9}
\{w_{k+1}>0\} \subset \{w_k > 2^{-(k+1)}\}.
\end{equation}
To establish this, we observe that if $x \in \{w_{k+1}>0\}$ then
\[v(x)-C_{k+1}>0.\]
Accordingly, $v_k(x) = v(x)- C_{k}> C_{k+1}- C_k = 2^{-(k+1)}$, so that, $$w_k(x)= v_k(x)> 2^{-(k+1)}.$$ Thus, \eqref{reg9} gives
 \begin{equation}\label{reg10}
 U_k^{2^*_s} = |w_k|_{2^*_s} \geq \int_{\{w_k > 2^{-(k+1)}\}}w_k^{2^*_s}~\mathrm{d}x
 \geq 2^{-(k+1)}|\{w_{k+1}>0\}|.
 \end{equation}
 As a consequence of \eqref{reg10}, from \eqref{reg8} we get
\begin{equation}\label{reg11}
\begin{split}
S_s |w_{k+1}|_{2^*_s}^2 &\leq  A_k^{2^*_{\mu,s}-1}  \left( A_k^{2^*_{\mu,s}} C(n,\mu) |w_k|_{2^*_s}^{22^*_{\mu,s}}+
 M  C_{k+1}^{2^*_{\mu,s}} 2^{\frac{\mu(k+1)}{2n}} |w_k|_{2^*_s}^{2^*_{s}}\right.\\
 & \quad \left.+ 2^{k+1} |w_k|^{2^*_s}_{2^*_s}2^{\frac{2s(k+1)}{n}}~dx\right)\\
 & \leq  2^{(2^*_{\mu,s}-1)(k+1)}  \left( 2^{2^*_{\mu,s}(k+1)} C(n,\mu) |w_k|_{2^*_s}^{22^*_{\mu,s}}+
 M 2^{\frac{\mu(k+1)}{2n}} |w_k|_{2^*_s}^{2^*_{s}}\right.\\
 & \quad \left.+ 2^{k+1} |w_k|^{2^*_s}_{2^*_s}2^{\frac{2s(k+1)}{n}}~dx\right)\\
 & \leq 2^{(2^*_{\mu,s}-1)(k+1)}\max\{2^{2^*_{\mu,s}(k+1)} C(n,\mu), M 2^{\frac{\mu(k+1)}{2n}}+ 2^{(k+1)(1+\frac{2s}{n})}\}\times\\
 & \quad \left( |w_k|_{2^*_s}^{22^*_{\mu,s}}+ |w_k|^{2^*_s}_{2^*_s} \right).
 \end{split}
\end{equation}
Therefore using definition of $U_k$ in \eqref{reg11}, we get
\begin{equation}\label{reg12}
U_{k+1} \leq  D^{(k+1)} \left(  U_k^{2^*_{\mu,s}}+ U_k^{\frac{2^*_s}{2}}\right),
\end{equation}
where, $D = \left(1+ (2^{(2^*_{\mu,s}-1)}\max\{2^{2^*_{\mu,s}} C(n,\mu), M 2^{\frac{\mu}{2n}}+ 2^{(1+\frac{2s}{n})}\})^{1/2}\right)>1$ and $2^*_{\mu,s}>{2^*_s}/{2} >1 $.\\
Now we are ready to perform our choice of $\delta$: namely we assume that $\delta >0$ is so small that
\begin{equation}\label{del}
\delta^{\frac{2^*_s}{2}-1} < \frac{1}{(2^{2^*_{\mu,s}}D)^{\frac{1}{({2^*_s}/{2})-1}}}.
\end{equation}
We also fix $\eta \in \left( \delta^{\frac{2^*_s}{2}-1}, \frac{1}{(2^{2^*_{\mu,s}}D)^{\frac{1}{({2^*_s}/{2})-1}}}  \right)$. Since $D>1$ and $2^*_s/2 >1$, we get $\eta \in (0,1)$. Moreover,
\begin{equation}\label{reg13}
\delta^{\frac{2^*_s}{2}-1} \leq \eta \; \; \text{and}\; \; 2^{2^*_{\mu,s}}D \eta^{\frac{2^*_s}{2}-1} \leq 1.
\end{equation}
We claim that
\begin{equation}\label{reg14}
U_k \leq 2\delta \eta^{k+1}.
\end{equation}
The proof is by induction. First of all
\[ U_0 = |v^+|_{2^*_s} \leq |v|_{2^*_s} = \delta \leq 2\delta \]
which is \eqref{reg14} when $k=0$. Let us now suppose that \eqref{reg14}  holds true for $k$ and let us prove it for $k+1$. Using \eqref{reg12} and \eqref{reg13}, we get
\begin{equation*}
\begin{split}
U_{k+1} &\leq  D^{k+1}( U_k^{2^*_{\mu,s}}+ U_k^{\frac{2^*_s}{2}}) \leq 2^{2^*_{\mu,s}+1}D^{k+1} (\delta \eta^{k+1})^{\frac{2^*_s}{2}}\\
& \leq 2\delta (2^{2^*_{\mu,s}}D\eta^{\frac{2^*_s}{2}-1})^{k+1} \delta^{\frac{2^*_s}{2}-1}\eta^k  \leq 2\delta \eta^{k+1}.
\end{split}
\end{equation*}
This proves our claim \eqref{reg14}. Then using $\eta \in (0,1)$ and \eqref{reg14}, we conclude that
\[\lim_{k \rightarrow +\infty} U_k=0.\]
Hence, by \eqref{reg3}, $(v-1)^+ =0$ a.e. in $\Om$, that is $v \leq 1$ a.e. in $\Om$. Therefore, $u \leq c$ a.e. in $\Om$ which implies $|u|_{\infty} \leq c$. This completes the proof.\QED
\end{proof}

\begin{Theorem}
Let $u$ be a positive solution of $(P_{\la})$. Then there exist $\alpha \in (0,s]$ such that $u \in C_{loc}^{\alpha}(\Om)$..
\end{Theorem}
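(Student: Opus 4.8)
The plan is to combine Theorem~\ref{thrm4} with the interior regularity theory for the fractional Laplacian. Since $u$ is a positive solution of $(P_\la)$ it is in particular a nonnegative weak solution, so Theorem~\ref{thrm4} gives $u\in L^\infty(\Om)$; extending $u$ by zero outside $\Om$ we also have $u\in L^\infty(\mb R^n)$, which will be needed in order to control the nonlocal tail of $(-\De)^s u$.

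Next I would check that the right hand side of the equation,
\[
h(x):=\left(\int_{\Om}\frac{|u(y)|^{2^*_{\mu,s}}}{|x-y|^{\mu}}~\mathrm{d}y\right)|u(x)|^{2^*_{\mu,s}-2}u(x)+\la\,u(x),
\]
belongs to $L^\infty(\Om)$. For the potential factor, since $0<\mu<n$ and $\Om$ is bounded one has $\sup_{x\in\Om}\int_{\Om}|x-y|^{-\mu}~\mathrm{d}y\le\int_{B_R}|z|^{-\mu}~\mathrm{d}z<\infty$ with $R=\mathrm{diam}(\Om)$, so $\int_{\Om}|u(y)|^{2^*_{\mu,s}}|x-y|^{-\mu}~\mathrm{d}y$ is bounded by a constant (depending on $n,\mu,\Om$) times $|u|_\infty^{2^*_{\mu,s}}$. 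For the power factor, note $2^*_{\mu,s}=(2n-\mu)/(n-2s)>1$, so that $|u|^{2^*_{\mu,s}-2}u=u^{2^*_{\mu,s}-1}$ (as $u\ge 0$) with positive exponent $2^*_{\mu,s}-1=(n+2s-\mu)/(n-2s)$, which stays bounded when $u$ does. Together with $\la u\in L^\infty(\Om)$ this yields $h\in L^\infty(\Om)$.

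Therefore $u\in X_0$ is a weak solution of the linear fractional Dirichlet problem $(-\De)^s u=h$ in $\Om$, $u=0$ in $\mb R^n\setminus\Om$, with $h\in L^\infty(\Om)$ and $u\in L^\infty(\mb R^n)$. I would then invoke the interior H\"older estimates for the fractional Laplacian (see e.g.\ \cite{bn-serv} and the references therein): every such $u$ lies in $C^{\alpha}_{loc}(\Om)$ for all $\alpha<\min\{2s,1\}$. Since $s\in(0,1)$ one has $s<\min\{2s,1\}$, so taking $\alpha=s$ (or any $\alpha\in(0,s]$) gives $u\in C^{\alpha}_{loc}(\Om)$, which is the assertion.

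The main point (and the only mildly delicate one) is verifying that the doubly nonlocal term does not destroy the boundedness of the right hand side; once $u\in L^\infty$ is available this is elementary, resting on the integrability of $|x-y|^{-\mu}$ on bounded sets (i.e.\ $\mu<n$) together with $2^*_{\mu,s}>1$, after which the conclusion follows from standard elliptic regularity for $(-\De)^s$. If one wanted an exponent close to $\min\{2s,1\}$ one could additionally observe that $h$ is itself H\"older continuous (the Riesz potential of an $L^\infty$ density being $C^{0,\min\{n-\mu,1\}}$) and bootstrap using Schauder type estimates for $(-\De)^s$, but this refinement is not needed for $\alpha\in(0,s]$.
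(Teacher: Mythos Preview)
Your proposal is correct and follows essentially the same route as the paper: use Theorem~\ref{thrm4} to get $u\in L^\infty(\Om)$, observe that the right hand side $h$ is then bounded (you are more explicit here, noting $\sup_{x\in\Om}\int_{\Om}|x-y|^{-\mu}\mathrm{d}y<\infty$ since $\mu<n$ and $\Om$ is bounded), and finish by quoting interior H\"older regularity for $(-\De)^s u=h$ with $h\in L^\infty$. The only minor differences are that the paper phrases the last step as $|(-\De)^s u|\leq C$ weakly on $\Om'\Subset\Om$ and appeals to Theorem~4.4 and Corollary~5.5 of \cite{Asm} (the fractional $p$-Laplacian regularity theory), yielding some $\alpha\in(0,s]$, whereas you invoke the linear fractional Laplacian estimates to get any $\alpha<\min\{2s,1\}$; your citation of \cite{bn-serv} is not the most direct source for that estimate, but the result itself is standard.
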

\begin{proof}
Let $\Om^{\prime} \in \Om$. Then using above regularity result, for any $\psi \in C_{c}^{\infty}(\Om)$ we obtain
\begin{equation*}
\int_{\Om^\prime}\int_{\Om^\prime}\frac{|v(y)|^{2^*_{\mu,s}}|v(x)|^{2^*_{\mu,s}-2}v(x)\psi(x)}{|x-y|^{\mu}}~\mathrm{d}y\mathrm{d}x+ \la \int_{\Om^\prime}v\psi ~dx\leq C \int_{\Om^{\prime}}\psi dx
\end{equation*}
for some constant $C>0$, since $u \in L^{\infty}(\Om)$. Thus we have $|(-\De_{p})^su|\leq C$ weakly on $\Om^{\prime}$. So, using theorem 4.4 of \cite{Asm} and applying a covering argument on inequality in corollary 5.5 of \cite{Asm}, we can prove that there exist $\alpha \in (0,s] $ such that $u \in C^{\alpha}(\Om^{\prime})$, for all $\Om^{\prime} \Subset \Om$. Therefore, $u \in C_{loc}^{\alpha}(\Om)$. \QED
\end{proof}

\section{Nonexistence result}
In this section, we prove a non-existence result for $\la \le 0$ when $\Omega$ is a star shaped domain. At first, we prove the Pohozaev type identity:

\begin{Proposition}\label{Poho}
If $n>2s$, $\la <0$, $\Om$ be bounded, $C^{1,1}$ domain and $u \in L^{\infty}(\Om)$ solves $(P_\la)$, then
\begin{equation*}
\begin{split}
&\frac{2s-n}{2}\int_{\Om}u(-\De)^su~\mathrm{d}x - \frac{\Gamma(1+s)^2}{2}\int_{\partial \Om}\left(\frac{u}{\delta^s}\right)^2(x.\nu)\mathrm{d}\sigma\\
 &=\frac{2n-\mu}{22^*_{\mu,s}}\int_{\Om}\int_{\Om}\frac{|u(x)|^{2^*_{\mu,s}}|u(y)|^{2^*_{\mu,s}}}{|x-y|^{\mu}}~\mathrm{d}x\mathrm{d}y+ \frac{\la}{n} \int_{\Om}|u|^2\mathrm{d}x,
 \end{split}
 \end{equation*}
where $\delta(x)=  \text{dist }(x,\partial \Om)$ and $\nu$ denotes the unit outward normal to $\partial \Om$ at $x$ and $\Gamma$ is the Gamma function.
\end{Proposition}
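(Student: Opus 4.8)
\noi The plan is to combine the integration-by-parts form of the fractional Pohozaev identity of Ros-Oton and Serra for $(-\De)^s$ with a direct evaluation of $\int_\Om (x\cdot\na u)(-\De)^su\,\mathrm{d}x$ obtained by substituting the equation $(P_\la)$. Write $\Phi(x):=\int_\Om\frac{|u(y)|^{2^*_{\mu,s}}}{|x-y|^{\mu}}\,\mathrm{d}y$; since $u\in L^\infty(\Om)$, $|\Om|<\infty$ and $\mu<n$ (so $\int_{\Om}|x-y|^{-\mu}\,\mathrm{d}y$ is bounded uniformly in $x$), one gets $\Phi\in L^\infty(\Om)$, hence the right-hand side of $(P_\la)$ lies in $L^\infty(\Om)$. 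On the bounded $C^{1,1}$ domain $\Om$ this makes the Ros-Oton--Serra boundary regularity applicable: $u/\de^s\in C(\ov{\Om})$, the trace $(u/\de^s)|_{\pa\Om}$ is well defined, and
\[
\int_{\Om}(x\cdot\na u)(-\De)^su\,\mathrm{d}x=\frac{2s-n}{2}\int_{\Om}u(-\De)^su\,\mathrm{d}x-\frac{\Ga(1+s)^2}{2}\int_{\pa\Om}\Big(\frac{u}{\de^s}\Big)^2(x\cdot\nu)\,\mathrm{d}\sigma .
\]
It then remains only to compute the left-hand side from $(P_\la)$.

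Using $(P_\la)$ and $|u|^{2^*_{\mu,s}-2}u\,\na u=\tfrac1{2^*_{\mu,s}}\na(|u|^{2^*_{\mu,s}})$,
\[
\int_{\Om}(x\cdot\na u)(-\De)^su\,\mathrm{d}x=\frac1{2^*_{\mu,s}}\int_{\Om}\Phi(x)\,x\cdot\na\big(|u|^{2^*_{\mu,s}}\big)\,\mathrm{d}x+\frac{\la}{2}\int_{\Om}x\cdot\na(u^2)\,\mathrm{d}x .
\]
The second integral equals $-\tfrac{\la n}{2}\int_\Om u^2\,\mathrm{d}x$ after one integration by parts, the boundary term vanishing since $u\in C(\ov{\Om})$ with $u=0$ on $\pa\Om$. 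Integrating the first integral by parts once more (its boundary term drops since $|u|^{2^*_{\mu,s}}=0$ on $\pa\Om$) gives
\[
\int_{\Om}\Phi\,x\cdot\na\big(|u|^{2^*_{\mu,s}}\big)\,\mathrm{d}x=-n\int_{\Om}\Phi\,|u|^{2^*_{\mu,s}}\,\mathrm{d}x-\int_{\Om}|u(x)|^{2^*_{\mu,s}}\big(x\cdot\na_x\Phi(x)\big)\,\mathrm{d}x,
\]
where $\int_\Om\Phi\,|u|^{2^*_{\mu,s}}\,\mathrm{d}x=\int_\Om\int_\Om\frac{|u(x)|^{2^*_{\mu,s}}|u(y)|^{2^*_{\mu,s}}}{|x-y|^{\mu}}\,\mathrm{d}x\,\mathrm{d}y=:J$.

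For the remaining term one uses $x\cdot\na_x|x-y|^{-\mu}=-\mu\,(x\cdot(x-y))\,|x-y|^{-\mu-2}$, so that $\int_{\Om}|u(x)|^{2^*_{\mu,s}}(x\cdot\na_x\Phi(x))\,\mathrm{d}x=-\mu\int_\Om\int_\Om|u(x)|^{2^*_{\mu,s}}|u(y)|^{2^*_{\mu,s}}\frac{x\cdot(x-y)}{|x-y|^{\mu+2}}\,\mathrm{d}x\,\mathrm{d}y$; symmetrising this double integral under $x\leftrightarrow y$ (the weight $|u(x)|^{2^*_{\mu,s}}|u(y)|^{2^*_{\mu,s}}$ and $|x-y|^{-\mu-2}$ are symmetric, while $x\cdot(x-y)$ becomes $-\,y\cdot(x-y)$) turns $x\cdot(x-y)$ into $\tfrac12(x-y)\cdot(x-y)=\tfrac12|x-y|^2$, whence this term equals $-\tfrac{\mu}{2}J$. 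Collecting,
\[
\int_{\Om}(x\cdot\na u)(-\De)^su\,\mathrm{d}x=-\frac1{2^*_{\mu,s}}\Big(n-\frac{\mu}{2}\Big)J-\frac{\la n}{2}\int_{\Om}u^2\,\mathrm{d}x=-\frac{2n-\mu}{2\cdot 2^*_{\mu,s}}J-\frac{\la n}{2}\int_{\Om}u^2\,\mathrm{d}x,
\]
and inserting this into the Ros-Oton--Serra identity and rearranging produces the stated Pohozaev identity. Note moreover that, since $2^*_{\mu,s}=(2n-\mu)/(n-2s)$, one has $\tfrac{2n-\mu}{2\cdot 2^*_{\mu,s}}=\tfrac{n-2s}{2}$, so if one also uses $\int_\Om u(-\De)^su\,\mathrm{d}x=J+\la\int_\Om u^2\,\mathrm{d}x$ (from $(P_\la)$ tested with $\varphi=u$) the $J$-terms combine and the identity simplifies to $\tfrac{\Ga(1+s)^2}{2}\int_{\pa\Om}(u/\de^s)^2(x\cdot\nu)\,\mathrm{d}\sigma=s\la\int_\Om u^2\,\mathrm{d}x$, the form actually needed for the nonexistence result.

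The main obstacle is making the step with $\na_x\Phi$ rigorous: when $\mu+1\ge n$ the kernel $|x-y|^{-\mu-1}$ is not locally integrable, so $\Phi$ cannot be differentiated under the integral sign directly. The standard remedy, as in the local Choquard case of \cite{myang}, is to replace $|x-y|^{-\mu}$ by the smoothed kernel $(|x-y|^2+\tau^2)^{-\mu/2}$, perform all the integrations by parts and the $x\leftrightarrow y$ symmetrisation for the regularised quantities (for which the symmetrised kernel is $\tfrac12(|x-y|^2+\tau^2)^{-\mu/2}$ up to a lower-order remainder), and then pass to the limit $\tau\to0$ using $u\in L^\infty(\Om)$, $|\Om|<\infty$, dominated convergence and the Hardy--Littlewood--Sobolev inequality. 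One must likewise invoke $u/\de^s\in C(\ov{\Om})$ on the $C^{1,1}$ domain $\Om$ both to give meaning to the boundary integral and to discard the boundary terms from the two integrations by parts in $\Om$; the hypothesis $\la<0$ plays no role in the identity itself and is only needed in the nonexistence argument that this proposition feeds.
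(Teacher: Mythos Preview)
Your argument is essentially the same as the paper's: multiply $(P_\la)$ by $x\cdot\na u$, apply the Ros-Oton--Serra fractional Pohozaev identity to the left-hand side, and reduce the Choquard contribution via integration by parts together with the $x\leftrightarrow y$ symmetrisation. Your version is in fact somewhat tidier---you pass through $\na(|u|^{2^*_{\mu,s}})$ rather than differentiating the product directly---and you are more careful than the paper on two points: you justify that $\Phi\in L^\infty(\Om)$ so that the Ros-Oton--Serra regularity applies, and you flag the need for a smoothed kernel $(|x-y|^2+\tau^2)^{-\mu/2}$ when $\mu+1\ge n$ to make the differentiation of $\Phi$ rigorous, a step the paper performs only formally. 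Your closing remarks (that $\la<0$ is irrelevant for the identity itself, and the simplification to $\tfrac{\Ga(1+s)^2}{2}\int_{\pa\Om}(u/\de^s)^2(x\cdot\nu)\,\mathrm{d}\sigma=s\la\int_\Om u^2$) are correct and useful.
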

\begin{proof}
Since $u$ solves $(P_\la)$, u satisfies the problem
\[(-\De)^su = \left( \int_{\Om}\frac{|u(y)|^{2^*_{\mu,s}}}{|x-y|^{\mu}}\mathrm{d}y \right)|u|^{2^*_{\mu,s}-2}u +\la u \; \text{ in } \Om.\]
Multiplying both sides of the above equation by $(x. \nabla u)$ and integrating, we get
\begin{equation}\label{poho1}
\int_{\Om}(x. \nabla u)(-\De)^su~ \mathrm{d}x= \int_{\Om}(x.\nabla u)\left(\int_{\Om}\frac{|u(y)|^{2^*_{\mu,s}}}{|x-y|^{\mu}}\mathrm{d}y \right)|u|^{2^*_{\mu,s}-1}\mathrm{d}x+ \frac{\la n}{2}\int_{\Om}|u|^2\mathrm{d}x.
\end{equation}
Using Theorem $1.4$ and $1.6$ of \cite{fracpoho} (since $u \in L^{\infty}(\Om)$, $f(u) \in C^{0,1}(\bar \Om)$, where $f(u)= \left( \int_{\Om}\frac{|u|^{2^*_{\mu,s}}}{|x-y|^{\mu}}\mathrm{d}y \right)|u|^{2^*_{\mu,s}-2}u +\la u$), we get
\begin{equation*}
\int_{\Om}(x. \nabla u)(-\De)^su~ \mathrm{d}x= \frac{2s-n}{2}\int_{\Om}u(-\De)^su~\mathrm{d}x - \frac{\Gamma(1+s)^2}{2}\int_{\partial \Om}\left(\frac{u}{\delta^s}\right)^2(x.\nu)\mathrm{d}\sigma.
\end{equation*}
Now, consider the term
\begin{equation*}
\begin{split}
 &\int_{\Om}(x.\nabla u)\left(\int_{\Om}\frac{|u(y)|^{2^*_{\mu,s}}}{|x-y|^{\mu}}\mathrm{d}y \right)|u|^{2^*_{\mu,s}-1}\mathrm{d}x\\
 & = -\int_{\Om}u(x) \left(n \left(\int_{\Om}\frac{|u(y)|^{2^*_{\mu,s}}}{|x-y|^{\mu}}\mathrm{d}y \right)|u|^{2^*_{\mu,s}-1} + (2^*_{\mu,s}-1)|u(x)|^{2^*_{\mu,s}-2}x.\nabla u(x)\int_{\Om}\frac{|u(y)|^{2^*_{\mu,s}}}{|x-y|^{\mu}}\mathrm{d}y\right.\\
 & \quad \quad +\left. |u(x)|^{2^*_{\mu,s}-1}\int_{\Om}(-\mu)x.(x-y)\frac{|u(y)|^{2^*_{\mu,s}}}{|x-y|^{\mu+2}}\mathrm{d}y \right) \mathrm{d}x\\
 & = - n \int_{\Om}\int_{\Om}\frac{|u(x)|^{2^*_{\mu,s}}|u(y)|^{2^*_{\mu,s}}}{|x-y|^{\mu}}~\mathrm{d}x\mathrm{d}y - (2^*_{\mu,s}-1)\int_{\Om}x.\nabla u(x)\int_{\Om}\frac{|u(x)|^{2^*_{\mu,s}}}{|x-y|^{\mu}}\mathrm{d}y|u(x)|^{2^*_{\mu,s}-1}\mathrm{d}x\\
 & \quad \quad+ \mu \int_{\Om}\int_{\Om}x.(x-y) \frac{|u(y)|^{2^*_{\mu,s}}}{|x-y|^{\mu+2}}|u(x)|^{2^*_{\mu,s}}\mathrm{d}y\mathrm{d}x.
 \end{split}
\end{equation*}
This gives
\begin{equation*}
\begin{split}
& 2^*_{\mu,s}\int_{\Om}x.\nabla u(x)\int_{\Om}\frac{|u(y)|^{2^*_{\mu,s}}}{|x-y|^{\mu}}\mathrm{d}y|u(x)|^{2^*_{\mu,s}-1}\mathrm{d}x\\
& =\quad  - n \int_{\Om}\int_{\Om}\frac{|u(x)|^{2^*_{\mu,s}}|u(y)|^{2^*_{\mu,s}}}{|x-y|^{\mu}}~\mathrm{d}x\mathrm{d}y + \mu \int_{\Om}\int_{\Om}x.(x-y) \frac{|u(y)|^{2^*_{\mu,s}}}{|x-y|^{\mu+2}}|u(x)|^{2^*_{\mu,s}}\mathrm{d}y\mathrm{d}x,
\end{split}
\end{equation*}
and similarly,
\begin{equation*}
\begin{split}
& 2^*_{\mu,s}\int_{\Om}y.\nabla u(x)\int_{\Om}\frac{|u(x)|^{2^*_{\mu,s}}}{|x-y|^{\mu}}\mathrm{d}x|u(y)|^{2^*_{\mu,s}-1}\mathrm{d}y\\
& =  - n \int_{\Om}\int_{\Om}\frac{|u(y)|^{2^*_{\mu,s}}|u(x)|^{2^*_{\mu,s}}}{|x-y|^{\mu}}~\mathrm{d}y\mathrm{d}x + \mu \int_{\Om}\int_{\Om}y.(y-x) \frac{|u(x)|^{2^*_{\mu,s}}}{|x-y|^{\mu+2}}|u(y)|^{2^*_{\mu,s}}\mathrm{d}x\mathrm{d}y.
\end{split}
\end{equation*}
Thus, we have
\begin{equation*}
\int_{\Om}(x.\nabla u(x))\left(\int_{\Om}\frac{|u(y)|^{2^*_{\mu,s}}}{|x-y|^{\mu}}\mathrm{d}y\right)|u(x)|^{2^*_{\mu,s}-1}\mathrm{d}x=
\frac{\mu -2n}{2 2^*_{\mu,s}}\int_{\Om}\frac{|u(x)|^{2^*_{\mu,s}}|u(x)|^{2^*_{\mu,s}}}{|x-y|^{\mu}}\mathrm{d}x\mathrm{d}y.
\end{equation*}
Since \[\int_{\Om}(x.\nabla u)u \mathrm{d}x= -\frac{n}{2}\int_{\Om}u^2 \mathrm{d}x,\] using \eqref{poho1}, the result follows. \hfill{\QED}
\end{proof}

\noi \textbf{Proof of Theorem \ref{thrm3}:} Let $u \geq 0 $ be a nontrivial solution of $(P_{\la})$, then by Theorem \ref{Linftes}, $u \in L^{\infty}(\Om)$. Therefore, we have
\[\|u\|^2 = \int_{\Om}\int_{\Om}\frac{|u(x)|^{2^*_{\mu,s}}|u(y)|^{2^*_{\mu,s}}}{|x-y|^{\mu}}~\mathrm{d}x\mathrm{d}y+ \la\int_{\Om}u^2\mathrm{d}x. \]
Using Proposition \ref{Poho}, we get
\[  \frac{\Gamma(1+s)^2}{2}\int_{\partial \Om}\left(\frac{u}{\delta^s}\right)^2(x.\nu)\mathrm{d}\sigma= \la\int_{\Om}u^2\mathrm{d}x.\]
But, since $\Om$ is star shaped with respect to origin in $\mb R^n$, so $x. \nu >0$ . From above equation and $\la <0$, we have $u \equiv 0$, which is a contradiction. This completes the proof. \hfill{\QED}

 \linespread{0.5}

\end{document}